\numberwithin{equation}{section}
\newtheorem{thm}{Theorem}[section]
\newtheorem{lemma}[thm]{Lemma}
\newtheorem{cor}[thm]{Corollary}
\newtheorem{prop}[thm]{Proposition}
\theoremstyle{definition}
\theoremstyle{remark}
\newtheorem{remark}[thm]{Remark}
\newcommand{\sgn}{{\rm sgn}}
\renewcommand{\S}{\mathfrak S}
\newcommand{\s}{\sigma}
\newcommand\C{{\mathbb{C}}}
\newcommand\Z{{\mathbb{Z}}}
\newcommand\F{{\mathbb{F}}}
\newcommand\PP{{\mathbb{P}}}
\newcommand\wh{\widehat}
\newcommand\bq{\begin{equation}}
\newcommand\eq{\end{equation}}
\newcommand\beq{\begin{eqnarray*}}
\newcommand\eeq{\end{eqnarray*}}
\newcommand\ben{\begin{enumerate}}
\newcommand\een{\end{enumerate}}
\newcommand\bit{\begin{itemize}}
\newcommand\eit{\end{itemize}}
\newcommand\des{{\rm des}}
\newcommand\exc{{\rm exc}}
\newcommand\maj{{\rm maj}}
\newcommand\comaj{{\rm comaj}}
\newcommand\sg{{\mathfrak S}}
\newcommand\Exd{{\rm DEX}}
\newcommand\fix{{\rm fix}}
\newcommand\ch{{\rm ch}}
\newcommand\x{{\mathbf x}}
\newcommand\Exp{{\rm Exp}}
\newcommand\bnd{{\rm bnd}}
\def\wh{\widehat}
\def\hz{\hat 0}
\def\ov{\overline}
\def\ttn{T_{t,n}}
\def\rh{\tilde{H}}
\def\zz{{\mathbb Z}}
\def\nn{{\mathbb N}}
\def\pp{{\mathbb P}}
\def\xx{{\mathbf x}}
\def\si{\sigma}
\def\fff{{\rm f}}
\begin{document}

\title[Poset homology and $q$-Eulerian polynomials]
{Poset homology of Rees products, and $q$-Eulerian polynomials}
\author[Shareshian]{John Shareshian$^1$}
\address{Department of Mathematics, Washington University, St. Louis, MO 63130}
\thanks{$^{1}$Supported in part by NSF Grants
 DMS 0300483 and DMS 0604233, and the Mittag-Leffler Institute}
\email{shareshi@math.wustl.edu}

\author[Wachs]{Michelle L. Wachs$^2$}
\address{Department of Mathematics, University of Miami, Coral Gables, FL 33124}
\email{wachs@math.miami.edu}
\thanks{$^{2}$Supported in part by NSF Grants
DMS 0302310 and DMS 0604562, and the Mittag-Leffler Institute}

\subjclass[2000]{05A30, 05E05, 05E25}

\date{October 30, 2008}

\dedicatory{Dedicated to Anders  Bj\"orner on the occasion of his 60th birthday}

\begin{abstract} The notion of  Rees product of posets was introduced by
Bj\"orner and Welker in \cite{bw}, where they study connections between poset topology and commutative algebra.  Bj\"orner and Welker  conjectured and Jonsson \cite{jo} proved  that the dimension of the top homology of the Rees product of the truncated Boolean algebra $B_n \setminus \{0\}$ and the $n$-chain $C_n$ is equal to the number of derangements in  the symmetric group $\mathfrak S_n$.  Here we prove a refinement of this result, which involves the Eulerian numbers, and a   $q$-analog of both the refinement and the original conjecture, which comes from replacing the Boolean algebra by the lattice of subspaces of the $n$-dimensional vector space over the $q$ element field, and  involves the $(\maj,\exc)$-$q$-Eulerian polynomials studied in  previous papers of the authors \cite{sw,ShWa}.  Equivariant versions of the refinement and the original conjecture are also proved, as are type BC versions (in the sense of Coxeter groups) of the original conjecture and its $q$-analog.\end{abstract}

\maketitle

\vbox{
\tableofcontents
}

\section{Introduction and statement of main results}

In their study of connections between topology of order complexes and commutative algebra in \cite{bw}, Bj\"orner and Welker introduced the notion of Rees product of posets, which is a combinatorial analog of the Rees construction for semigroup algebras.  They stated a  conjecture that the M\"obius invariant of a certain family of Rees product posets is given by the derangement numbers. Our investigation of this conjecture led to a surprising new $q$-analog of the classical formula for the exponential generating function of the Eulerian polynomials, which we proved in \cite{ShWa} by establishing certain quasisymmetric function identities.  In this paper, we return to the original conjecture (which was first proved by Jonsson \cite{jo}).  We prove a refinement  of the conjecture, which involves Eulerian polynomials, and we prove a q-analog and  equivariant version of both the conjecture and its refinement, thereby connecting poset topology to the subjects studied in our earlier paper.

  The terminology used in this paper is explained briefly here and more fully in  Section~\ref{prelimsec}.  All posets are assumed to be finite.

Given ranked posets $P,Q$ with respective rank functions $r_P,r_Q$, the {\it Rees product} $P \ast Q$ is the poset whose underlying set is
\[
\{(p,q) \in P \times Q:r_P(p) \geq r_Q(q) \},
\]
with order relation given by $(p_1,q_1) \leq (p_2,q_2)$ if and only if all of the conditions
\begin{itemize}
\item $p_1 \leq_P p_2$,
\item $q_1 \leq_Q q_2$, and
\item $r_P(p_1)-r_P(p_2) \geq r_Q(q_1)-r_Q(q_2)$
\end{itemize}
hold.  In other words, $(p_2,q_2)$ covers $(p_1,q_1)$ in $P \ast Q$ if and only if $p_2$ covers $p_1$  in $P$ and either $q_2=q_1$ or $q_2$ covers $q_1$ in $Q$. 

\vspace{.2in}\begin{center}\includegraphics[width=3.5in]{poset2.eps}\end{center}
\centerline{{\bf Figure 1.}  $(B_3\setminus\{\emptyset\} ) * C_3$}
\vspace{.2in}

Let $B_n$ be the Boolean algebra on the set $[n]:=\{1,\ldots,n\}$ and   $C_n$ be the chain $\{0<1<\ldots<n-1\}$.  This paper is concerned with the Rees product $(B_n\setminus\{\emptyset\} ) * C_n$ and various analogs.   The Hasse diagram of  $(B_3\setminus\{\emptyset\} ) * C_3$  is given in Figure 1 (the pair $(S,j)$ is written as $S^j$ with set brackets omitted).

Recall that for a poset $P$, the {\it order complex} $\Delta P$ is the abstract simplicial complex whose vertices are the elements of $P$ and whose $k$-simplices are totally ordered subsets of size $k+1$ from $P$.  The (reduced) homology of $P$ is given by $\rh_k(P):= \rh_k(\Delta P;\C)$. Bj\"orner and Welker \cite{bw} prove that if $P$ is the Rees product of Cohen-Macaulay posets then $P$ is Cohen-Macaulay, which means that homology of each interval and principal lower and upper order ideal of $P$ is concentrated in its top dimension.    Hence
$(B_n \setminus \{\emptyset\}) * C_n$ is Cohen-Macaulay, since both $ B_n \setminus \{\emptyset\}$ and $C_n$ are.

For any poset $P$ with a minimal element $\hat 0$, let $P^-$ denote the truncated poset $P \setminus \{\hat 0\}$.    The theorem of Jonsson as conjectured by Bj\"orner and Welker in \cite{bw} is as follows.

\begin{thm}[Jonsson \cite{jo}] \label{jj}
We have
\begin{equation*}
\dim \rh_{n-1}(B_n^- \ast C_n)=d_n,
\end{equation*}
where $d_n$ is the number of derangements (fixed-point-free elements) in the symmetric group $\mathfrak S_n$.
\end{thm}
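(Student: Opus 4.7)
My plan is to prove the theorem by establishing lexicographic shellability of an augmentation of $B_n^-\ast C_n$ and then matching the decreasing maximal chains bijectively with derangements. Since $B_n^-$ and $C_n$ are each shellable and hence Cohen--Macaulay, the Bj\"orner--Welker theorem cited in the excerpt gives that $B_n^-\ast C_n$ is Cohen--Macaulay. Consequently $\tilde H_i(B_n^-\ast C_n)=0$ for $i<n-1$, and the Euler--Poincar\'e formula yields
\[
\dim\tilde H_{n-1}(B_n^-\ast C_n) \;=\; |\mu_{\hat P}(\hat 0,\hat 1)|,
\]
where $\hat P$ denotes $B_n^-\ast C_n$ with a $\hat 0$ and a $\hat 1$ adjoined. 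The task reduces to evaluating this M\"obius number.

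To set up the shelling, I will define an edge labeling of $\hat P$ using a linearly ordered alphabet of barred and unbarred copies of $[n]$: a ``horizontal'' cover $(S,j)\lessdot(S\cup\{a\},j)$ receives the unbarred label $a$, while a ``vertical'' cover $(S,j)\lessdot(S\cup\{a\},j+1)$ receives the barred label $\bar a$. Covers from $\hat 0$ inherit the label of the newly introduced element, and covers into $\hat 1$ receive auxiliary labels chosen to fit the EL condition. After choosing an appropriate linear order on the alphabet (barred vs.\ unbarred, plus the boundary labels), I will verify the EL condition on every interval by showing that greedily taking the smallest available label at each step produces a unique strictly increasing maximal chain, which is also lexicographically least.

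By Bj\"orner--Wachs lex-shellability theory, $\dim\tilde H_{n-1}(B_n^-\ast C_n)$ then equals the number of strictly decreasing maximal chains of $[\hat 0,\hat 1]$. Any such chain is naturally encoded by a pair $(w,J)$, where $w=w_1\cdots w_n\in\mathfrak S_n$ records the order in which elements of $[n]$ enter $S$ and $J\subseteq\{2,\ldots,n\}$ records the positions where the $C_n$-coordinate jumps. The decreasing-label condition translates into a tight compatibility between the descent pattern of $w$ and the marks in $J$, and the combinatorial core of the proof is to exhibit an explicit bijection between the pairs $(w,J)$ satisfying this condition and the derangements in $\mathfrak S_n$. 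Ideally the bijection refines cleanly: the $C_n$-coordinate at the top of the chain should correspond to the excedance number of the derangement, which would simultaneously furnish the Eulerian refinement the authors advertise in the abstract.

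The main obstacle will be pinning down the precise linear order on the alphabet, together with the auxiliary labels at $\hat 0$ and $\hat 1$, so that two requirements hold jointly: (i) the labeling is EL on every subinterval---and the subintervals $[(S_1,j_1),(S_2,j_2)]$ carry an additional ``$|T|\le k+(N-M)$'' constraint not present in the full Rees product, so they are not themselves smaller Rees products and the induction on rank needs tweaking---and (ii) the decreasing chains land on the nose in bijection with derangements, graded by excedance. Reconciling (i) and (ii) is the technical heart of the proof.
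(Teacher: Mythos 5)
Your proposal takes a genuinely different route from the one given in this paper, and it also does not actually complete the argument.

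This paper does \emph{not} prove Theorem~\ref{jj} by a shelling. Instead it proves the refinement Theorem~\ref{bncn} by combining two ingredients and then derives Theorem~\ref{jj} as a consequence via M\"obius inversion and the inclusion--exclusion formula for $d_n$. The two ingredients are: (a) Theorem~\ref{treeeq123}, a computation of $\dim\tilde H_{n-2}\big((B_n\ast T_{t,n})^-\big)$ for the complete $t$-ary tree $T_{t,n}$, carried out not by shelling but by applying the M\"obius recursion to a ``uniform'' sequence of posets (Proposition~\ref{propuniform}), turning the recurrence into a generating function identity, and matching against the exponential generating function identity from Corollary~\ref{comajcor}; and (b) the Tree Lemma (Theorem~\ref{tree}), which shows that $\sum_j \mu(\wh{I_{j-1}(P)})t^j = -\mu\big((P^\ast\ast T_{t,n})^+\big)$, so that the tree computation in (a), together with self-duality of $B_n$, yields the dimensions of each $\tilde H_{n-2}(I_j(B_n))$. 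The reason the authors route the argument through trees is precisely to avoid the subinterval problem you flag: intervals in a Rees product are \emph{not} themselves Rees products, and working with the tree converts the top-interval M\"obius calculation into a clean recursion that factors through generating functions and ties directly into the Eulerian quasisymmetric function machinery, simultaneously yielding the $q$-analog and equivariant versions with little extra work.

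Your lexicographic-shellability plan is the \emph{other} proof the authors allude to, whose details appear in the companion paper \cite{ShWa2}, not here. In principle it works, but your write-up leaves the decisive steps unexecuted: you have not specified the linear order on the barred/unbarred alphabet or the boundary labels at $\hat 0$ and $\hat 1$, you have not verified the EL condition on the restricted subintervals (where, as you correctly note, the extra constraint $r_P(p_1)-r_P(p_2)\ge r_Q(q_1)-r_Q(q_2)$ means the interval is not a smaller Rees product and a naive rank induction fails), and you have not produced the bijection between decreasing maximal chains and derangements, let alone the excedance-graded refinement. You explicitly defer all of this as ``the technical heart of the proof,'' so as it stands the proposal is a plan, not a proof. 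To close the gap you would need to actually exhibit the labeling, prove EL on every interval of $\widehat{B_n^-\ast C_n}$, and carry out the derangement bijection; alternatively, you could avoid the interval pathology entirely by following the paper's tree route, in which case the only combinatorics needed is the generating-function computation and the Tree Lemma's anti-isomorphism $\psi_i:(a,x_j)\mapsto(a,x_{i-j})$.
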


Our refinement of Theorem \ref{jj} is Theorem~\ref{bncn} below.   Indeed, Theorem~ \ref{jj} follows immediately from Theorem \ref{bncn}, the Euler characteristic interpretation of the Mobius function, the recursive definition of the Mobius function, and the well-known formula
\begin{equation}
d_n=\sum_{m=0}^{n} (-1)^m{{n} \choose {m}}(n-m)! \, .
\end{equation} 

Let  $P$  be a ranked and bounded poset of length $n$ with   minimum element $\hat 0$ and  maximum element $\hat 1$.  The maximal elements of $P^-* C_n$ are of the form $(\hat 1,j)$, for $j=0\dots, n-1$.  Let  $I_j(P)$ denote the open principal order ideal generated by $(\hat 1,j)$.  If $P$ is Cohen-Macaulay then  the homology of the order complex of $I_j(P)$ is concentrated in dimension $n-2$.

\begin{thm}
For all $j =0,\dots, n-1$, we have 
\[
\dim \rh_{n-2}(I_j(B_n))=a_{n,j},
\]
where $a_{n,j}$ is the Eulerian number indexed by $n$ and $j$; that is $a_{n,j}$ is  the number of permutations in $\sg_n$ with $j$ descents, equivalently with $j$ excedances.
\label{bncn}
\end{thm}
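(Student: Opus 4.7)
The plan is to compute $\dim \rh_{n-2}(I_j(B_n))$ as a M\"obius function value, derive a recursion from the defining relation of $\mu$, and verify via exponential generating functions that this recursion is satisfied by the Eulerian polynomials. Cohen-Macaulayness of $B_n^- \ast C_n$, and hence of every lower interval, yields
\[
\dim \rh_{n-2}(I_j(B_n)) = (-1)^n \mu_{n,j},
\]
where $\mu_{n,j} := \mu(\hz, ([n], j))$ in the augmentation of $B_n^- \ast C_n$ by a new minimum $\hz$. The key structural observation is that the lower interval $[\hz, (S, k)]$ in this augmentation depends only on $m := |S|$ and $k$, so that $\mu(\hz, (S, k)) = \mu_{m, k}$.

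Applying $\sum_{y \le ([n], j)} \mu(\hz, y) = 0$ and stratifying by $(m, k) := (|S|, k)$ gives
\[
\sum_{\substack{1 \le m \le n,\; 0 \le k \le m - 1 \\ k \le j,\; m \le n - j + k}} \binom{n}{m}\, \mu_{m, k} \;=\; -1.
\]
Next, multiply by $t^j$, sum over $j = 0, \ldots, n - 1$, and swap the order of summation. For fixed $(m, k)$, the inner sum is geometric over $j \in [k, n - m + k]$, which after clearing the factor $(1 - t)$ produces the polynomial recursion
\[
\sum_{m = 1}^n \binom{n}{m}\bigl(1 - t^{n - m + 1}\bigr) P_m(t) \;=\; -(1 - t^n),
\]
where $P_n(t) := \sum_{j = 0}^{n - 1} t^j \mu_{n, j}$.

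Setting $F(z, t) := \sum_{n \ge 1} P_n(t)\, z^n / n!$ and recognizing the binomial convolution on the left, this recursion becomes the functional equation $F(z, t)\bigl(e^z - t\, e^{t z}\bigr) = e^{t z} - e^z$, which simplifies to $F(z, t) = (1 - e^{z(1-t)})/(e^{z(1-t)} - t)$. Comparing with the classical identity $\sum_{n \ge 0} A_n(t)\, z^n / n! = (1 - t)/(e^{z(t - 1)} - t)$ for the Eulerian polynomials $A_n(t) := \sum_j a_{n, j} t^j$ yields $F(z, t) = A(-z, t) - 1$, whence $P_n(t) = (-1)^n A_n(t)$ for all $n \ge 1$; extracting coefficients gives $\mu_{n, j} = (-1)^n a_{n, j}$ and the theorem follows.

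The delicate step is the double-sum interchange in the second paragraph: one must carefully verify that the joint constraints $k \le j$ and $m \le n - j + k$ combine to produce exactly the range $j \in [k, n - m + k]$, yielding the telescoping factor $1 - t^{n - m + 1}$. Once this is in place, the passage to the generating function and the match against the Eulerian EGF are routine.
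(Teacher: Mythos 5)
Your proof is correct, and it takes a genuinely different route from the paper's. The paper proceeds by first establishing a general ``Tree Lemma'' (Theorem~\ref{tree}), which uses an explicit antiisomorphism to convert the generating polynomial $\sum_j \mu(\wh{I_{j-1}(P)})t^j$ into the single M\"obius invariant $-\mu((P^\ast\ast T_{t,n})^+)$ of a Rees product with a complete $t$-ary tree; it then computes that invariant via a M\"obius recursion for ``uniform'' posets (Propositions~\ref{uniformtree}, \ref{propuniform}) and exponential generating functions. You skip the tree entirely: you apply the M\"obius recursion directly to the augmentation of $B_n^-\ast C_n$, exploit the fact that lower intervals $[\hz,(S,k)]$ depend only on $(|S|,k)$ (which is the same uniformity phenomenon the paper packages into Proposition~\ref{uniformtree}), multiply by $t^j$ and sum ``by hand'' to recover the geometric-series factor $1-t^{\,n-m+1}$ that the tree's $t^j$-many rank-$j$ nodes would otherwise supply automatically. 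Indeed, after substituting $\mu_n(t)=-tP_n(t)$ (which is exactly the content of the Tree Lemma for $B_n$) and $k\mapsto n-m$, the paper's recursion $\sum_k\binom{n}{k}[k+1]_t\mu_{n-k}(t)=-1$ becomes precisely your $\sum_m\binom{n}{m}(1-t^{\,n-m+1})P_m(t)=-(1-t^n)$, so the two approaches converge on the same functional equation and then solve it with the same classical Eulerian EGF. What the paper's heavier machinery buys is reusability: the Tree Lemma plus the uniform-sequence recursion are stated once and then applied uniformly to get the $q$-analog (Theorem~\ref{bncnq}, via $B_n(q)$ and $q$-exponentials), the equivariant version (Theorem~\ref{bncnsg}, via Sundaram's Lefschetz-character identity in place of the scalar M\"obius recursion), and the type BC analogs. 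Your argument is shorter and more self-contained for Theorem~\ref{bncn} alone, and it would extend to the $q$-analog with only notational changes, but the equivariant version would require you to redo the orbit bookkeeping that the tree poset handles structurally.

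One small expository point worth making explicit: when you write $\dim\rh_{n-2}(I_j(B_n))=(-1)^n\mu_{n,j}$, you should cite that $B_n^-\ast C_n$ is Cohen-Macaulay because it is a Rees product of Cohen-Macaulay posets (the Bj\"orner--Welker theorem), since Cohen-Macaulayness of the ambient poset is what guarantees vanishing of lower homology in every open interval, including $I_j(B_n)$. The interchange-of-summation step you flag as delicate is in fact fine: for fixed $(m,k)$ with $1\le m\le n$ and $0\le k\le m-1$, the constraints $k\le j\le n-m+k$ automatically lie inside $\{0,\dots,n-1\}$ because $k\ge 0$ and $n-m+k\le n-1$ follows from $k\le m-1$.
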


We have obtained two  different  proofs of Theorem \ref{bncn} both as applications of   general results on  Rees products that we derive. One of these proofs, which appears in \cite{ShWa2},  involves the theory of lexicographical shellability \cite{bj}.  The other, which is given in  Sections~\ref{treesec} and~\ref{treelemsec},
is based on the recursive definition of the M\"obius function applied to the  Rees product of $B_n$ with a poset whose Hasse diagram is a  tree.  This proof yields  a {\it $q$-analog} (Theorem~\ref{bncnq})  of Theorem~\ref{bncn}, in which the Boolean algebra $B_n$ is replaced by its $q$-analog, $B_n(q)$, the lattice of subspaces of the $n$-dimensional vector space $\F_q^n$ over the $q$ element field $\F_q$, and the Eulerian number $a_{n,j}$ is replaced by a  $q$-Eulerian number.  The proof also yields an $\mathfrak S_n$-equivariant version (Theorem~\ref{bncnsg}) of Theorem~\ref{bncn}.  The proofs of these results also appear in Sections~\ref{treesec} and~\ref{treelemsec}.
A $q$-analog and equivariant version of Theorem~\ref{jj} are derived as consequences in Section~\ref{corsec}.

Recall that the {\em major index}, $\maj(\s)$, of a permutation $\s \in \sg_n$ is  the sum of all the descents of $\s$, i.e.
$$\maj(\s) := \sum_{i: \s(i) > \s(i+1)} i,$$ and the {\em excedance number}, $\exc(\s)$,  is the number of excedances of $\s$, i.e., $$\exc(\s):= |\{i \in [n-1]: \s(i) > i\}|.$$
For $n \ge 1$, define the $q$-Eulerian polynomial $$ A^{\maj,\exc}_{n}(q,t) := \sum_{\s \in \sg_n} q^{\maj(\s)} t^{\exc(\s)}$$ and, let $A^{\maj,\exc}_{0}(q,t) = 1$.  For example, 
$$A_3^{\maj,\exc}(q,t) := 1 + (2q+q^2+q^3)t+q^2t^2.$$  For all $j$, the $q$-Eulerian number $a^{\maj,\exc}_{n,j}(q)$ is the coefficient of $t^j$ in $A_n^{\maj,\exc}(q,t)$.   The study of the $q$-Eulerian polynomials $ A^{\maj,\exc}_{n}(q,t)$ was initiated in our recent paper \cite{sw} and was subsequently further investigated in \cite{ShWa, FH,FH2,FH3,FH4}.  There are various other $q$-analogs of the Eulerian polynomials that had been extensively studied in the literature  prior to our paper; for a sample see \cite{bs,br,c,csz,foa3,fs2,fz,gar,gg,grem,gr,hag,rrw,ra,sk,st,st1,wa2}.  They involve different combinations of Mahonian and Eulerian permutation statistics, such as the major index and the descent number, the inversion index and the descent number, the inversion index and the excedance number. 

 Like $B_n^-*C_n$,  the $q$-analog
 $B_n(q)^- \ast C_n$ is
Cohen-Macaulay.  Hence  $I_{j}(B_n(q))$ has vanishing homology
below its top dimension $n-2$.  We  prove
the following $q$-analog of Theorem \ref{bncn}.

\begin{thm} \label{bncnq}
For all $j=0,1,\dots,n-1$,
\begin{equation} \label{bncnqgen}
\dim \rh_{n-2}(I_{j}(B_n(q)))= q^{{n\choose 2} + j} \, a^{\maj,\exc}_{n,j}(q^{-1}) .\end{equation}
\end{thm}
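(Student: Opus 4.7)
The plan is to prove Theorem~\ref{bncnq} by the same tree-based M\"obius function technique that will be used for Theorem~\ref{bncn}, now tracking $q$-weights coming from the subspace lattice $B_n(q)$. Since $B_n(q)^- * C_n$ is Cohen-Macaulay of length $n$, the reduced homology of $I_j(B_n(q))$ is concentrated in dimension $n-2$, so that
\[
\dim \rh_{n-2}(I_{j}(B_n(q))) = (-1)^n \mu_{\hat P}(\hat 0, (\hat 1, j)),
\]
where $\hat P$ denotes $B_n(q)^- * C_n$ with an adjoined minimum $\hat 0$. The task reduces to evaluating this M\"obius function.

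First, I would establish a general lemma expressing $\mu(\hat 0, (\hat 1, t))$ in $\widehat{Q * T}$ for $Q$ a ranked bounded poset of length $n$ and $T$ a poset whose Hasse diagram is a rooted tree of length at most $n$, rooted at its minimum. The recursive definition $\mu(\hat 0, x) = -\sum_{y < x} \mu(\hat 0, y)$, together with the fact that each non-root node of $T$ has a unique parent, lets one peel off leaves of $T$ and obtain a closed-form expression for $\mu$ at $(\hat 1, t)$ as a sum of products of M\"obius values of intervals of $Q$ weighted by local data along the unique path from the root of $T$ to $t$.

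Specializing to $T = C_n$ and $Q = B_n(q)$, and using that intervals of $B_n(q)$ between subspaces of dimensions $k$ and $m > k$ have M\"obius invariant $(-1)^{m-k} q^{\binom{m-k}{2}}$, this lemma produces a sum indexed by certain flags of subspaces of $\F_q^n$, weighted by $q$ raised to a statistic built out of the dimension jumps. The very simple structure of $C_n$ (a single root-to-leaf path of length $n-1$) means each term can be tagged by a composition of $n$ whose length is tied to $j$, and the number of flags with prescribed dimension jumps is a product of Gaussian binomial coefficients, simplifying the expression considerably.

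The main obstacle will be identifying the resulting expression with
\[
q^{\binom{n}{2}+j} a^{\maj,\exc}_{n,j}(q^{-1}) = \sum_{\substack{\sigma \in \sg_n \\ \exc(\sigma)=j}} q^{\binom{n}{2}+j-\maj(\sigma)}.
\]
The cleanest route is a weight-preserving bijection between the flag/composition data indexing the M\"obius sum and permutations in $\sg_n$ with exactly $j$ excedances, matching the $q$-weight on the poset side with the complementary $\maj$ statistic on the permutation side. Alternatively, one could compute the generating function in $n$ and $j$ of the flag sum and compare with the exponential generating function identity for $A^{\maj,\exc}_n(q,t)$ established in \cite{sw, ShWa}. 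In either direction, translating between $q$-weights on dimensions in $B_n(q)$ and the $\maj$ statistic on permutations is the combinatorial heart of the argument, and is where the machinery from the authors' earlier work on $(\maj,\exc)$-$q$-Eulerian polynomials should become essential.
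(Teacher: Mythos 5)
Your overall strategy---reduce to a M\"obius computation via Cohen--Macaulayness, and eventually match against the $A^{\maj,\exc}_n$ generating function from \cite{sw,ShWa}---is the right strategy, and your final sentence correctly identifies where the real work lies. But the proposal is missing the two technical devices that make that last step tractable in the paper, and without them the plan stalls exactly where you flag the difficulty.

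First, the paper does not take $T=C_n$. It takes $T=T_{t,n}$, the complete $t$-ary tree (so $t^j$ nodes at depth $j$), and proves a Tree Lemma of the shape
\[
\sum_{j=1}^{n}\mu\bigl(\wh{I_{j-1}(P)}\bigr)\,t^j \;=\; -\,\mu\bigl((P^{*}\ast T_{t,n})^{+}\bigr),
\]
so that the formal variable $t$ keeping track of $j$ is realized \emph{inside} the poset by the branching of the tree. That converts the whole family of M\"obius numbers $\{\mu(\wh{I_{j-1}(P)})\}_j$ into a single M\"obius invariant which can be computed by one recursion. Specializing $T$ to $C_n$, as you propose, throws away this generating-polynomial structure; you are then forced to treat each $j$ separately, and the resulting flag sums have no obvious handle.

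Second, notice that the Tree Lemma involves $P^{*}$, the dual of $P$, in a Rees product with a tree rooted at the \emph{bottom}. This is not cosmetic: the ideal $I_j$ lives inside $P^{-}\ast C_n$ (truncated at the bottom), which is not uniform from below, so a direct top-down peel of $\wh{B_n(q)^{-}\ast C_n}$ does not produce a clean recurrence. The paper's Lemma~\ref{antii} (the antiisomorphism $R_i(P)\to R_i(P^{*})$) is what lets one move to the non-truncated, self-dual $B_n(q)$ and then exploit the uniformity of the sequence $(B_0(q),\dots,B_n(q))$: Proposition~\ref{propuniform} gives the two-term convolution
\[
\sum_{k=0}^{n}\left[\begin{array}{c} n\\k\end{array}\right]_q [k+1]_t\,\mu_{n-k}(q,t)=-1,
\]
whose solution by $q$-exponential generating functions matches Corollary~\ref{comajcor} on the nose. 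Your proposed "peel the leaves, get a sum over flags weighted by $q^{\binom{m-k}{2}}$, then find a bijection or compare generating functions" is not wrong in spirit, but it leaves the entire matching with $q^{\binom{n}{2}+j}a^{\maj,\exc}_{n,j}(q^{-1})$ as an unsolved combinatorial problem, whereas the Tree Lemma plus uniformity recursion reduces it to an already-known identity from \cite{ShWa}. In short: the missing idea is the $t$-ary tree (to package all $j$ at once) together with the duality that lets one work in the uniform, non-truncated lattice.
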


As a consequence we obtain the following $q$-analog of Theorem~\ref{jj}.
\begin{cor} \label{bncnqcor} For all $n\ge 0$, let $\mathcal D_n$ be the set of derangements in $\sg_n$.  Then
\begin{equation*} \dim \tilde H_{n-1}(B_n(q)^- \ast C_n)= \sum_{\sigma \in \mathcal D_n} q^{{n\choose 2}-\maj(\s) + \exc(\s)}.\end{equation*}
\end{cor}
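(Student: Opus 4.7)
The plan is to compute $\mu_{\hat P}(\hat 0, \hat 1)$ for the bounded poset $\hat P = \{\hat 0\} \cup (B_n(q)^- \ast C_n) \cup \{\hat 1\}$, identify it (up to sign) with $\dim \tilde H_{n-1}(B_n(q)^- \ast C_n)$ by Cohen-Macaulayness, and then collapse the resulting alternating sum to the derangement expression using the $q$-derangement identity proved in \cite{sw}.

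Since $B_n(q)^- \ast C_n$ is Cohen-Macaulay of dimension $n-1$ (by the Bj\"orner--Welker theorem already cited), we have $(-1)^{n-1}\dim \tilde H_{n-1}(B_n(q)^- \ast C_n) = \mu_{\hat P}(\hat 0, \hat 1)$. I would apply the M\"obius recursion $\mu_{\hat P}(\hat 0, \hat 1) = -\sum_{\hat 0 \le x < \hat 1} \mu_{\hat P}(\hat 0, x)$ and evaluate each summand via Theorem~\ref{bncnq}. For $x = (V,k)$ with $d := \dim V$, the closed interval $[\hat 0, (V, k)]_{\hat P}$ is isomorphic (via $V \cong \F_q^d$) to the corresponding bounded interval of $\{\hat 0\} \cup (B_d(q)^- \ast C_d) \cup \{\hat 1\}$ sitting below the maximal element $(\F_q^d, k)$, so its open part is isomorphic to $I_k(B_d(q))$. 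Applying Theorem~\ref{bncnq} with $n$ replaced by $d$ gives
\[
\mu_{\hat P}(\hat 0, (V,k)) \;=\; (-1)^d\, q^{\binom{d}{2}+k}\, a^{\maj,\exc}_{d,k}(q^{-1}).
\]
Grouping by $d$ (with $\binom{n}{d}_q$ subspaces of each dimension) and summing over $k \in \{0, \ldots, d-1\}$ collapses the recursion to
\[
\dim \tilde H_{n-1}(B_n(q)^- \ast C_n) \;=\; \sum_{d=0}^n (-1)^{n-d} \binom{n}{d}_q \Phi(d),\qquad \Phi(d) := \sum_{\sigma \in \mathfrak S_d} q^{\binom{d}{2} - \maj(\sigma) + \exc(\sigma)}.
\]

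To finish, I would invoke the identity
\[
A^{\maj,\exc}_n(q,t) \;=\; \sum_{d=0}^n \binom{n}{d}_q D^{\maj,\exc}_d(q,t), \qquad D^{\maj,\exc}_d(q,t) := \sum_{\tau \in \mathcal D_d} q^{\maj(\tau)} t^{\exc(\tau)},
\]
which is a consequence of the $q$-exponential generating function for $A^{\maj,\exc}_n(q,t)$ established in \cite{sw}. Substituting $q \mapsto q^{-1}$, $t \mapsto q$ and multiplying through by $q^{\binom{n}{2}}$ recasts this as $\Phi(n) = \sum_{e=0}^n q^{\binom{n-e}{2}} \binom{n}{e}_q \Psi(e)$, with $\Psi(e) := \sum_{\tau \in \mathcal D_e} q^{\binom{e}{2} - \maj(\tau) + \exc(\tau)}$. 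Plugging this expansion into the alternating sum above and applying the standard $q$-vanishing identity $\sum_{m \ge 0} (-1)^m q^{\binom{m}{2}} \binom{N}{m}_q = \delta_{N,0}$ telescopes the double sum to $\Psi(n)$, which is precisely the claim. The main obstacle is tracking the $q$-exponents cleanly through this $q$-M\"obius inversion on the subspace lattice; the identification of sub-intervals with smaller copies of the Rees product structure is a routine verification.
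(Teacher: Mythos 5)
Your proof is correct and follows essentially the same route as the paper: apply the M\"obius recursion to $\wh{B_n(q)^- * C_n}$, identify the open interval $(\hat 0, (V,k))$ with $I_k(B_d(q))$ for $d = \dim V$ and invoke Theorem~\ref{bncnq}, then collapse the resulting alternating sum $\sum_{d=0}^n(-1)^{n-d}\left[\begin{smallmatrix}n\\d\end{smallmatrix}\right]_q\sum_{\sigma\in\sg_d}q^{\comaj(\sigma)+\exc(\sigma)}$ using a $q$-derangement identity. The only cosmetic difference is that you re-derive the needed $q$-binomial inversion from the forward expansion and the vanishing identity $\sum_m(-1)^mq^{\binom{m}{2}}\left[\begin{smallmatrix}N\\m\end{smallmatrix}\right]_q=\delta_{N,0}$, whereas the paper cites the second display of Corollary~\ref{coexcderang} at $t=q$ directly, which packages exactly that inversion.
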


The symmetric group $\mathfrak S_n$ acts on $B_n$ in an obvious way and this induces an action on $B_n^-\ast C_n$ and on each $I_j(B_n)$.   From these actions, we obtain  a representation of $\sg_n$ on $\tilde H_{n-1}(B_n^- \ast C_n)$ and on each $\rh_{n-2}(I_j(B_n))$.  We show that  these representations can be described in terms of the Eulerian quasisymmetric functions that we introduced in \cite{sw, ShWa}.  

The Eulerian quasisymmetric function $Q_{n,j}$ 
 is defined as  a sum of fundamental quasisymmetric functions associated with permutations in $\sg_n$ having $j$ excedances. The fixed-point Eulerian quasisymmetric function $Q_{n,j,k} $ refines this; it is a sum of   fundamental quasisymmetric functions associated with permutations in $\sg_n$ having $j$ excedances and $k$ fixed points.  (The precise definitions are given in Section~\ref{qfpssec}.)   Although it's not apparent from their definition,  the $Q_{n,j,k}$, and thus  the $Q_{n,j}$, are actually symmetric functions.   
  A key result of \cite{ShWa} is the following formula, which reduces to the classical formula for the exponential generating function for Eulerian polynomials,
\begin{equation}
\label{introsymgeneq}\sum_{n,j,k \geq 0}Q_{n,j,k}(\xx)t^jr^kz^n=\frac{(1-t)H(rz)}{H(zt)-tH(z)},\end{equation} where $H(z):= \sum_{n\ge 0} h_n z^n$, and $h_n$ denotes the $n$th complete homogeneous symmetric function.

Our equivariant version of Theorem~\ref{bncn} is as follows.
\begin{thm} \label{bncnsg} For all $j=0,1,\dots,n-1$,  \begin{equation} \label{iq}
{\rm {ch}} \rh_{n-2}(I_{j}(B_n) )=\omega Q_{n,j},
\end{equation}
where $\ch$ denotes the Frobenius characteristic and  $\omega$ denotes the standard involution on the ring of symmetric functions.
\end{thm}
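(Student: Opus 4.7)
Since $I_j(B_n)$ is Cohen-Macaulay of rank $n-1$, its reduced homology is concentrated in dimension $n-2$, so the $\sg_n$-equivariant reduced Euler characteristic of the order complex equals $(-1)^{n-2}\,\ch \rh_{n-2}(I_j(B_n))$. Via the equivariant version of the standard chain-sum formula for M\"obius functions, this character can be read off the equivariant M\"obius invariant of the augmented poset $\widehat{I_j(B_n)} := I_j(B_n) \cup \{\hat 0,\hat 1\}$. The problem therefore reduces to an equivariant M\"obius computation, which I plan to carry out recursively.

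Rather than working with $C_n$ directly, I would first prove an equivariant statement for Rees products $B_n \ast T$ where $T$ is a poset whose Hasse diagram is a rooted tree, specializing to $T = C_n$ only at the end; this is the setting of Sections~\ref{treesec} and~\ref{treelemsec}. Removing a leaf $\ell$ of $T$ and applying the recursive definition of the M\"obius function expresses $\mu_{\sg_n}(\widehat{B_n \ast T})$ as $\mu_{\sg_n}(\widehat{B_n \ast T'})$ (where $T' = T \setminus \{\ell\}$) plus correction terms coming from open intervals with top element $([n],\ell)$. Each such correction decomposes into Boolean and shorter Rees-product pieces whose $\sg_n$-characters are controlled inductively; the sign representation on the top homology of each truncated $B_k^-$---whose Frobenius characteristic is $e_k = \omega h_k$---is what introduces the $\omega$ twist in the final formula.

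For the identification with $\omega Q_{n,j}$, I would package the characteristics (refined by a natural fixed-point statistic, to match the finer $Q_{n,j,k}$) into a generating function $F(t,r,z)$ valued in the ring of symmetric functions. The leaf-removal recursion should translate into a functional equation that, after applying $\omega$, coincides with the one satisfied by the right-hand side of~(1.3); matching initial data will then pin down the characteristic as $\omega Q_{n,j}$. The main technical obstacle is to arrange the inductive step so that the correction terms are honest (not merely virtual) $\sg_n$-modules with clean character decompositions, and to verify that the resulting functional equation really does match~(1.3). Once these are in place, Theorem~\ref{bncn} drops out by passing to dimensions.
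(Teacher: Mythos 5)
Your high-level plan—pass from $C_n$ to a tree Rees product, compute an equivariant Lefschetz character recursively, package it as a generating function, and match it against the right-hand side of~(\ref{introsymgeneq})—is indeed the shape of the paper's argument, but two of the load-bearing ideas are missing and the recursion you propose does not close as you describe. First, the paper does not "specialize to $T=C_n$ at the end"; it works with the complete $t$-ary tree $T_{t,n}$ precisely so that the free parameter $t$ tags the rank in the tree coordinate, and the passage back to the $I_j(B_n)$'s is accomplished by a nontrivial Tree Lemma (Theorem~\ref{tree} and its equivariant form Theorem~\ref{gtreelem}). The heart of that lemma is the order-reversing bijection $\psi_i\colon R_i(P)\to R_i(P^*)$, $(a,x_j)\mapsto(a,x_{i-j})$ of Lemma~\ref{antii}; this duality is what converts the intervals below $(\hat 1_P, x_i)$ in $R(P)$ into $\mu(\wh{I_{i-1}(P)})$, and it is not recovered by the leaf-removal scheme you outline. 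Without it, the "correction terms" from intervals under $([n],\ell)$ are intervals of $B_n \ast C_{n+1}$, not the $I_j(B_n)$'s (intervals of $B_n^-\ast C_n$), and there is no visible route from one to the other.

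Second, the leaf-removal recursion itself is more delicate than stated: deleting a depth-$n$ leaf $\ell$ removes the single element $([n],\ell)$, but deleting a shallower vertex $w$ of rank $k$ removes every $(p,w)$ with $r(p)\geq k$, so once the bottom layer is peeled off the inductive step no longer has the clean form $\mu(\wh{B_n*T})=\mu(\wh{B_n*T'})+(\text{one interval})$. In the paper, the recursion that actually produces the functional equation is not a recursion on the tree at all but a recursion in $n$: the equivariant uniform-poset recursion (Proposition~\ref{gpropuniform}), obtained by applying Sundaram's identity~(\ref{sund}) to the $\sg_n$-uniform poset $(B_n*T_{t,n})^+$, gives $\sum_{k=0}^n [k+1]_t\,h_k\,L_{n-k}(t)=-h_n$, from which $F_t(z)G_t(z)=-H(z)$ and then, via $H(-z)E(z)=1$ and $\omega$, the identification with $\sum_j Q_{n,j}t^j$ through Theorem~\ref{introsymgenth}. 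Also, the fixed-point refinement $Q_{n,j,k}$ you invoke plays no role in this proof (it enters only in Corollary~\ref{bncnsgcor}), and the $\omega$-twist comes from the $H\leftrightarrow E$ swap in the generating-function manipulation rather than from any intrinsic appearance of the sign representation in the inductive pieces. So the architecture you describe is right in spirit, but the two concrete mechanisms—the duality antiisomorphism behind the Tree Lemma, and Sundaram's lemma driving the uniform recursion—are exactly the steps your sketch leaves unaddressed.
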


We derive the following equivariant version of Theorem~\ref{jj} as a consequence. 
\begin{cor}\label{bncnsgcor} For all $n \ge 1$, $$ \ch \rh_{n-1}(B_n^-*C_n) = \sum_{j=0}^{n-1} \omega Q_{n,j,0}.  $$
\end{cor}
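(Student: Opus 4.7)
The plan is to derive the corollary from Theorem~\ref{bncnsg} via the long exact sequence of a simplicial pair together with the generating function identity (\ref{introsymgeneq}). Setting $r = 0$ and $r = 1$ in (\ref{introsymgeneq}) and taking ratios gives the symmetric-function identity
\[
\sum_{n,j\ge 0} Q_{n,j}\, t^j z^n \;=\; H(z)\, \sum_{n,j\ge 0} Q_{n,j,0}\, t^j z^n.
\]
Summing over $j$, inverting via $H(z)^{-1} = \sum_k (-1)^k e_k z^k$, applying $\omega$, and invoking Theorem~\ref{bncnsg}, the corollary is seen to be equivalent to the equivariant identity
\[
\ch \tilde H_{n-1}(B_n^- \ast C_n) \;=\; \sum_{k=0}^{n}(-1)^k h_k \sum_j \ch \tilde H_{n-k-2}(I_j(B_{n-k})).
\]

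To connect $\tilde H_{n-1}(B_n^- \ast C_n)$ to the $\tilde H_{n-2}(I_j(B_n))$'s, set $X := \Delta(B_n^- \ast C_n)$ and $\bar X := \Delta(\bar I)$, where $\bar I := \{(S,k) \in B_n^- \ast C_n : |S| < n\} \cong (B_n \setminus \{\hat 0, \hat 1\}) \ast C_n$. Each maximal element $M_j := (\hat 1, j)$ has closed star equal to the cone on $\Delta(I_j(B_n))$ with apex $M_j$, and any two such closed stars meet only inside $\bar X$, so $X/\bar X \simeq \bigvee_{j=0}^{n-1} \Sigma\, \Delta(I_j(B_n))$. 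Both $X$ and $\bar I$ are Cohen--Macaulay (of rank $n-1$, the latter as a Rees product of Cohen--Macaulay posets by Bj\"orner--Welker), so the long exact sequence of the pair $(X, \bar X)$ collapses to a short exact sequence of $\sg_n$-modules
\[
0 \longrightarrow \tilde H_{n-1}(X) \longrightarrow \bigoplus_{j=0}^{n-1} \tilde H_{n-2}(I_j(B_n)) \longrightarrow \tilde H_{n-2}(\bar X) \longrightarrow 0.
\]
Theorem~\ref{bncnsg} identifies the middle term's Frobenius characteristic as $\sum_j \omega Q_{n,j}$.

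It then remains to show
\[
\ch \tilde H_{n-2}(\bar X) \;=\; \sum_{k\ge 1} e_k \cdot \ch \tilde H_{n-k-1}(B_{n-k}^- \ast C_{n-k}),
\]
whereupon induction on $n$ (using the reformulation above) closes the proof. This final identity is the main obstacle. I would attempt it by iterating the pair/excision construction on $\bar X$, using the cover of $\bar I$ by the closed principal ideals below its maximal elements $(S,k)$ with $|S|=n-1$: the open principal ideal below such an element is isomorphic either to $I_k(B_{n-1})$ (for $k \le n-2$) or to the proper part $\Delta(B_{n-1} \setminus \{\hat 0, \hat 1\})$ (for $k = n-1$, whose top homology is the sign representation of $\sg_{n-1}$). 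The coatom-stabilizers $\sg_{n-1} \subset \sg_n$ together with these sign-representation contributions from the $k = n-1$ ideals should account inductively for the $e_k$ factors in the desired identity.
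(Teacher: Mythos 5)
Your initial reduction is correct: setting $r=1$ and $r=0$ in (\ref{introsymgeneq}) and taking the ratio indeed gives $\sum_{n,j} Q_{n,j}\,t^jz^n = H(z)\sum_{n,j} Q_{n,j,0}\,t^jz^n$, and combining this with Theorem~\ref{bncnsg} shows the corollary is equivalent to the equivariant identity $\ch\tilde H_{n-1}(B_n^-\ast C_n) = \sum_{k=0}^n(-1)^kh_k\sum_j\ch\tilde H_{n-k-2}(I_j(B_{n-k}))$, which is exactly equation~(\ref{sundeq2}) of the paper. Your topological setup is also sound: both $B_n^-\ast C_n$ and $\bar I\cong(B_n\setminus\{\hat 0,\hat 1\})\ast C_n$ are Cohen--Macaulay (of dimensions $n-1$ and $n-2$, respectively --- your ``both of rank $n-1$'' is a slip), the wedge decomposition of $X/\bar X$ is correct since distinct maximal elements are incomparable, and the long exact sequence of the pair does collapse to the asserted short exact sequence of $\sg_n$-modules.

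The genuine gap is that the crucial identity
\[
\ch\tilde H_{n-2}(\bar X) \;=\; \sum_{k\ge 1} e_k\,\ch\tilde H_{n-k-1}(B_{n-k}^-\ast C_{n-k})
\]
is asserted, not proved, and you explicitly flag it as ``the main obstacle.'' The sketch offered for it contains an off-by-one error: the maximal elements of $\bar I$ are the pairs $(S,k)$ with $|S|=n-1$ and $0\le k\le n-2$, so the case $k=n-1$ you invoke does not occur; what you presumably mean is that for the extremal value $k=n-2$ one has $I_{n-2}(B_{n-1})\cong B_{n-1}\setminus\{\hat 0,\hat 1\}$, whose top homology is the sign representation. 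More importantly, ``iterating the pair/excision construction'' on $\bar I$ does not terminate cleanly --- after removing one layer of maximal elements one is left with a rank-truncated Boolean algebra Rees-producted with a chain, which is not of the same shape as the original family, and the asserted appearance of $e_k$ factors from stabilizer chains is a hope rather than an argument. The paper sidesteps this entirely by applying Sundaram's equivariant M\"obius relation~(\ref{sund}) once to $\wh{B_n^-\ast C_n}$: that single application produces~(\ref{sundeq2}) directly, with all the rank-by-rank terms (including what your iteration is trying to build up) packaged into one alternating sum, and no induction or auxiliary poset $\bar I$ is needed. Without a proof of the displayed identity, your argument does not close.
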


The expression on the right hand side of (\ref{introsymgeneq})  has occurred several times in the literature (see \cite[Sec.~7]{ShWa}), and these occurrences yield corollaries of Theorem~\ref{bncnsg} and Corollary~\ref{bncnsgcor}.   We discuss three of these corollaries   in Section~\ref{corsec}.  One is a consequence of a  formula of Procesi \cite{pr} and Stanley \cite{st2} on the representation of the symmetric group on the cohomology of the toric variety  associated with the Coxeter complex of $\sg_n$.  Another  corollary is a consequence of   a refinement of a result of Carlitz, Scoville and Vaughan \cite{csv} due to Stanley (cf. \cite[Theorem 7.2]{ShWa}) on words with no adjecent repeats.   The third is
 a consequence of     MacMahon's formula  \cite[Sec. III, Ch.III]{mac1} for multiset derangements.

In Section~\ref{bcsec}, we present type BC analogs (in the context of Coxeter groups) of both Theorem~\ref{jj} and its $q$-analog, Corollary~\ref{bncnqcor}.  In the type BC analog of Theorem~\ref{jj}, the Boolean algebra $B_n$ is replaced by the poset of faces of the $n$-dimensional cross polytope (whose order complex is the Coxeter complex of type BC).  The type BC derangements are the elements  of the type BC Coxeter group that have no fixed points in their action on the  vertices of the cross polytope.  
In the type BC analog of Corollary~\ref{bncnqcor},  the lattice of subspaces $B_n(q)$  is replaced by the poset of totally isotropic subspaces of $\F_q^{2n}$ (whose order complex  is the  building of type BC).  

\section{Preliminaries}  \label{prelimsec}

\subsection{Quasisymmetric functions and permutation statistics} \label{qfpssec}
In this section we review some of our work in  \cite{ShWa}.

A {\it permutation statistic} is a function $f:\bigcup_{n \geq 1}\sg_n \rightarrow \nn$.  (Here $\nn$ is the set of nonnegative integers and $\pp$ is the set of positive integers.)  Two well studied permutation statistics are the {\it excedance statistic} $\exc$  and the {\it major index} $\maj$.  For $\si \in \sg_n$, $\exc(\si)$ is the number of excedances of $\si$ and $\maj(\si)$ is the sum of all descents of $\si$, as described above. We also define the {\it fixed point statistic} $\fix(\si)$ to be the number of $i \in [n]$ satisfying $\si(i)=i$, and the {\it comajor index} $\comaj$ by 
\[
\comaj(\si):={{n} \choose {2}}-\maj(\si).
\]

\begin{remark}
Note that our definition of $\comaj$ is different from a  commonly used definition in which the comajor index of $\si \in \sg_n$ is defined to be $n\,\des(\si)-\maj(\si)$, where $\des(\s)$ is the number of descents of $\s$.
\end{remark}

For any collection $\fff_1,\ldots,\fff_r$ of permutation statistics, and any $n \in \pp$, we define the {\it generating polynomial}
\[
A_n^{\fff_1,\ldots\fff_r}(t_1,\ldots,t_r):=\sum_{\si \in \sg_n}\prod_{i=1}^{r}t_i^{\fff_i(\si)}.
\]

A {\it symmetric function} is a power series of bounded degree (with coefficients in some given ring $R$) in countably many variables $x_1,x_2,\ldots$ that is invariant under any permutation of the variables.  A {\it quasisymmetric function} is a power series $f$ in these same variables such that for any $k \in \pp$ and any three $k$-tuples $(i_1>\ldots>i_k)$, $(j_1>\ldots>j_k)$ and $(a_1,\ldots,a_k)$ from $\pp^k$, the coefficients in $f$ of $\prod_{s=1}^k x_{i_s}^{a_s}$ and $\prod_{s=1}^k x_{j_s}^{a_s}$ are equal.  Every symmetric function is a quasisymmetric function.  We write $f(\xx)$ for any power series $f(x_1,x_2,\ldots)$.

Recall that, for $n \in \nn$, the complete homogeneous symmetric function $h_n(\xx)$ is the sum of all monomials of degree $n$ in $x_1,x_2,\ldots$, and the elementary symmetric function $e_n(\xx)$ is the sum of all such monomials that are squarefree.  
The {\it Frobenius characteristic} map $\ch$ sends each virtual $\sg_n$-representation to a symmetric function (with integer coefficients) that is homogeneous of degree $n$.  There is a unique involutory automorphism $\omega$ of the ring of symmetric functions that maps $h_n(\xx)$ to $e_n(\xx)$ for every $n \in \nn$. For any representation $V$ of $S_n$, we have
\begin{equation} \label{frsg}
\omega(\ch(V))=\ch(V \otimes \sgn),
\end{equation}
where $\sgn$ is the sign representation of $\sg_n$.

For $n \in \pp$ and $S \subseteq [n-1]$, define
\[
F_{S,n}=F_{S,n}(\xx):=\sum_{\scriptsize\begin{array}{c}i_1 \geq \ldots \geq i_n \geq 1\\ j \in S \Rightarrow i_j>i_{j+1}\end{array}}x_{i_1}\dots x_{i_n}
\] and let
$F_{\emptyset,0}=1$.
Each  $F_{S,n}$ is a quasisymmetric function.  The involution $\omega$ extends to an involution on the ring of quasisymmetric functions.  In fact, $$\omega(F_{S,n}) = F_{[n-1]\setminus S,n}.$$

For $n \in \pp$, set $[\ov{n}]:=\{\ov{1},\ldots,\ov{n}\}$ and order $[n] \cup [\ov{n}]$ by
\begin{equation} \label{ovor}
\ov{1}<\ldots<\ov{n}<1<\ldots<n.
\end{equation}
For $\si=\si_1\ldots\si_n \in \sg_n$, written in one line notation, we obtain $\ov{\si}$ by replacing $\si_i$ with $\ov{\si_i}$ whenever $i$ is an excedance of $\si$.  We now define $\Exd(\si)$ to be the set of all $i \in [n-1]$ such that  $i$ is a descent of $ \ov{\s}$, i.e. the element in position $i$ of $\ov{\si}$ is larger, with respect to the order (\ref{ovor}), than that in position $i+1$.  For example, if $\si=42153$, then $\ov{\si}=\ov{4}21\ov{5}3$ and $\Exd(\si)=\{2,3\}$.

For $n \in \pp$, $0\le j < n-1$ and $0 \leq k \leq n$, we introduced in \cite{ShWa} the {\it fixed point Eulerian quasisymmetric functions}
\[
Q_{n,j,k} = Q_{n,j,k}(\x):=
\sum_{\scriptsize \begin{array}{c} \s \in \sg_n \\ \exc(\s) = j \\ \fix(\s) =k
\end{array}} F_{\Exd(\s),n}(\x),
\]
and the {\it Eulerian quasisymmetric functions}
\[
Q_{n,j}:=\sum_{k=0}^{n}Q_{n,j,k}.
\]
We also set $Q_{0,0}=Q_{0,0,0}=1$.  It turns out that the fixed point Eulerian quasisymmetric functions (and therefore the Eulerian quasisymmetric functions) are symmetric.  

We define two power series in the variable $z$ with coefficients in the ring of symmetric functions,
\[
H(z):=\sum_{n \geq 0}h_n(\xx)z^n,
\]
and
\[
E(z):=\sum_{n \geq 0}e_n(\xx)z^n.
\]
For any $n \in \pp$, the {\it $q$-number} $[n]_q$ is the polynomial $1+q+\ldots+q^{n-1}$.  The key result in \cite{ShWa} is as follows.

\begin{thm}[\cite{ShWa}, Theorem~1.2] \label{introsymgenth}
We have
\begin{eqnarray}
\label{introsymgenth1}\sum_{n,j,k \geq 0}Q_{n,j,k}(\xx)t^jr^kz^n&=&\frac{(1-t)H(rz)}{H(zt)-tH(z)}\\ \label{introsymgenth2}&=& \frac{H(rz)}{1-\sum_{n\ge 2}t[n-1]_t h_nz^n}.
\end{eqnarray}
\end{thm}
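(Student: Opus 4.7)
The second equality in the theorem follows from a direct algebraic manipulation. Writing $H(zt) - tH(z) = \sum_{n \geq 0} h_n z^n (t^n - t)$ and noting that $(t^n - t)/(1-t)$ equals $1$ for $n = 0$, $0$ for $n = 1$, and $-t[n-1]_t$ for $n \geq 2$, one obtains
\begin{equation*}
H(zt) - tH(z) = (1-t)\Bigl(1 - \sum_{n \geq 2} t[n-1]_t h_n z^n\Bigr),
\end{equation*}
so the two closed forms on the right-hand side of the theorem agree.

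To prove the first equality, my plan proceeds in three stages. First, expand each fundamental quasisymmetric function in the definition of $Q_{n,j,k}$ as a sum of monomials and interchange summations, rewriting the left-hand side as a weighted count of pairs $(\s, w)$, where $\s \in \sg_n$ and $w = w_1 \cdots w_n$ is a weakly decreasing positive-integer word with $w_i > w_{i+1}$ for each $i \in \Exd(\s)$. Second, set up a bijection between such pairs and \emph{banners}---decorated words over the ordered alphabet $\{\overline{1}, \overline{2}, \ldots\} \cup \{1, 2, \ldots\}$ in which each letter is either barred or unbarred, with the barring pattern recording the excedance positions of $\s$. This bijection must transport the triple $(\exc(\s), \fix(\s), \wt(w))$ into intrinsic statistics of the banner, namely the number of barred letters, the number of ``trivial'' unbarred letters (corresponding to fixed points), and the underlying monomial weight.

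Third, compute the banner generating function via a block decomposition: each banner factors uniquely into maximal blocks determined by the barring pattern, where a fixed-point block of length $1$ contributes $r h_1 z$ and a non-fixed-point block of length $n \geq 2$ contributes $t[n-1]_t h_n z^n$ (the factor $t[n-1]_t$ encoding the legal placements of the barred entries within the block). Summing the resulting geometric series yields $H(rz) / \bigl(1 - \sum_{n \geq 2} t[n-1]_t h_n z^n\bigr)$, which agrees with the right-hand side of the theorem by the algebraic step above.

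The main obstacle is the bijection in the second step: the overlining operation that defines $\Exd$ must translate the cycle structure of $\s$---in particular the distinction between fixed-point and non-fixed-point cycles---into the block structure of the banner in a way that simultaneously preserves all four statistics. Once this correspondence is in hand, the final generating-function computation is routine.
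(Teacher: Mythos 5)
The theorem is not proved in this paper; it is quoted as Theorem~1.2 of \cite{ShWa}, and its proof there is considerably more involved than your outline suggests. Your derivation of the second equality from the first is correct and is a routine rearrangement of $H(zt)-tH(z)$. Your high-level plan for the first equality (expand each $F_{\Exd(\s),n}$ into monomials, encode the pair $(\s,w)$ as a word over a barred/unbarred alphabet, decompose and compute) does resemble the banner/ornament machinery of \cite{ShWa}, so the strategy is in the right spirit.

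The gap is in your third step, and it is not merely a matter of nailing down the bijection. If a fixed-point block has length $1$ and contributes $rh_1z$, while a non-fixed-point block of length $n\ge 2$ contributes $t[n-1]_t h_n z^n$, then the geometric series over sequences of such blocks sums to $\bigl(1-rh_1z-\sum_{n\ge2}t[n-1]_t h_n z^n\bigr)^{-1}$, which is \emph{not} $H(rz)\bigl(1-\sum_{n\ge2}t[n-1]_th_nz^n\bigr)^{-1}$. A concrete discrepancy already appears at $n=2$: the identity permutation has $\Exd=\emptyset$, so $Q_{2,0,2}=F_{\emptyset,2}=h_2$, while your decomposition into two length-$1$ fixed-point blocks would produce $h_1^2$. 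More fundamentally, the numerator $H(rz)$ encodes the factorization $Q_{n,j,k}=h_k\,Q_{n-k,j,0}$, which is a genuine symmetric-function identity rather than a consequence of a positional block decomposition: for $\sigma=213$ one has $\fix(\sigma)=1$ but $F_{\Exd(\sigma),3}=h_3$, so fixed points do not simply ``detach'' as length-one factors. Any banner decomposition that is to prove the theorem must first account for this $h_k$-factorization (which in \cite{ShWa} is handled as a separate step, together with establishing the symmetry of $Q_{n,j,k}$), and only then reduce to a transfer-matrix count of the derangement pieces giving $\bigl(1-\sum_{n\ge 2}t[n-1]_t h_n z^n\bigr)^{-1}$.
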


 It is shown in \cite{ShWa} that the stable principal specialization (that is, substitution of $q^{i-1}$ for each variable $x_i$) of $F_{\Exd(\s),n}$ is given by  $$F_{\Exd(\s),n}(1,q,q^2,\dots) = (q;q)_n^{-1} q^{maj(\s) -\exc(s)},$$ where $(p;q)_n:=\prod_{i=1}^n (1-pq^{i-1})$.
Hence $$\sum_{j,k \ge 0} Q_{n,j,k}(1,q,\dots)t^j r^k := (q;q)_n^{-1} A_n^{\maj,\exc,\fix} (q;q^{-1}t,r).$$
Using the stable principal specialization 
we obtained from Theorem \ref{introsymgenth} a formula for $A_n^{\maj,\exc,\fix}$.  From that formula, we derived the two following results.  Before stating them, we recall the definitions
\[
\begin{array}{ll} [n]_q!:=\prod_{j=1}^{n}[j]_q, &  \left[\begin{array}{c} n \\k\end{array}\right]_q:=\frac{[n]_q!}{[k]_q![n-k]_q!}, \\ \Exp_q(z):=\sum_{n \geq 0}q^{{n} \choose {2}}\frac{z^n}{[n]_q!}, & \exp_q(z):=\sum_{n \geq 0}\frac{z^n}{[n]_q!}. \end{array}
\]

\begin{cor}[\cite{ShWa}, Corollary 4.5] \label{comajcor}We have
\begin{equation} \label{expgeneqfixco}
\sum_{n \geq 0}A^{\comaj,\exc,\fix}_n(q,t,r)\frac{z^n}{[n]_q!}=\frac{(1-tq^{-1})\Exp_q(rz)}{\Exp_q(ztq^{-1})-(tq^{-1})\Exp_q(z)}
\end{equation}
\end{cor}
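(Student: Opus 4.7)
My plan is to deduce Corollary~\ref{comajcor} as the stable principal specialization of the symmetric-function identity~(\ref{introsymgenth1}) in Theorem~\ref{introsymgenth}. The paper has already flagged exactly the right ingredient, namely the specialization $F_{\Exd(\sigma),n}(1,q,q^2,\dots)=q^{\maj(\sigma)-\exc(\sigma)}/(q;q)_n$, which absorbs both $\maj$ and $\exc$ into a single power of $q$; at the level of content there is nothing new, and the work is to carry out the substitutions in the right order and then to convert $\maj$ into $\comaj$ via $q\leftrightarrow q^{-1}$.

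The first step is to set $x_i=q^{i-1}$ in (\ref{introsymgenth1}). On the left, summing the displayed specialization of $F_{\Exd(\sigma),n}$ over $\sigma\in\sg_n$ weighted by $t^{\exc(\sigma)}r^{\fix(\sigma)}$ yields
\[
\sum_{j,k\ge 0}Q_{n,j,k}(1,q,q^2,\dots)\,t^jr^k \;=\; \frac{1}{(q;q)_n}\,A_n^{\maj,\exc,\fix}(q,q^{-1}t,r).
\]
On the right, the classical $h_n(1,q,q^2,\dots)=1/(q;q)_n$ specializes $H(z)$ to $\sum_n z^n/(q;q)_n$. Because $(q;q)_n=(1-q)^n[n]_q!$, the rescaling $z\mapsto (1-q)z$ then replaces $(q;q)_n$ by $[n]_q!$ and each $H(\cdot)$ by $\exp_q(\cdot)$, producing the intermediate identity
\[
\sum_{n\ge 0}A_n^{\maj,\exc,\fix}(q,q^{-1}t,r)\frac{z^n}{[n]_q!} \;=\; \frac{(1-t)\exp_q(rz)}{\exp_q(zt)-t\exp_q(z)}.
\]

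Two further substitutions finish the job. Replacing $t$ by $qt$ clears the stray $q^{-1}$ inside $A_n$ and yields the clean $(\maj,\exc,\fix)$ exponential generating function in $\exp_q$. Then substituting $q\mapsto q^{-1}$ converts the identity to $\comaj$: by definition $A_n^{\comaj,\exc,\fix}(q,t,r)=q^{\binom{n}{2}}A_n^{\maj,\exc,\fix}(q^{-1},t,r)$, while the factorial identity $1/[n]_{q^{-1}}!=q^{\binom{n}{2}}/[n]_q!$ supplies the matching $q^{\binom{n}{2}}$ on the left, and $\exp_{q^{-1}}(z)=\Exp_q(z)$ converts the right side into the claimed $\Exp_q$ form. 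The only real obstacle is bookkeeping the successive substitutions correctly; all the mathematical content sits in Theorem~\ref{introsymgenth} and standard $q$-exponential identities, so once the substitutions are performed in the order above, Corollary~\ref{comajcor} drops out.
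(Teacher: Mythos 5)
Your proof is correct and follows exactly the route the paper indicates: take the stable principal specialization $x_i=q^{i-1}$ of the symmetric-function identity in Theorem~\ref{introsymgenth}, rescale $z\mapsto(1-q)z$ to pass from $(q;q)_n$ to $[n]_q!$, clear the stray $q^{-1}$ by $t\mapsto qt$, and then substitute $q\mapsto q^{-1}$ using $q^{\binom{n}{2}}/[n]_q!=1/[n]_{q^{-1}}!$ and $\exp_{q^{-1}}=\Exp_q$. All the substitution bookkeeping checks out, so this is a correct elaboration of the sketch the paper gives when it says the corollary follows ``using the stable principal specialization.''
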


\begin{cor}[\cite{ShWa}, Corollary 4.6] \label{coexcderang} 

For all $n \ge 0$, we have
\begin{equation*} \sum_{\scriptsize \begin{array}{c} \s \in \mathfrak S_n \\ \fix(\s) = k \end{array}} q^{\comaj(\s)} t^{ \exc(\s)} = q^{k\choose 2}  \left[\begin{array}{c} n \\k\end{array}\right]_q \,\,\sum_{\s \in \mathcal D_{n-k}} q^{\comaj(\s)} t^{ \exc(\s)} .\end{equation*}
Consequently, $$ \sum_{\s \in \mathcal D_n} q^{\comaj(\s)} t^{ \exc(\s)}= \sum_{k = 0}^n (-1)^{k}  \left[\begin{array}{c} n \\k\end{array}\right]_q A_{n-k}^{\comaj,\exc}(q,t).$$
\end{cor}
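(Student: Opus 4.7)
The plan is to deduce Corollary \ref{coexcderang} directly from the generating function identity \eqref{expgeneqfixco} of Corollary \ref{comajcor}. The key structural observation is that the right-hand side of \eqref{expgeneqfixco} factors: writing $D_n(q,t) := \sum_{\s \in \mathcal{D}_n} q^{\comaj(\s)} t^{\exc(\s)}$, specialization at $r=0$ kills the $\Exp_q(rz)$ factor in the numerator (since $\Exp_q(0) = 1$) and selects exactly those permutations having no fixed points, so
\begin{equation*}
\sum_{n \geq 0} D_n(q,t)\frac{z^n}{[n]_q!} \;=\; \frac{1 - tq^{-1}}{\Exp_q(ztq^{-1}) - tq^{-1}\Exp_q(z)}.
\end{equation*}
Comparing with \eqref{expgeneqfixco} we then obtain the factorization
\begin{equation*}
\sum_{n \geq 0} A_n^{\comaj,\exc,\fix}(q,t,r)\frac{z^n}{[n]_q!} \;=\; \Exp_q(rz) \cdot \sum_{n \geq 0} D_n(q,t)\frac{z^n}{[n]_q!}.
\end{equation*}

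To obtain the first identity of the corollary, I would extract the coefficient of $r^k z^n/[n]_q!$ on both sides. On the left this is $\sum_{\s \in \sg_n,\; \fix(\s)=k} q^{\comaj(\s)} t^{\exc(\s)}$. On the right, since $\Exp_q(rz) = \sum_k q^{\binom{k}{2}} r^k z^k / [k]_q!$, the $q$-exponential convolution yields the coefficient $q^{\binom{k}{2}} \binom{n}{k}_q D_{n-k}(q,t)$. Equating these gives the first displayed identity.

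For the consequence, I would set $r=1$ in the factorization, which recovers the full generating function $\sum_n A_n^{\comaj,\exc}(q,t)\, z^n/[n]_q!$ on the left. To invert, I would use the classical identity $\Exp_q(z)\, \exp_q(-z) = 1$ (an immediate consequence of the $q$-binomial theorem $\prod_{i=0}^{n-1}(1-q^i) = 0$ for $n \geq 1$), so multiplying both sides by $\exp_q(-z)$ yields
\begin{equation*}
\sum_{n \geq 0} D_n(q,t) \frac{z^n}{[n]_q!} \;=\; \exp_q(-z) \cdot \sum_{n \geq 0} A_n^{\comaj,\exc}(q,t)\frac{z^n}{[n]_q!}.
\end{equation*}
Extracting the coefficient of $z^n/[n]_q!$ via $q$-exponential convolution gives the desired alternating sum $D_n(q,t) = \sum_{k=0}^n (-1)^k \binom{n}{k}_q A_{n-k}^{\comaj,\exc}(q,t)$.

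The only delicate step is identifying the factor $1/(\Exp_q(ztq^{-1}) - tq^{-1}\Exp_q(z))$ on the right of \eqref{expgeneqfixco}, up to the constant $(1-tq^{-1})$, as genuinely the derangement exponential generating function; but this is immediate from the $r = 0$ specialization. Everything else reduces to the $q$-exponential convolution identities, so I do not anticipate a serious obstacle beyond careful bookkeeping of the $q^{\binom{k}{2}}$ weights.
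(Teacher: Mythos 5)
This paper does not actually prove Corollary~\ref{coexcderang}; it cites it from \cite{ShWa} (there it is Corollary~4.6, derived from the generating function identity that appears here as Corollary~\ref{comajcor}). Your derivation is correct and is the natural one given that setup: specializing $r=0$ in \eqref{expgeneqfixco} identifies the factor $(1-tq^{-1})/(\Exp_q(ztq^{-1})-tq^{-1}\Exp_q(z))$ as the $q$-exponential generating function for $D_n(q,t)$, so the full series factors as $\Exp_q(rz)$ times the derangement series; extracting the coefficient of $r^k z^n/[n]_q!$ gives the first displayed identity, and multiplying the $r=1$ specialization by $\exp_q(-z)$ and using $\Exp_q(z)\exp_q(-z)=1$ gives the second. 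One small quibble: the parenthetical describing $\prod_{i=0}^{n-1}(1-q^i)=0$ as ``the $q$-binomial theorem'' is misstated --- what you mean is that setting $x=1,\,y=-1$ in the $q$-binomial theorem $\prod_{i=0}^{n-1}(x+q^iy)=\sum_k \left[\begin{smallmatrix} n\\ k\end{smallmatrix}\right]_q q^{\binom{k}{2}}x^{n-k}y^k$ yields $\sum_k(-1)^k q^{\binom{k}{2}}\left[\begin{smallmatrix} n\\ k\end{smallmatrix}\right]_q=0$ for $n\ge 1$, which is exactly the convolution identity $\Exp_q(z)\exp_q(-z)=1$. The argument itself is sound.
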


\subsection{Homology of posets}

We say that a poset $P$ is {\em bounded} if it has a minimum element $\hat 0_P$ and a maximum element $\hat 1_P$.  For any poset $P$, let $\wh{P}$ be the
bounded poset obtained from $P$ by adding a minimum element  and a
maximum element  and let $P^+$ be the poset obtained from $P$ by adding only a maximum element. For a  poset $P$ with minimum element $\hz_P$,
let $P^-=P \setminus \{\hz_P\}$.  For $x \le y$ in $P$,  let $(x,y)$ denote the open interval $\{z \in P : x <z <y\}$  and $[x,y]$ denote the closed interval $\{z \in P : x \le z \le y\}$.  For $y \in P$, by {\em closed principal lower order ideal generated by} $y$, we  mean the subposet $\{x \in P : x \le y\}$.  Similarly the {\em open  principal lower order ideal generated by} $y$ is the subposet $\{x \in P : x < y\}$. Upper order ideals are defined similarly.

A poset $P$ is said to be {\em ranked} (or pure) if all its maximal chains are of the same length.  The {\em length} of a ranked poset $P$ is the common length of its maximal chains.  If $P$ is a ranked poset, the rank $r_P(y)$ of an element $y\in P$ is the length of the closed principal lower order ideal  generated by $y$.  

The {\it M\"obius invariant} of any bounded poset $P$  is given by
$$\mu(P) := \mu_P(\hat 0_P,\hat 1_P),$$ where $\mu_P$ is the M\"obius function on $P$.   It follows
from a  well known result of P. Hall (see \cite[Proposition 3.8.5]{st5}) and the Euler-Poincar\'e formula that if poset $P$ has length $n$ then
\begin{equation} \label{eupon} \mu(\hat P) =\sum_{i=0}^n (-1)^i \dim \tilde H_i(P).\end{equation}  Hence if
 $P$ is Cohen-Macaulay then for all $x \le y$ in $\hat P$
\begin{equation} \label{eupon2} \mu_P(x,y) = (-1)^{r} \dim \tilde H_r((x,y)),\end{equation}
where $r= r_P(y)-r_P(x)-2$, and if $y=x$ or $y$ covers $x$ we set $\tilde H_r((x,y)) = \C$.

Suppose a  group $G$ acts on a  poset $P$ by order preserving bijections (we say that $P$ is a $G$-poset).  The group $G$ acts simplicially on $\Delta P$ and thus arises a linear representation of $G$ on each  homology group of $P$.    Now suppose $P$ is ranked of length $n$.  The given
action also determines an action of $G$  on $P \ast X$  for
any length $n$ ranked poset $X$  defined by $(a,x)g=(ag,x)$ for all
$a \in P$, $x \in X$ and $g \in G$.  For a  ranked $G$-poset $P$ of length $n$ with a minimum element $\hat 0$, the  action of $G$  on $P$ restricts to an action on $P^-$, which gives an action of $G$ on $P^- \ast C_n$.  This action restricts to an action of $G$ on each subposet $I_j(P)$.  

We will need the following
result of Sundaram \cite{sun} (see \cite[Theorem 4.4.1]{w1}):
 If $G$ acts on a bounded  poset $P$ of length $n$  then we have the virtual $G$-module isomorphism,
\begin{equation}\label{sund}  \bigoplus_{r=0}^n (-1)^{r} \bigoplus_{x \in P/G} \tilde H_{r-2}((\hat 0,x)) \uparrow_{G_x}^G \cong_G 0,\end{equation}
where  $P/G$ denotes a complete set of orbit representatives, $G_x$ denotes the stabilizer of $x$, and $\uparrow_{G_x}^G$
denotes the induction of the $G_x$ module  from $G_x$ to $G$.
   Here $H_{r-2}((\hat 0,x))$ is the trivial representation of $G_x$ if   $x=\hat 0$ or $x$ covers $\hat 0$.

\section{Rees products with trees} \label{treesec}

We  prove the results stated in the introduction by working with the Rees product of the (nontruncated) Boolean algebra  $B_n$  with a tree and its $q$-analog, the Rees product of the (nontruncated) subspace lattice $B_n(q)$ with a tree.  Theorems~\ref{tree} and~\ref{gtreelem} will then be used to relate
these Rees products to the ones considered in the introduction.  

 For $n,t \in \PP$, let $\ttn$ be the poset whose Hasse diagram is
a complete $t$-ary tree of height $n$, with the root at the bottom.  By {\em complete}  we mean that every nonleaf node has exactly $t$ children and that all the leaves are distance $n$ from the root. The following result, which  is  interesting in its own right, will be  used to prove the results stated in the introduction.

\begin{thm}\label{treeeq123} For all $n,t\ge 1$ we have
\begin{eqnarray}\label{treeeq1}\dim \tilde H_{n-2} ((B_n * \ttn)^-) &=& tA_n(t)\\
\label{treeeq2}\dim \tilde H_{n-2} ((B_n(q) * \ttn)^-) &=&    t A^{\comaj,\exc}_n(q,qt) \\
\label{treeeq3} \ch \tilde H_{n-2} ((B_n * \ttn)^-) &=& t \sum_{j=0}^{n-1} \omega Q_{n,j}t^j.\end{eqnarray}
\end{thm}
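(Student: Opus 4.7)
The plan is to treat the three formulas in Theorem~\ref{treeeq123} uniformly by reducing each to a M\"obius-type computation on the bounded poset $(B_n \ast \ttn)^+$ obtained by adjoining a maximum $\hat 1$, and then exploiting the fact that closed lower order ideals of any vertex in a tree are chains. I first observe that $B_n$, $B_n(q)$, and $\ttn$ are all Cohen-Macaulay posets of length $n$, so by the Bj\"orner-Welker theorem on Rees products of CM posets \cite{bw}, both $B_n \ast \ttn$ and $B_n(q) \ast \ttn$ are Cohen-Macaulay. Hence $(B_n \ast \ttn)^-$ has reduced homology concentrated in the top dimension, and formulas (\ref{treeeq1})-(\ref{treeeq2}) reduce, by Euler-Poincar\'e (equations~(\ref{eupon}),~(\ref{eupon2})), to computing $\mu((B_n \ast \ttn)^+)$ and its $q$-analog; identity~(\ref{treeeq3}) reduces similarly by applying the Sundaram identity~(\ref{sund}) to the $\sg_n$-poset $(B_n \ast \ttn)^+$, which expresses $\ch$ of the top homology as a signed sum of induced Frobenius characteristics over $\sg_n$-orbit representatives on lower intervals.

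The tree structure is then exploited as follows. For any tree $T$ rooted at $r$, the closed lower order ideal $[r, v]_T$ of a vertex $v$ is a chain of length $r_T(v)$; consequently the interval $[(\emptyset, r), (S, v)]$ in $B_n \ast T$ is isomorphic, as a ranked poset (and, in the equivariant case, as a $(\sg_S \times \sg_{[n]\setminus S})$-poset), to $B_{|S|} \ast C'_{r_T(v)}$, where $C'_k = \{0<1<\cdots<k\}$; the analogous identification holds with $B_n$ replaced by $B_n(q)$ and $S$ by a subspace of $\F_q^n$. Grouping the resulting sum by the rank pair $(|S|, r_T(v)) = (a, j)$, and using that $\ttn$ has exactly $t^j$ vertices of rank $j$, collapses the recursive M\"obius formula into
\[
\mu((B_n \ast \ttn)^+) = -\sum_{j=0}^{n} t^j \sum_{a=j}^{n} \binom{n}{a}\, \mu(B_a \ast C'_j),
\]
with parallel expressions in the other two settings (Gaussian binomials $\left[\begin{smallmatrix}n\\a\end{smallmatrix}\right]_q$ and the M\"obius function of $B_a(q) \ast C'_j$ in the $q$-setting, induced Frobenius characteristics from $\sg_a \times \sg_{n-a}$ in the equivariant setting). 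Once the inner sums are evaluated, reindexing $j \mapsto j-1$ will account for the factor of $t$ in the three right-hand sides.

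The main obstacle is thus a key lemma, carried out in Section~\ref{treelemsec}, that computes $\sum_{a=j}^n \binom{n}{a}\,\mu(B_a \ast C'_j)$ and its two refinements in closed form. (Concretely, the dimension statement should read $\sum_{a=j}^n \binom{n}{a}\,\mu(B_a \ast C'_j) = (-1)^{n+1}\,a_{n,j-1}$ for $j \ge 1$, while for $j = 0$ the sum vanishes by the identity $\sum_a \binom{n}{a}(-1)^a = 0$.) The auxiliary posets $B_a \ast C'_j$ have factors of different lengths when $j < a$, so Bj\"orner-Welker CM machinery does not apply to them directly and the lex-shellability approach of~\cite{ShWa2} is not immediately available. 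My preferred route is a double induction on $(a,j)$: apply the recursive M\"obius definition inside $B_a \ast C'_j$ (which itself has a tree-like structure coming from the chain $C'_j$) to extract a recurrence for the inner quantity, and match it against the classical recurrence $a_{n,k} = (k+1)\,a_{n-1,k} + (n-k)\,a_{n-1,k-1}$ for Eulerian numbers, together with its $q$- and character refinements arising from Theorem~\ref{introsymgenth}. Alternatively one can perform binomial inversion and aim at the closed form $(-1)^a \mu(B_a \ast C'_j) = \sum_{b=j}^a \binom{a}{b}\,a_{b,j-1}$ directly, which could be verified bijectively or by generating-function manipulation.
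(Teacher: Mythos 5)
Your decomposition is by \emph{lower} intervals of $(B_n \ast \ttn)^+$, using the fact that lower order ideals in a rooted tree are chains. This is a genuinely different route from the one the paper takes for Theorem~\ref{treeeq123}: the paper decomposes by \emph{upper} intervals, exploiting Proposition~\ref{uniformtree}, which says that the upper interval above any rank-$k$ element of $(B_n\ast \ttn)^+$ is again of the form $(B_{n-k}\ast T_{t,n-k})^+$ --- a Rees product with a \emph{complete tree}, not a chain. This self-similarity closes the M\"obius recursion after a single step,
\[
1+\sum_{k=0}^n\binom{n}{k}[k+1]_t\,\mu_{n-k}(t)=0,\qquad \mu_m(t):=\mu\bigl((B_m\ast T_{t,m})^+\bigr),
\]
which is then solved in closed form via the $q$-exponential formula and Corollary~\ref{comajcor} (and, for the equivariant statement, via Sundaram's identity, Proposition~\ref{gpropuniform}, and Theorem~\ref{introsymgenth}). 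Your lower-interval decomposition produces Rees products of Boolean algebras with chains of varying lengths; these are not of the same type as what you started with, so there is no comparable closure and no recursion in $n$ alone.

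What your decomposition does correctly re-derive is Corollary~\ref{sumcor} applied to $P=B_n$: your inner sum $\sum_{a\ge j}\binom{n}{a}\mu_a^{(j)}$, where $\mu_a^{(j)}$ is the M\"obius function of the interval $[(\emptyset,0),([a],j)]$ in the Rees product of $B_a$ with the chain $\{0<1<\cdots<j\}$ (note that this Rees product is \emph{not} bounded above, since $([a],i)$ is maximal for every $i\le j$, so the shorthand ``$\mu(B_a\ast C'_j)$'' must be read as the M\"obius function of that interval), equals $\mu(\widehat{I_{j-1}(B_n)})$. The gap is that Section~\ref{treelemsec} does \emph{not} evaluate this quantity. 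The Tree Lemma of that section is a reduction running in the opposite logical direction: it recovers $\mu(\widehat{I_{j-1}(B_n)})$ from $\mu((B_n\ast\ttn)^+)$ \emph{after} the latter has been computed, and that computation is precisely the content of Theorem~\ref{treeeq123}. Invoking Section~\ref{treelemsec} to finish your argument would be circular. The recurrence-matching against $a_{n,k}=(k+1)a_{n-1,k}+(n-k)a_{n-1,k-1}$ and the binomial-inversion closed form you sketch are not carried out in the paper, and you would have to supply that entire evaluation yourself, together with its $q$- and equivariant refinements, for your route to go through.
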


\begin{cor}  For all $n\ge 1$ we have
\begin{eqnarray*} \dim \tilde H_{n-2} ((B_n * C_{n+1})^-) &=&n!\\
\dim  \tilde H_{n-2} ((B_n(q) * C_{n+1})^-) &=&   \sum_{\sigma \in \mathfrak S_n } q^{\comaj(\sigma) +\exc(\sigma)} \\
 \ch \tilde H_{n-2} ((B_n *C_{n+1})^-) &=& \sum_{j=0}^{n-1}  \omega Q_{n,j}.\end{eqnarray*}
\end{cor}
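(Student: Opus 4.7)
The plan is to derive this corollary as an immediate specialization of Theorem~\ref{treeeq123} at the parameter value $t=1$. The crucial observation is that when $t=1$, the complete $1$-ary tree $T_{1,n}$ of height $n$ has exactly one child at every non-leaf node, so it degenerates into a single chain with $n+1$ elements; that is, $T_{1,n}=C_{n+1}$. Substituting into the three formulas of Theorem~\ref{treeeq123} therefore yields identities for the Rees products $(B_n\ast C_{n+1})^-$ and $(B_n(q)\ast C_{n+1})^-$.

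For the first identity, I would use the fact that the classical Eulerian polynomial $A_n(t)$ satisfies $A_n(1)=\sum_{\sigma\in\sg_n}1=n!$, so the right hand side of \eqref{treeeq1} becomes $n!$ at $t=1$. For the second, I would observe that specializing \eqref{treeeq2} at $t=1$ gives
\[
A_n^{\comaj,\exc}(q,q)=\sum_{\sigma\in\sg_n}q^{\comaj(\sigma)}q^{\exc(\sigma)}=\sum_{\sigma\in\sg_n}q^{\comaj(\sigma)+\exc(\sigma)},
\]
directly from the definition of the generating polynomial $A_n^{\comaj,\exc}$. For the third, specialization of \eqref{treeeq3} at $t=1$ yields $\sum_{j=0}^{n-1}\omega Q_{n,j}$, matching the desired formula.

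Since each of the three conclusions is obtained by plugging in $t=1$ into the corresponding part of Theorem~\ref{treeeq123}, there is no genuine obstacle to overcome; the only point requiring care is the identification $T_{1,n}=C_{n+1}$, which follows directly from the definition of a complete $t$-ary tree of height $n$. Thus the corollary is immediate.
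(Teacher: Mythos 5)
Your proposal is correct and is exactly the intended argument: the paper states this corollary immediately after Theorem~\ref{treeeq123} with no separate proof, precisely because it is the $t=1$ specialization, and the identification $T_{1,n}=C_{n+1}$ is the only (entirely straightforward) observation needed.
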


To prove (\ref{treeeq1}) and (\ref{treeeq2}), we make use of   two  easy Rees product results.  
A bounded ranked poset $P$ is said to be {\em uniform} if $[x,\hat 1_P] \cong [y,\hat 1_P]$ whenever $r_P(x) = r_P(y)$ (see \cite[Exercise 3.50]{st5}).  We will say that a sequence of posets $(P_0, P_1,  \dots, P_n)$ is uniform if for each $k = 0,1, \dots,n$, the poset
$P_k$ is uniform of  length $k$ and
$$  P_k \cong [x,\hat 1_{P_n}]$$ for  each $x \in P_n$ of rank $n-k$.    The sequences $(B_0,\dots,B_n)$ and
$(B_0(q),\dots,B_n(q))$ are examples of uniform sequences as are the sequences of set partition lattices $(\Pi_0,\dots, \Pi_n)$ and the sequence of face lattices  of  cross polytopes $(\wh {PCP_0},\dots,\wh {PCP_n})$.

The following result is easy to verify.
\begin{prop}  \label{uniformtree} Suppose $P$ is a uniform poset of length $n$. Then for all $t \in {\mathbb P}$,   the poset $R:=(P \ast T_{t,n})^+$ is uniform of length $n+1$.
Moreover, if $x \in P$ and $y \in R$ with $r_P(x) = r_R(y) =k$ then
 $$[y,\hat 1_R ] \cong ([x,\hat 1_P] \ast T_{t,n-k})^+.$$
\end{prop}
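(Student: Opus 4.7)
The plan is to prove the moreover clause by exhibiting an explicit poset isomorphism; uniformity of $R$ then follows at once, since the right-hand side depends only on $k$, not on the choice of $y$. First I would verify that $R$ is ranked of length $n+1$. From the cover description of the Rees product, every cover in $P \ast T_{t,n}$ advances the first coordinate by one cover in $P$, so any saturated chain projects to a saturated chain in $P$; hence every maximal chain of $P \ast T_{t,n}$ has length $n$, and adjoining $\hat 1_R$ contributes the extra step.

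For the isomorphism, write $y=(p,q)$ with $r_P(p)=k$ and $r_T(q)=h$; the Rees product condition on $y$ forces $h \le k$. Choose any $x \in P$ with $r_P(x)=k$, and use uniformity of $P$ to fix a poset isomorphism $\alpha\colon [p,\hat 1_P]\to[x,\hat 1_P]$. The key structural observation on the tree side is that the set of descendants of $q$ in $T_{t,n}$ at relative height at most $n-k$ forms a copy of $T_{t,n-k}$: the subtree of $T_{t,n}$ rooted at $q$ has height $n-h\ge n-k$, and every internal node above the relative-depth-$(n-k)$ cutoff still retains all $t$ of its children, so the truncated subtree is a complete $t$-ary tree of height $n-k$. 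Fix a tree isomorphism $\beta$ from this truncated subtree to $T_{t,n-k}$ sending $q$ to the root.

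Now define $\phi\colon [y,\hat 1_R]\to ([x,\hat 1_P]\ast T_{t,n-k})^+$ by $\phi(\hat 1_R) := \hat 1$ of the target and $\phi(p',q') := (\alpha(p'),\beta(q'))$ for $(p',q')\ge y$ in $P \ast T_{t,n}$. For well-definedness, the relation $(p',q') \ge (p,q)$ yields $r_P(p')-k \ge r_T(q')-h$, and combined with $r_P(p')\le n$ this gives $r_T(q')-h \le n-k$, so $q'$ lies in the truncated subtree where $\beta$ is defined; the same inequality, after applying $\alpha$ and $\beta$ (which preserve relative rank by construction), is exactly the Rees condition in $[x,\hat 1_P]\ast T_{t,n-k}$. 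Bijectivity is then clear, and order preservation in both directions reduces to matching the three defining relations (order in $P$, order in $T_{t,n}$, rank inequality) with the corresponding three in the target. The only real obstacle is bookkeeping with the three ambient rank functions; once one records that $\alpha$ and $\beta$ translate ranks by the shifts $-k$ and $-h$, respectively, each condition transports cleanly.
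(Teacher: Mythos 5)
Your proof is correct. The paper states this proposition without proof, merely calling it ``easy to verify,'' and your explicit isomorphism $\phi(p',q') = (\alpha(p'),\beta(q'))$ --- with $\alpha$ supplied by uniformity of $P$ and $\beta$ given by truncating the subtree of $T_{t,n}$ rooted at $q$ at relative height $n-k$ --- is the natural direct verification the authors evidently had in mind; the rank bookkeeping you carry out (the shifts by $k$ and $h$, and the bound $r_T(q')-h\le n-k$ forced by $r_P(p')\le n$ together with the Rees inequality) is exactly what is needed and is handled correctly.
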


\begin{prop}\label{propuniform} Let $(P_0, P_1,  \dots, P_n)$ be a uniform sequence of  posets.
Then for all $t \in {\mathbb P}$,
\begin{equation} \label{uniformrec}  1+\sum_{k=0}^n W_k(P_n) [k+1]_t \mu((P_{n-k}*T_{t,n-k})^+) =0,\end{equation} where    $W_k(P)$ is the number of elements of rank $k$ in $P$.
\end{prop}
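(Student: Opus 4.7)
My approach is to apply the recursive definition of the Möbius function to the bounded poset $R := (P_n \ast \ttn)^+$, grouping the terms by rank and using Proposition~\ref{uniformtree} to identify each upper interval $[y,\hat 1_R]$ with a smaller poset in the family. Since $R$ has length $n+1 \ge 1$, the standard Möbius recursion yields
\[
0 \;=\; \sum_{y \in R} \mu_R(y,\hat 1_R) \;=\; 1 \;+\; \sum_{y \ne \hat 1_R} \mu_R(y,\hat 1_R),
\]
where the isolated $1$ comes from the term $y = \hat 1_R$. The task then reduces to organizing the remaining terms according to the rank of $y$.

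First I would record that the rank of $y = (a,z)$ in $P_n \ast \ttn$ equals $r_{P_n}(a)$, which is immediate from the cover description of the Rees product (every cover increases the first coordinate by one rank). Since $\ttn$ has exactly $t^j$ nodes at level $j$, the number of pairs $(a,z)$ with $r_{P_n}(a) = k$ and $r_{\ttn}(z) \le k$ is
\[
W_k(P_n)\,(1 + t + t^2 + \cdots + t^k) \;=\; W_k(P_n)\,[k+1]_t.
\]

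Next, for any such $y$ of rank $k$, Proposition~\ref{uniformtree} identifies $[y,\hat 1_R]$ with $([a,\hat 1_{P_n}] \ast T_{t,n-k})^+$, and uniformity of the sequence $(P_0,\ldots,P_n)$ gives $[a,\hat 1_{P_n}] \cong P_{n-k}$. Hence $\mu_R(y,\hat 1_R) = \mu((P_{n-k} \ast T_{t,n-k})^+)$, a quantity depending only on the rank $k$. Collecting contributions in the Möbius recursion by rank produces exactly \eqref{uniformrec}. No genuine obstacle is anticipated here: the proposition amounts to the bookkeeping that turns the Möbius recursion on $R$ into the claimed identity, and all structural input has already been prepared in Proposition~\ref{uniformtree}.
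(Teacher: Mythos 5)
Your proof is correct and follows essentially the same route as the paper's: both expand the Möbius recursion at $\hat 1_R$ in $R = (P_n \ast \ttn)^+$ (i.e., the recursion applied to the dual of $R$), invoke Proposition~\ref{uniformtree} together with uniformity of the sequence to identify each upper interval $[y,\hat 1_R]$ with $(P_{n-k}\ast T_{t,n-k})^+$, and count $W_k(R) = W_k(P_n)[k+1]_t$. The only difference is that you spell out the rank-counting that the paper dismisses with ``Clearly.''
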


\begin{proof}
Let $R:= (P_n \ast T_{t,n})^+$ and  let $y$ have rank $k$ in $R$. By Proposition~\ref{uniformtree}, $$\mu_R(y, \hat 1_R) =\mu((P_{n-k}*T_{t,n-k})^+) . $$
Clearly $$W_k(R) = W_k(P_n) [k+1]_t$$  for all $0\le k \le n$. Hence (\ref{uniformrec}) is just the  recursive
definition of the M\"obius function applied to the dual of $R$.
\end{proof}

 To prove (\ref{treeeq1}) either take dimension in (\ref{treeeq3})  or  set $q=1$ in the proof of (\ref{treeeq2}) below.

 \begin{proof}[Proof of (\ref{treeeq2})]     We  apply Proposition~\ref{propuniform} to the uniform sequence $(B_0(q)$, $B_1(q)$, \dots, $B_n(q))$. The number of $k$-dimensional subspaces of $\F_q^n$ is given by  $$W_k(B_n(q))= \left[\begin{array}{c} n
 \\ k\end{array}\right]_q.$$  Write $\mu_n(q,t)$ for $\mu((B_n(q) \ast \ttn)^+)$. Hence by Proposition~\ref{propuniform},
  \begin{equation} \label{recbn2q}
\sum_{k=0}^n  \left[\begin{array}{c} n \\k\end{array}\right]_q [k+1]_t\mu_{n-k}(q,t)=-1.
\end{equation}

Setting
\[
F_{q,t}(z):=\sum_{j \geq 0}\mu_j(q,t)\frac{z^j}{[j]_q!}
\]
and
\[
G_{q,t}(z):=\sum_{k \geq 0}[k+1]_t\frac{z^k}{[k]_q!},
\]
we derive from (\ref{recbn2q}) that
\begin{equation} \label{gbnq}
F_{q,t}(z)=-\exp_q(z)G_{q,t}(z)^{-1}.
\end{equation}
If we assume $t >1$ we have
\begin{eqnarray*}
G_{q,t}(z) & = & \frac{1}{1-t}\sum_{k \geq 0}(1-t^{k+1})
\frac{z^k}{[k]_q!}
\\ & = & \frac{\exp_q(z)-t\exp_q(tz)}{1-t}.
\end{eqnarray*}
We calculate that
$$F_{q,t}(-z) = -(1-t) - t  \frac{(1-t)\exp_q(-tz)}{\exp_q(-z)-t\exp_q(-tz)}.$$
Using the fact that $\exp_q(-z) \Exp_q(z) = 1$, we have
$$F_{q,t}(-z) = -(1-t) - t  \frac{(1-t)\Exp_q(z)}{\Exp_q(tz)-t\Exp_q(z)}.$$
It now follows from 
Corollary~\ref{comajcor} that for all $n \ge 1$ and $t >1$,
\begin{equation}\label{mueq}\mu_n(q,t) = (-1)^{n-1} t \sum_{\sigma \in \mathfrak S_n} q^{\comaj(\sigma) + \exc(\sigma)} t^{\exc(\sigma)}.\end{equation}

One can see from (\ref{recbn2q}) and induction that  $\mu_n(q,t)$ is a polynomial in $t$.
Hence since (\ref{mueq}) holds for infinitely many integers $t$, it
holds as an identity of polynomials, which implies that it holds for $t=1$.

Since the poset $(B_n(q)*\ttn)^-$ is Cohen-Macaulay, equation (\ref{treeeq2}) holds.
  \end{proof}

  We say that a bounded ranked $G$-poset $P$ is $G$-uniform  if the following holds,
\begin{itemize}
\item  $P$ is uniform
\item $G_x \cong G_y$ for all $x,y \in P$ such that $r_P(x) = r_P(y)$
\item there is an isomorphism between $[x, \hat 1_P]$ and $[y, \hat 1_P]$ that intertwines the  actions of $G_x$ and $G_y$ for all $x,y \in P$ such that $r_P(x) = r_P(y)$.  We will write
$$ [x, \hat 1_P] \cong_{G_x,G_y}[y, \hat 1_P].$$
\end{itemize}
Given a sequence of groups $G=(G_0, G_1, \dots, G_n)$.  We say that a sequence of posets
$(P_0, P_1, \dots, P_n)$ is $G$-uniform if
\begin{itemize}
\item  $P_k$ is $G_k$-uniform of length $k$ for each $k$
\item  $G_k \cong (G_n)_x$ and $P_k \cong_{G_k,(G_n)_x} [x,\hat 1_{P_n}]$  whenever $r_{P_n}(x) = n-k$.
\end{itemize}
For example, the sequence $(B_0,B_1,\dots , B_n)$ is $(\mathfrak S_0\times \mathfrak S_n,\mathfrak S_1\times \mathfrak S_{n-1},\dots,\mathfrak S_n\times \mathfrak S_0)$-uniform, where the action of $\mathfrak S_i \times \mathfrak S_{n-i}$  on $B_i$  is given by $$(\sigma,\tau) \{a_1,\dots, a_s\} = \{\sigma(a_1),\dots, \sigma(a_s)\}$$ for $\sigma \in \mathfrak S_i ,\, \tau \in  \mathfrak S_{n-i}$ and $\{a_1,\dots, a_s\} \in B_i$.  In other words $\mathfrak S_i$ acts on subsets of $[i]$ in the usual way and $\mathfrak S_{n-i}$ acts trivially.

The following proposition is easy to verify.
\begin{prop}[Equivariant version of  Proposition \ref{uniformtree}] \label {guniformtree} Suppose $P$ is a $G$-uniform poset of length $n$. Then for all $t \in {\mathbb P}$,  the $G$-poset $R:=(P \ast T_{t,n})^+$ is $G$-uniform of length $n+1$.
Moreover, if $x \in P$ and $y \in R$ with $r_P(x) = r_R(y) =k$ then
 $$[y,\hat 1_R ] \cong_{G_y,G_x} ([x,\hat 1_P] \ast T_{t,n-k})^+.$$
\end{prop}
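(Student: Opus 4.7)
The plan is to upgrade Proposition~\ref{uniformtree} to its $G$-equivariant form, using the fact that $G$ acts on $R := (P \ast T_{t,n})^+$ only through the first coordinate: $g \cdot (p, s) = (gp, s)$ and $g \cdot \hat 1_R = \hat 1_R$. This immediately gives $G_{(p,s)} = G_p$ for every $(p,s) \ne \hat 1_R$, and $G_{\hat 1_R} = G$.

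First I would check that $R$ is $G$-uniform. The underlying poset is uniform of length $n+1$ by Proposition~\ref{uniformtree}. The stabilizer condition is automatic: the unique element of rank $n+1$ is $\hat 1_R$, and for rank $k \leq n$ every element of $R$ has the form $(p,s)$ with $r_P(p)=k$ and stabilizer $G_p$, so the $G$-uniformity of $P$ at rank $k$ transfers to $R$ at rank $k$. The equivariant interval isomorphisms required by the $G$-uniform condition will follow from the ``moreover'' clause.

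For the ``moreover'' clause, fix $y = (p, s) \in R$ of rank $k$, so $r_P(p)=k$. The natural poset isomorphism $\Phi \colon [y, \hat 1_R] \to ([p, \hat 1_P] \ast T_{t, n-k})^+$ underlying Proposition~\ref{uniformtree} is given by $(p', s') \mapsto (p', \psi(s'))$ (and $\hat 1_R \mapsto \hat 1$), where $\psi$ identifies the relevant initial segment of the subtree rooted at $s$ with $T_{t,n-k}$; since this only involves the second coordinate and $G$ acts as the identity there on both sides, $\Phi$ is $G_p$-equivariant. Thus $[y, \hat 1_R] \cong_{G_y, G_p} ([p, \hat 1_P] \ast T_{t, n-k})^+$. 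Given any $x \in P$ of rank $k$, the $G$-uniformity of $P$ supplies $[p, \hat 1_P] \cong_{G_p, G_x} [x, \hat 1_P]$, and extending by the identity on the tree factor and on $\hat 1$ yields a $G$-equivariant isomorphism of Rees products. Composing with $\Phi$ produces the required
\[
[y, \hat 1_R] \cong_{G_y, G_x} ([x, \hat 1_P] \ast T_{t, n-k})^+.
\]

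The only point requiring any care is the identification, already embedded in Proposition~\ref{uniformtree}, of the rank-$(n-k)$-truncation of the subtree above $s$ in $T_{t,n}$ with $T_{t,n-k}$; beyond that, the $G$-action never touches the tree coordinate, so equivariance is automatic and the authors' characterization of the proposition as ``easy to verify'' is accurate.
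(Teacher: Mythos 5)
Your proof is correct and is precisely the "easy to verify" argument the paper leaves implicit: the $G$-action lives entirely in the first coordinate, so stabilizers and equivariance of the interval isomorphisms transfer directly from $P$ to $R$, with the tree coordinate handled by the fixed identification of the truncated subtree above $s$ with $T_{t,n-k}$.
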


If $(P_0, P_1,  \dots, P_n)$ is a $(G_0,G_1,\dots,G_n)$-uniform sequence of  posets, we can view
$G_k $ as a subgroup of $G_{n}$ for each $k=0,\dots,n$. For $G$-uniform poset $P$, let $ W_k(P;G)$ be the number of $G$-orbits of  the  rank $k$ elements of $P$.
 The Lefschetz character of  a $G$-poset
$P$ of length $n\ge 0$  is defined to be the virtual representation $$L(P;G) := \bigoplus_{j=0}^{n} (-1)^j \tilde H_j(P).$$
Note that by (\ref{eupon}) the dimension of the Lefschetz character $L(P;G)$  is precisely $\mu(\hat P)$.

\begin{prop}[Equivariant version of Proposition \ref{propuniform}] \label{gpropuniform} Let $(P_0, P_1,  \dots, P_n)$ be a $(G_0,G_1,\dots,G_n)$-uniform sequence of  posets.
Then for all $t \in {\mathbb P}$,
\begin{equation} \label{guniformrec}  1_{G_n}\oplus \bigoplus_{k=0}^n W_k(P_n;G_n) [k+1]_t L((P_{n-k}*T_{t,n-k})^-;G_{n-k}) \uparrow^{G_n}_{G_{n-k}}=0.\end{equation} \end{prop}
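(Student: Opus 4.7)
The plan is to play the equivariant analog of the M\"obius recursion used in the proof of Proposition~\ref{propuniform}, replacing that recursion by Sundaram's identity~(\ref{sund}) applied to the dual of $R := (P_n \ast T_{t,n})^+$. Because dualizing $R$ turns each lower open interval $(\hat 0_{R^*},y)$ into the upper open interval $(y,\hat 1_R)$ in $R$, the alternating sum identity becomes, after packaging the homologies into Lefschetz characters,
\begin{equation*}
\bigoplus_{y \in R/G_n} L((y,\hat 1_R);(G_n)_y) \uparrow_{(G_n)_y}^{G_n} \cong_{G_n} 0,
\end{equation*}
subject to the conventions $L((\hat 1_R,\hat 1_R);G_n)=1_{G_n}$ and $L(\emptyset;(G_n)_y)=-1_{(G_n)_y}$ whenever $y$ is covered by $\hat 1_R$, which track the stated conventions on $\tilde H_{r-2}((\hat 0,x))$ in~(\ref{sund}).

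I would then partition this sum according to the $R$-rank of the orbit representative $y$. The singleton orbit $\{\hat 1_R\}$ contributes the summand $1_{G_n}$ appearing in the statement. For $y$ of rank $k \in \{0,1,\dots,n\}$, Proposition~\ref{guniformtree} identifies the closed interval $[y,\hat 1_R]$, as a $(G_n)_y$-poset, with $(P_{n-k} \ast T_{t,n-k})^+$ as a $G_{n-k}$-poset, and hence $(y,\hat 1_R) \cong_{(G_n)_y,G_{n-k}} (P_{n-k} \ast T_{t,n-k})^-$. Each such orbit therefore contributes $L((P_{n-k} \ast T_{t,n-k})^-;G_{n-k}) \uparrow_{G_{n-k}}^{G_n}$, invoking the prescribed embedding of $G_{n-k}$ into $G_n$. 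The degenerate case $k=n$, in which the interval is empty, is consistent with the covered-by-$\hat 1_R$ convention.

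It remains to count rank-$k$ orbits of $R$ under $G_n$. Since $G_n$ acts trivially on $T_{t,n}$, the orbit of $(p,q)\in P_n \ast T_{t,n}$ with $r_{P_n}(p)=k$ is $(G_n\cdot p)\times\{q\}$, while $q$ ranges freely over the $[k+1]_t = 1+t+\cdots+t^k$ elements of $T_{t,n}$ of rank at most $k$. Hence the number of rank-$k$ orbits equals $W_k(P_n;G_n)\cdot[k+1]_t$, and collecting all contributions yields the desired identity.

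The principal obstacle is bookkeeping at the equivariant level: the stabilizers $(G_n)_y$ of different rank-$k$ orbit representatives must be matched coherently with the single fixed subgroup $G_{n-k}\le G_n$, so that the induced $G_n$-modules arising from different orbits at the same rank genuinely combine into a single multiplicity $W_k(P_n;G_n)\cdot[k+1]_t$. This is precisely what the intertwining clause of the $G$-uniform definition supplies, combined with the paper's convention of viewing each $G_k$ as a subgroup of $G_n$; once this is unpacked, no further analytic content is needed.
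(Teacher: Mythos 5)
Your proposal is correct and follows essentially the same route as the paper: apply Sundaram's identity to the dual of $R=(P_n*T_{t,n})^+$ to obtain the equivariant M\"obius recursion $\bigoplus_{y\in R/G_n}L((y,\hat 1_R);(G_n)_y)\uparrow_{(G_n)_y}^{G_n}=0$, then use Proposition~\ref{guniformtree} to identify each rank-$k$ summand with $L((P_{n-k}*T_{t,n-k})^-;G_{n-k})\uparrow_{G_{n-k}}^{G_n}$, and finish by the orbit count $W_k(R;G_n)=W_k(P_n;G_n)[k+1]_t$. The paper's proof is more terse but contains exactly these three steps, and your bookkeeping of the degenerate cases ($\hat 1_R$ and rank-$n$ elements) matches the stated conventions.
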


\begin{proof}
Sundaram's equation (\ref{sund}) applied to the dual of a $G$-poset $P$ is equivalent to the following equivariant version of the recursive definition of the M\"obius function:
\begin{equation} \label{equimob} \bigoplus_{y \in P/G} L((y,\hat 1_P);G_y) \uparrow_{G_y}^G = 0, \end{equation}
where $L((y,\hat 1_P);G_y)$ is the trivial representation if $y=\hat 1_P$ and is the negative of the trivial representation if $ y$ is covered by $\hat 1_P$.  We apply (\ref{equimob}) to the $G_n$-uniform poset $R:= (P_n \ast T_{t,n})^+$.   Let $y$ have rank $k$ in $R$.
It follows from Proposition~\ref{guniformtree} that
$$L((y, \hat 1_R) ;(G_{n})_{y})\uparrow_{(G_{n})_{y}}^{G_n}\cong L((P_{n-k}*T_{t,n-k})^- ;G_{n-k})\uparrow_{G_{n-k}}^{G_n}.$$
Clearly, $$W_k(R;G_n) =  W_k(P_n;G_n) [k+1]_t $$ for all $ k $.  Thus  (\ref{guniformrec}) follows from (\ref{equimob}).
\end{proof}

   \begin{proof}[Proof of (\ref{treeeq3})] Now we apply Proposition~\ref{gpropuniform} to the  $(\mathfrak S_0\times \mathfrak S_n,\mathfrak S_1\times \mathfrak S_{n-1},\dots,\mathfrak S_n\times \mathfrak S_0)$-uniform sequence $(B_0, B_1, \dots, B_n)$. Let  $$L_n(t):= \ch \,L((B_{n}*T_{t,n})^-;\mathfrak S_{n}).$$ Clearly $W_k(B_n;\mathfrak S_n) = 1$.  Therefore by Proposition~\ref{gpropuniform},  \begin{equation} \label{grecbn2}
\sum_{k=0}^n  [k+1]_t h_k L_{n-k}(t)=-h_n.
\end{equation}

 Setting
\[
F_t(z):=\sum_{j \geq 0}L_j(t){z^j}
\]
and
\[
G_t(z):=\sum_{k \geq 0}[k+1]_t h_k {z^k}
\]
we derive from (\ref{grecbn2}) that
\begin{equation} \label{gbn}
F_t(z)G_t(z) =- H(z).
\end{equation}
Now if $t >1$,
\begin{eqnarray*}
G_t(z) & = & \frac{1}{1-t}\sum_{k \geq 0}(1-t^{k+1}) h_k
{z^k}
\\ & = & \frac{H(z)-tH(tz)}{1-t},
\end{eqnarray*}
and we thus have
\begin{equation} \label{gqbn}
F_t(z)=-\frac{(1-t)H(z)}{H(z)-tH(tz)}.
\end{equation}
We calculate that
\begin{equation} \label{gqp}
F_t(-z)=-(1-t) - t\frac{(1-t)H(-tz)}{H(-z) - t H(-tz)}.
\end{equation}
 Using the fact that $H(-z)E(z) = 1$ we have
$$F_t(-z)=-(1-t) - t\frac{(1-t)E(z)}{E(tz)-tE(z)}.$$
By applying the standard symmetric function involution $\omega$, we obtain
$$ \omega F_t(-z) = -(1-t) - t\frac{(1-t)H(z)}{H(tz)-tH(z)}.$$
It follows from this and Theorem~\ref{introsymgenth} that for all $n \ge 1$ and $t >1$,
\begin{equation} \label{gnewpr}
\omega L_n(t)=(-1)^{n-1}t \sum_{j=0}^{n-1} Q_{n,j} t^j
\end{equation}

By (\ref{grecbn2}) and induction, $L_n(t)$ is a polynomial in $t$.  Hence (\ref{gnewpr}) holds for $t=1$ as well.
Since $(B_n*\ttn)^-$ is Cohen-Macaulay we are done.
   \end{proof}

\section{The tree lemma}  \label{treelemsec}
The following result and Theorem~\ref{treeeq123} are all that is needed to  prove  Theorems~\ref{bncn} and~\ref{bncnq}, since $B_n$ and $B_n(q)$ are self-dual and Cohen-Macaulay.

\begin{thm}[Tree Lemma] \label{tree}
Let $P$ be a bounded, ranked poset of length $n$.  Then for all $t
\in {\mathbb P}$,
\begin{equation} \label{treeeq}
\sum_{j=1}^{n}\mu(\wh{I_{j-1}(P)}) t^j = - \mu((P^**\ttn)^+),
\end{equation}
where $P^\ast$ is the dual of $P$.
\end{thm}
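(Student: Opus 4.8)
The plan is to compute $\mu((P^\ast \ast T_{t,n})^+)$ by applying the recursive definition of the M\"obius function to the poset $R := (P^\ast \ast T_{t,n})^+$, starting from its maximum element $\hat 1_R$, and to match the contributions of the various elements of $R$ against the left-hand side of \eqref{treeeq}. First I would recall that the maximal elements of $P^\ast \ast T_{t,n}$ are the pairs $(\hat 0_{P^\ast}, \ell)$ where $\ell$ ranges over the $t^n$ leaves of $T_{t,n}$ (note $\hat 0_{P^\ast} = \hat 1_P$ has rank $n$ in $P^\ast$, so it can be paired with any node of $T_{t,n}$, in particular any leaf at depth $n$); each such pair is covered by $\hat 1_R$. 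More generally, an element $(x, v) \in P^\ast \ast T_{t,n}$ with $r_{P^\ast}(x) \geq r_{T}(v)$ corresponds under the order-reversing identification $P^\ast \leftrightarrow P$ to a pair with $x \in P$ of corank $r_{P^\ast}(x) = n - r_P(x)$.

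The key structural step is to understand the intervals $[(x,v), \hat 1_R]$. By the same reasoning as in Proposition~\ref{uniformtree}, the open interval $((x,v), \hat 1_R)$ is isomorphic to $(([x,\hat 1_{P^\ast}] \ast T_{t,n-r})^+)$ minus its two bounds, where $r = r_{T}(v)$; and $[x, \hat 1_{P^\ast}]$ is (dual to) the open-or-closed principal order ideal structure of $P$ below the image of $x$. The crucial case is when $x = \hat 0_{P^\ast} = \hat 1_P$, i.e. when the first coordinate is already maximal in $P^\ast$: then $[x,\hat 1_{P^\ast}]$ is a single point, and the interval above $(x,v)$ in $R$ only sees the tree, contributing a controlled M\"obius value (the tree below a node of depth $r$ has $\mu$ of its one-point-extended poset equal to something like $(-1)$ or $0$ — in fact for a tree all these M\"obius values are easy). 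The interesting elements are those of the form $(\hat 1_P, v) \cong (\hat 0_{P^\ast}, v)$: grouping these by the depth $j-1 := r_T(v)$ of $v$, there are $t^{j-1}$ such nodes when... — actually I would instead group all elements $(x,v)$ with $v$ a fixed node and let the $P$-coordinate vary, recognizing $[(x,v),\hat 1_R]$ as built from $I_{j-1}(P)$-type ideals. The M\"obius recursion $\sum_{z \geq (x,v)} \mu_R((x,v), z) = 0$ applied at $(x,v) = (\hat 0_{P^\ast}, \text{root})$ should, after collecting terms according to the tree-depth $j$ of the $T$-coordinate and using that each depth-$j$ node has $t$ as the relevant multiplicity factor summed over a level, produce exactly $\sum_{j=1}^n \mu(\wh{I_{j-1}(P)}) t^j + \mu((P^\ast \ast T_{t,n})^+) = 0$, which is \eqref{treeeq}. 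The $t^j$ arises because the interval above $(\hat 0_{P^\ast}, v)$ for $v$ at depth $n-j+1$... — more precisely, I expect the generating-function bookkeeping to pair the $t$-power with the number of nodes at a given level divided out against the uniform branching, leaving a clean $t^j$.

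Concretely, I would: (1) set up the order-reversing bijection so that elements of $P^\ast \ast T_{t,n}$ are indexed by $(y, v)$ with $y \in P$, $v \in T_{t,n}$, and $r_P(y) \leq n - r_T(v)$; (2) identify, for each leaf-to-root path, the subposet of $R$ lying above $(\hat 0_{P^\ast}, \text{root})$ and decompose the M\"obius recursion there; (3) observe that summing $\mu_R((\hat 0_{P^\ast},\text{root}), z)$ over all $z \neq (\hat 0_{P^\ast},\text{root})$ and regrouping by where $z$ sits relative to the fiber structure yields terms $\mu(\wh{I_{j-1}(P)})$ weighted by $t^j$; (4) conclude by the recursive definition of $\mu$. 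The main obstacle I anticipate is step (2)–(3): correctly tracking how an open interval $((\hat 0_{P^\ast}, v), \hat 1_R)$ decomposes — it is not literally $I_{j-1}(P)$ but rather something whose M\"obius invariant, after the tree contributes its multiplicative factor and after summing over the $t$ children at each branching, collapses to $\mu(\wh{I_{j-1}(P)})$. Getting the index shift ($j$ versus $j-1$, and the exponent on $t$) exactly right, and verifying the edge cases $j=1$ (where $I_0(P)$ should be interpreted as empty or a point so that $\mu(\wh{I_0(P)}) = -1$ or similar) and the topmost level, will require care. Once the combinatorial identification of intervals is pinned down, the rest is a direct application of $\sum \mu = 0$.
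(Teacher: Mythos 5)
Your top-level framework matches the paper's: apply the recursive definition of $\mu$ at the minimum $(\hat 0_{P^\ast}, \hat 0_T) = (\hat 1_P, \hat 0_T)$ of $(P^\ast \ast T_{t,n})^+$, group the summands by the tree-depth $j$ of the $T_{t,n}$-coordinate, and note that the $t^j$ nodes at depth $j$ all contribute the same amount (and the depth-$0$ stratum contributes zero). That much is correct and is exactly what the paper does.

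However, there are two problems. First, a direction confusion: the downward recursion $\sum_{z}\mu_R(\hat 0_R, z)=0$ that you write in step (3) requires understanding the intervals $[\hat 0_R, z]$, yet your ``key structural step'' analyzes the upper intervals $[(x,v),\hat 1_R]$ and invokes Proposition~\ref{uniformtree}, which describes upper intervals; this is the wrong direction for the recursion you are actually running. Second, and decisively, you have no mechanism for identifying the depth-$j$ contribution with $\mu(\wh{I_{j-1}(P)})$, and you explicitly flag this as ``the main obstacle I anticipate.'' This is precisely the heart of the paper's proof, packaged as Corollary~\ref{sumcor}. That corollary rests on two lemmas you do not have: Lemma~\ref{qi}, the isomorphism $R_i^+(P)\cong I_{i-1}(P)^+$ obtained by shifting the chain coordinate $(a,x_j)\mapsto(a,j-1)$; and, crucially, Lemma~\ref{antii}, the anti-isomorphism $\psi_i\colon R_i(P)\to R_i(P^\ast)$ given by $\psi_i((a,x_j))=(a,x_{i-j})$, which reverses the chain coordinate relative to the top. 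This is where the dual $P^\ast$ actually earns its keep: not as a mere re-indexing of ranks, but as a genuine order-reversing symmetry of the truncated Rees product with a chain. Two applications of the M\"obius recursion on bounded posets (one to $\wh{I_{i-1}(P)}$, one to $R_i(P)$) plus this anti-isomorphism convert $\mu(\wh{I_{i-1}(P)})$ into $\sum_{(a,x_i)\in R_i(P^\ast)}\mu_{R_i(P^\ast)}((\hat 1_P,x_0),(a,x_i))$, which is exactly what the depth-$j$ stratum of the tree produces. Without this duality argument the proof does not close; so while your skeleton is right, the essential content is missing.
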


Before we can prove Theorem~\ref{tree}, we need a few lemmas.
  Set
\[
R(P):=P \ast\{x_0<x_1<\ldots <x_n\}
\]
and for $i \in [n]$, let $R_i(P)$ be the closed lower order ideal in $R(P)$ generated
by $(\hat 1_P,x_i)$.
Set
\[
R_i^+(P):=\{(a,x_j) \in R_i(P):j>0\}
\]
and
\[
R_i^-(P):=R_i(P) \setminus R_i^+(P).
\]

\begin{lemma}
The posets $R_i^+(P)$ and $I_{i-1}(P)^+ $ are
isomorphic. \label{qi}
\end{lemma}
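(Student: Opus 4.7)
The plan is to produce an explicit bijection $\phi\colon R_i^+(P)\to I_{i-1}(P)^+$ and check that it respects the two poset orders. Unpacking the Rees product definition, an element $(a,x_j)\in R(P)$ lies in $R_i^+(P)$ iff $1\le j\le i$ and $r_P(a)\ge n+j-i$. Similarly, an element $(b,k)\in P^-\ast C_n$ lies in $I_{i-1}(P)$ iff $0\le k\le i-1$, $b\ne \hat 0_P$, $r_P(b)\ge n+k-i+1$, and $(b,k)\ne(\hat 1_P,i-1)$. Comparing these descriptions, the substitution $k=j-1$ identifies the rank inequalities exactly, and the excluded generator $(\hat 1_P,i-1)$ corresponds to the pair $(\hat 1_P,x_i)$, which is the unique maximum of $R_i^+(P)$ because the constraint $r_P(a)\ge n$ forces $a=\hat 1_P$.

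This motivates defining $\phi(a,x_j):=(a,j-1)$ for $(a,x_j)\ne(\hat 1_P,x_i)$ and sending $(\hat 1_P,x_i)$ to the adjoined maximum $\hat 1$ of $I_{i-1}(P)^+$. I would verify well-definedness by direct substitution---noting in particular that $r_P(a)\ge n+j-i\ge 1$ when $j\ge 1$ and $i\le n$, so that $a\ne\hat 0_P$ as required by $P^-$---and establish bijectivity via the obvious inverse $(b,k)\mapsto(b,x_{k+1})$ on $I_{i-1}(P)$, together with the pairing of the two maxima.

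For order preservation, the Rees product order relations translate identically under the shift: $(a,x_j)\le(b,x_\ell)$ in $R_i^+(P)$ unpacks to $a\le_P b$, $j\le\ell$, and $r_P(b)-r_P(a)\le\ell-j$, while $(a,j-1)\le(b,\ell-1)$ in $I_{i-1}(P)$ unpacks to $a\le_P b$, $j-1\le\ell-1$, and the very same rank inequality. Moreover, the maximum $(\hat 1_P,x_i)$ dominates every other element of $R_i^+(P)$ (the three defining inequalities are all automatic), mirroring the fact that the adjoined $\hat 1$ dominates all of $I_{i-1}(P)^+$. There is no real obstacle beyond bookkeeping: one only needs to keep straight the index shift between the chain $\{x_0<\dots<x_n\}$ of length $n+1$ appearing in $R(P)$ and the chain $C_n$ of length $n$ underlying $I_{i-1}(P)$, together with the single edge case $(\hat 1_P,x_i)$ that corresponds to the adjoined maximum.
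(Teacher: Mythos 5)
Your map $(a,x_j)\mapsto(a,j-1)$ is exactly the isomorphism the paper uses, so the approach is the same; the paper simply states the map without unpacking it. However, your explicit characterization of $R_i^+(P)$ has the rank inequality running the wrong way, and this traces back to a sign typo in the paper's displayed Rees-product order relation. The third condition should read $r_P(p_2)-r_P(p_1)\geq r_Q(q_2)-r_Q(q_1)$ (so that rank increase in $P$ dominates rank increase in $Q$), which is what the cover-relation description immediately after requires: a single cover bumps the $P$-rank by $1$ and the $Q$-rank by $0$ or $1$. With the correct inequality, membership $(a,x_j)\in R_i^+(P)$ is equivalent to $1\le j\le i$ together with $j\le r_P(a)\le n-i+j$ (the lower bound comes from the Rees-product support condition, the upper bound from being below $(\hat 1_P,x_i)$), not $r_P(a)\ge n+j-i$.

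This flip does not affect the validity of the isomorphism: the same rank window $j\le r_P(a)\le n-i+j$ appears verbatim on the $I_{i-1}(P)$ side after the substitution $k=j-1$, so the bijection and the order-preservation check go through unchanged. But it does invalidate your particular justification that $(\hat 1_P,x_i)$ is the \emph{only} element with $j=i$ --- under the correct inequality there are many elements $(a,x_i)$ with $i\le r_P(a)\le n$. Fortunately no such argument is needed: $R_i(P)$ is by definition the closed principal order ideal generated by $(\hat 1_P,x_i)$, so that element is its maximum (and lies in $R_i^+(P)$ since $i\ge 1$), and it is sent to the adjoined top $\hat 1$ of $I_{i-1}(P)^+$, identified with the generator $(\hat 1_P,i-1)$ of the open ideal $I_{i-1}(P)$.
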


\begin{proof}
The map that sends $(a,x_j)$ to $(a,j-1)$ is an isomorphism.
\end{proof}

  An {\it antiisomorphism} from poset $X$ to a poset $Y$ is
an isomorphism $\psi$ from $X$ to $Y^*$.  In other words, $\psi$
is an order reversing bijection from $X$ to $Y$ with order
reversing inverse.

\begin{lemma}
For $0 \leq i \leq n$, the map $\psi_i:R_i(P) \rightarrow
R_i(P^\ast)$ given by $\psi_i((a,x_j))=(a,x_{i-j})$ is an
antiisomorphism. \label{antii}
\end{lemma}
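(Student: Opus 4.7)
The plan is to verify three things in sequence: (i) $\psi_i$ maps $R_i(P)$ into $R_i(P^\ast)$, (ii) $\psi_i$ is a bijection, and (iii) $\psi_i$ is order-reversing, with order-reversing inverse. All three boil down to unwinding the three Rees-product conditions and exploiting the rank identity $r_{P^\ast}(a) = n - r_P(a)$.

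First I would describe $R_i(P)$ explicitly as the set of pairs $(a,x_j)$ with $0 \le j \le i$ and $j \le r_P(a) \le n-i+j$: the lower bound $j \le r_P(a)$ is the defining condition for membership in $P \ast C_{n+1}$, while the upper bound comes from the rank condition in the covering relation $(a,x_j) \le (\hat 1_P, x_i)$, namely $r_P(\hat 1_P) - r_P(a) \ge i - j$. Replacing $P$ by $P^\ast$ and $j$ by $i-j$, one checks that the corresponding constraints on $(a,x_{i-j})$ are $i-j \le r_{P^\ast}(a) \le n - j$, which translate under $r_{P^\ast}(a) = n - r_P(a)$ back into $j \le r_P(a) \le n-i+j$. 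So well-definedness is automatic.

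For (ii), since $\psi_i \circ \psi_i$ is the identity on the underlying set (noting that $P$ and $P^\ast$ share the same ground set), $\psi_i$ is its own two-sided inverse and hence a bijection. For (iii), suppose $(a,x_j) \le (b,x_k)$ in $R_i(P)$. The three defining conditions are $a \le_P b$, $j \le k$, and $r_P(b) - r_P(a) \ge k - j$. These translate respectively into $b \le_{P^\ast} a$, $i-k \le i-j$, and (using $r_{P^\ast}(a) - r_{P^\ast}(b) = r_P(b) - r_P(a)$) $r_{P^\ast}(a) - r_{P^\ast}(b) \ge (i-j) - (i-k)$, which are exactly the conditions that $\psi_i((b,x_k)) \le \psi_i((a,x_j))$ in $R_i(P^\ast)$. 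Since $\psi_i$ is an involution, its inverse is also order-reversing.

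I do not anticipate a genuine obstacle here; the lemma is essentially a definitional bookkeeping statement, and the only content is the cancellation $(i-j) - (i-k) = k-j$ paired with the dual rank identity. The mild subtlety worth flagging in the write-up is that $\psi_i$ is defined on pairs, so one must keep track of which poset each coordinate lives in, and be careful that the "rank condition" in the Rees product is stated in terms of $r_P$ on the first coordinate, which flips sign under passage to $P^\ast$.
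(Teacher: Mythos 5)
Your proposal is correct and follows essentially the same route as the paper's proof: unwind the three Rees-product membership and order conditions, use $r_{P^\ast}(a) = n - r_P(a)$ to translate them, and note the symmetry gives the inverse for free. The only stylistic caveat is your phrase ``$\psi_i \circ \psi_i$ is the identity'': since $\psi_i$ has domain $R_i(P)$ and codomain $R_i(P^\ast)$ (which are different subsets of $P \times \{x_0,\dots,x_n\}$), one should, as the paper does, name the companion map $\psi_i^\ast : R_i(P^\ast) \to R_i(P)$ given by the same formula and observe that it is well-defined by the identical argument and inverse to $\psi_i$ --- but this is a wording issue, not a gap.
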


\begin{proof}
We show first that $\psi_i$ is well-defined, that is, if $(a,x_j)
\in R_i(P)$ then $(a,x_{i-j}) \in R_i(P^\ast)$.  For $a \in P$ and
$j \in \{0,\ldots,n\}$ we have $(a,x_j) \in R_i(P)$ if and only if
the three conditions
\begin{itemize}
\item[(1)] $0 \leq j \leq i$ \item[(2)] $r_P(a) \geq j$
\item[(3)] $n-r_P(a) \geq i-j$
\end{itemize}
hold.  If (1), (2), (3) hold then so do all of
\begin{itemize}
\item[($1^\prime$)] $0 \leq i-j \leq i$ \item[($2^\prime$)]
$r_{P^\ast}(a)=n-r_P(a) \geq i-j$  \item[($3^\prime$)]
$n-r_{P^\ast}(a)=r_P(a) \geq j=i-(i-j)$,
\end{itemize}
and ($1^\prime$), ($2^\prime$), ($3^\prime$) together imply that $(a,x_{i-j}) \in
R_i(P^\ast)$.  The map $\psi^\ast_i:R_i(P^\ast)\rightarrow R_i(P)$
given by $\psi^\ast_i((a,x_j))=(a,x_{i-j})$ is also well-defined by
the argument just given, and $\psi^\ast_i=\psi_i^{-1}$, so
$\psi_i$ is a bijection.

Now for $(a,x_j)$ and $(b,x_k)$ in
$R_i(P)$, we have $(a,x_j)<(b,x_k)$ if and only if the three
conditions
\begin{itemize}
\item [(4)] $a \leq_P b$ \item[(5)] $j \leq k$  \item[(6)]
$r_P(b)-r_P(a) \geq k-j$
\end{itemize}
hold.  If (4), (5), (6) hold then so do all of
\begin{itemize}
\item[($4^\prime$)] $b \leq_{P^\ast} a$ \item[($5^\prime$)] $i-k \leq i-j$ \item[($6^\prime$)] $r_{P^\ast}(a)-r_{P^\ast}(b)=r_P(b)-r_P(a) \geq
k-j=(i-j)-(i-k)$,
\end{itemize}
and ($4^\prime$), ($5^\prime$), ($6^\prime$) together imply that in $R_i(P^\ast)$ we have
$(b,x_{i-k}) \leq (a,x_{i-j})$.  Therefore, $\psi_i$ is order
reversing, and the same argument shows that $\psi_i^\ast$ is order
reversing.
\end{proof}

\begin{cor}\label{sumcor}
 For $1 \leq i \leq n$ we have
\begin{equation} \label{sumeq}
\mu(\wh{I_{i-1}(P)})=\sum_{(a,x_i) \in
R_i(P^\ast)}\mu_{R_i(P^\ast)}((\hat 1_P,x_0),(a,x_i)).
\end{equation}
\end{cor}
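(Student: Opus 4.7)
The strategy is to combine the two preceding lemmas with the defining recursion of the M\"obius function. First, Lemma \ref{qi} identifies $R_i^+(P)$ with $I_{i-1}(P)^+$ in such a way that $(\hat 1_P, x_i)$ corresponds to the adjoined maximum of $I_{i-1}(P)^+$. Hence adjoining a new minimum $\hat 0$ below $R_i^+(P)$ yields a poset isomorphic to $\wh{I_{i-1}(P)}$, and the dual form of the M\"obius recursion gives
\[
\mu(\wh{I_{i-1}(P)}) \;=\; -\sum_{z \in R_i^+(P)} \mu_{R_i^+(P)}\!\bigl(z,(\hat 1_P, x_i)\bigr).
\]
Because any element above some $z \in R_i^+(P)$ automatically has second coordinate at least $1$, the interval $[z,(\hat 1_P, x_i)]$ is the same in $R_i^+(P)$ and in $R_i(P)$, so these M\"obius values coincide with $\mu_{R_i(P)}(z,(\hat 1_P, x_i))$.

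Next, I would extend the sum to all of $R_i(P)$. Since $i \geq 1$, we have $(\hat 0_P, x_0) \neq (\hat 1_P, x_i)$, so the standard identity $\sum_{z \in R_i(P)} \mu_{R_i(P)}(z,(\hat 1_P, x_i)) = 0$ applies. Splitting the sum along the disjoint decomposition $R_i(P) = R_i^-(P) \sqcup R_i^+(P)$ yields
\[
\mu(\wh{I_{i-1}(P)}) \;=\; \sum_{(a,x_0)\in R_i(P)} \mu_{R_i(P)}\!\bigl((a,x_0),(\hat 1_P, x_i)\bigr).
\]

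Finally, I would apply the antiisomorphism $\psi_i \colon R_i(P) \to R_i(P^\ast)$ of Lemma \ref{antii}. It restricts to a bijection between $\{(a,x_0) \in R_i(P)\}$ and $\{(a,x_i) \in R_i(P^\ast)\}$, sends $(\hat 1_P, x_i) \mapsto (\hat 1_P, x_0)$, and reverses order, so
\[
\mu_{R_i(P)}\!\bigl((a,x_0),(\hat 1_P, x_i)\bigr) \;=\; \mu_{R_i(P^\ast)}\!\bigl((\hat 1_P, x_0),(a,x_i)\bigr).
\]
Substituting transforms the sum above into exactly the right-hand side of the claimed identity. The argument is essentially mechanical given the two preceding lemmas; the only subtlety worth flagging is the identification of M\"obius values across the inclusions $R_i^+(P) \subseteq R_i^+(P) \cup \{\hat 0\} \subseteq R_i(P)$, which is immediate from the definition of $R_i^+(P)$ since no element of $R_i^-(P)$ lies above any element of $R_i^+(P)$.
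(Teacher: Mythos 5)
Your proof is correct and follows essentially the same path as the paper's: apply the M\"obius recursion to $\wh{I_{i-1}(P)}$, transfer to $R_i^+(P)$ via Lemma \ref{qi}, use the upper-order-ideal property together with $\sum_{z\in R_i(P)}\mu_{R_i(P)}(z,(\hat 1_P,x_i))=0$ to rewrite the sum over $R_i^-(P)$, and finish with the antiisomorphism $\psi_i$ of Lemma \ref{antii}. The paper's proof is the same chain of equalities, written more compactly.
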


In case the notation has confused the reader, we remark before
proving Corollary \ref{sumcor} that the sum on the right side of
equality (\ref{sumeq}) is taken over all pairs $(a,x_i)$ such that
$a \in P$ with $r_P(a) \leq n-i$ (so $r_{P^\ast}(a) \geq i$),
and that $\hat 1_P$, being the maximum element of $P$, is the
minimum element of $P^\ast$ (so $(\hat 1_P,x_0)$ is the minimum
element of $R_i(P^\ast)$).

\begin{proof}
We have
\begin{eqnarray*}
\mu(\wh{I_{i-1}(P)})& = & -\sum_{\alpha \in I_{i-1}(P)^+}\mu_{\wh{I_{i-1}(P)}}(\alpha,(\hat 1_P,i-1)) \\ & = &
-\sum_{\beta \in R_i^+(P)}\mu_{R_i^+(P)}(\beta,(\hat 1_P,x_i)) \\ &
= & \sum_{\gamma=(a,x_0) \in
R_i^-(P)}\mu_{R_i(P)}(\gamma,(\hat 1_P,x_i)) \\ & = &
\sum_{\gamma=(a,x_0) \in R_i^-(P)}
\mu_{R_i(P^\ast)}(\psi_i((\hat 1_P,x_i)),\psi_i(\gamma)) \\ & = &
\sum_{\gamma=(a,x_0) \in
R_i^-(P)}\mu_{R_i(P^\ast)}((\hat 1_P,x_0),(a,x_i)) \\ & = &
\sum_{(a,x_i) \in
R_i(P^\ast)}\mu_{R_i(P^\ast)}((\hat 1_P,x_0),(a,x_i)).
\end{eqnarray*}
Indeed, the first equality follows from the definition of the
M\"obius function; the second follows from Lemma \ref{qi}; the
third follows from the definition of the M\"obius function and the
fact that $\mu_{R_i^+(P)}$ is the restriction of $\mu_{R_i(P)}$ to
$R_i^+(P) \times R_i^+(P)$ (as $R_i^+(P)$ is an upper order ideal in
$R_i(P)$); the fourth follows from Lemma \ref{antii} and the last
two follow from the definition of $\psi_i$.
\end{proof}

\begin{proof}[Proof of Tree Lemma (Theorem~\ref{tree})] The poset $\ttn$ has  exactly $t^j$ elements of rank $j$ for each $j=0,\dots, n$.  Let $r_T$ be the rank function of $\ttn$ and let $\hat 0_T$ be the minimum element of $\ttn$.

We have \begin{eqnarray*}  \mu((P^**\ttn)^+)
 & = &
-\sum_{\alpha \in P^\ast \ast \ttn}\mu_{P^\ast \ast
\ttn}((\hat 1_P,\hat 0_T),\alpha) \\ & = & -\sum_{j=0}^{n} \sum_{\alpha
\in P^\ast_{n,t,j}}\mu_{P^\ast \ast \ttn}((\hat 1_P,\hat 0_T),\alpha),
\end{eqnarray*}
where
\[
P^\ast_{n,t,j}:=\{(a,w) \in P^\ast \ast \ttn: r_T(w)=j\}.
\]

We have
\begin{eqnarray*}
\sum_{\alpha \in P^\ast_{n,t,0}}\mu_{P^\ast \ast
\ttn}((\hat 1_P,\hat 0_T),\alpha) & = & \sum_{a \in P^\ast}\mu_{P^\ast
\ast \ttn}((\hat 1_P,\hat  0_T),(a,\hat 0_T)) \\ & = & \sum_{a \in
P^\ast}\mu_{P^\ast}(\hat 1_P,a) \\ & = & 0.
\end{eqnarray*}

Now fix $j \in [n]$.  For any $w \in \ttn$ with $r_T(w)=j$, the
interval $[\hat 0_T,w]$ in $\ttn$ is a chain of length $j$.  Therefore,
for any $(a,w) \in P^\ast_{n,t,j}$, the interval
$[(\hat 1_P,\hat 0_T),(a,w)]$  in $P^\ast \ast \ttn$ is isomorphic with
the interval $[(\hat 1_P,x_0),(a,x_j)]$ in $R_j(P^\ast)$.  For any $a \in
P^\ast$, the four conditions
\begin{itemize} \item $r_{P^\ast}(a) \geq j$, \item $(a,w) \in
P^\ast_{n,t,j}$ for some $w \in \ttn$, \item $(a,v) \in
P^\ast_{n,t,j}$ for every $v \in \ttn$ satisfying $r_T(v)=j$,
\item $(a,x_j) \in R_j(P^\ast)$
\end{itemize}
are all equivalent.  There are exactly $t^j$ elements $v \in \ttn$
 of rank $j$. It follows that
\[
\sum_{\alpha \in P^\ast_{n,t,j}}\mu_{P^\ast \ast
\ttn}((\hat 1_P,\hat 0 _T),\alpha)=t^j\sum_{(a,x_j) \in
R_j(P^\ast)}\mu_{R_j(P^\ast)}((\hat 1_P,x_0),(a,x_j)),
\]
and the Tree Lemma now follows from Corollary \ref{sumcor}.
\end{proof}

Since $B_n$ is Cohen-Macaulay and self-dual, the following result shows that Theorem \ref{bncnsg} is equivalent to (\ref{treeeq3}).

\begin{thm}[Equivariant Tree Lemma] \label{gtreelem} Let $P$ be a bounded, ranked $G$-poset of length $n$.
Then for all $t \in {\mathbb P}$,
\begin{equation} \label{eqtreeeq}
\bigoplus_{j=1}^{n}t^j L(I_{j-1}(P);G) \cong_G - L((P^**\ttn)^-;G).
\end{equation}
Consequently, if $P$ is Cohen-Macaulay then for all $t \in \PP$,
$$\bigoplus_{j=1}^n  t^j \tilde H_{n-2}(I_{j-1}(P)) \cong_G  \tilde H_{n-1}((P^**\ttn)^-).$$
\end{thm}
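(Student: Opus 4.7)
The plan is to mirror the proof of the non-equivariant Tree Lemma (Theorem~\ref{tree}) at the level of virtual $G$-representations, with Lefschetz characters replacing M\"obius numbers and the equivariant M\"obius recursion~\eqref{equimob} replacing the recursive definition of $\mu$. What makes this translation routine is that $G$ acts trivially on $\ttn$ and on the auxiliary chain $\{x_0<\cdots<x_n\}$, so every bijection used in the proof of Theorem~\ref{tree} is automatically $G$-equivariant.

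First I would verify $G$-equivariant versions of Lemmas~\ref{qi} and~\ref{antii}: the bijection $(a,x_j)\mapsto(a,j-1)$ is visibly $G$-equivariant, and so is the anti-isomorphism $\psi_i$. Since a $G$-equivariant anti-isomorphism of $G$-posets induces a $G$-equivariant simplicial isomorphism of their order complexes, this yields $G$-equivariant isomorphisms of Lefschetz characters of the whole posets and of all open subintervals. Following the proof of Corollary~\ref{sumcor} step by step at the level of Lefschetz characters --- applying \eqref{equimob} from the top to $\hat{I_{i-1}(P)}$ in place of the recursive M\"obius identity --- one obtains the equivariant analog
\[
L(I_{i-1}(P);G)\;\cong_G\;\bigoplus_{\bar a}L\bigl(((\hat 1_P,x_0),(a,x_i));G_a\bigr)\uparrow_{G_a}^G,
\]
where the sum ranges over $G$-orbit representatives of $\{a\in P^\ast:r_{P^\ast}(a)\ge i\}$, with the usual boundary conventions ($a=\hat 1_P$ contributes the trivial representation, $a$ covering $\hat 1_P$ in $P^\ast$ contributes the negative trivial, etc.).

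Next I would apply \eqref{equimob} from the bottom to the $G$-poset $R:=(P^\ast\ast\ttn)^+$ and partition the $G$-orbits of $R\setminus\{\hat 1_R\}$ by the rank $j$ of the $\ttn$-component. Because $G$ fixes $\ttn$ pointwise, each $G$-orbit $\bar a\in P^\ast/G$ with $r_{P^\ast}(a)\ge j$ contributes precisely $t^j$ orbits at tree-rank $j$, all sharing stabilizer $G_a$ and with $G_a$-equivariantly isomorphic intervals $((\hat 1_P,\hat 0_T),(a,w))\cong((\hat 1_P,x_0),(a,x_j))$ in $R_j(P^\ast)$; this is the source of the $t^j$ factor in the target identity. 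The $j=0$ contribution telescopes to the trivial representation by applying \eqref{equimob} to $P^\ast$ itself, paralleling the vanishing of the $j=0$ summand in the M\"obius-level proof. Assembling these pieces with the equivariant version of Corollary~\ref{sumcor} produces the virtual $G$-module identity~\eqref{eqtreeeq}.

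For the Cohen--Macaulay consequence, when $P$ is Cohen--Macaulay so are $P^\ast$, each $I_{j-1}(P)$ (as a principal open lower order ideal in the Cohen--Macaulay poset $P^-\ast C_n$), and $(P^\ast\ast\ttn)^-$; hence their Lefschetz characters collapse to $(-1)^{n-2}\tilde H_{n-2}$ and $(-1)^{n-1}\tilde H_{n-1}$ respectively. Since $-(-1)^{n-1}=(-1)^{n-2}$, substituting into~\eqref{eqtreeeq} yields the stated top-homology isomorphism. The main hurdle in the argument is the bookkeeping required to keep the sign and boundary conventions of~\eqref{equimob} consistent throughout, particularly in verifying the equivariant vanishing at tree-rank zero; I do not foresee a serious conceptual obstacle beyond faithfully translating each step of the proof of Theorem~\ref{tree}.
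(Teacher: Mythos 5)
Your proposal is correct and takes essentially the same approach as the paper: the paper's proof is a short remark that one replaces the recursive definition of the M\"obius function by the equivariant recursion~\eqref{equimob} throughout the proof of Theorem~\ref{tree}, noting that the bijections in Lemmas~\ref{qi} and~\ref{antii} are $G$-equivariant, and then uses the equivariant analog~\eqref{equisumeq} of~\eqref{sumeq} in the last step. You spell out the same steps in more detail and with essentially the same bookkeeping (the $t^j$-worth of orbits at tree-rank $j$ with common stabilizer $G_a$, the $j=0$ telescoping, and the sign match $-(-1)^{n-1}=(-1)^{n-2}$), so there is nothing to flag beyond the minor point that the Cohen--Macaulayness of $(P^\ast\ast\ttn)^-$ and of $I_{j-1}(P)$ ultimately rests on rank-selection in a Cohen--Macaulay poset together with the Bj\"orner--Welker theorem, which you assert but do not justify.
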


\begin{proof}  The proof is an equivariant version of the proof of the Tree Lemma.
In particular,  the  isomorphism of Lemma~\ref{qi} is $G$-equivariant,
as is the antiisomorphism of Lemma~\ref{antii}.

  The equivariant version of (\ref{sumeq}) is
\begin{equation} \label{equisumeq}  L(I_{i-1}(P);G) = \bigoplus_{(a,x_i) \in R_i(P^\ast)/G} L(((\hat 1_P,x_0),(a,x_i));G_a)\uparrow_{G_a}^G.\end{equation}
 To prove (\ref{equisumeq}) we let (\ref{equimob}) play the role of the recursive definition of M\"obius function in the proof of (\ref{sumeq}).

 To prove
(\ref{eqtreeeq}) we follow the proof of the Tree Lemma again letting (\ref{equimob}) play the role of the recursive definition of M\"obius function, and in the last step applying  (\ref{equisumeq}) instead of (\ref{sumeq}).
\end{proof}

\section{Corollaries} \label{corsec}
In this section we restate and prove  Corollaries~\ref{bncnqcor} and~\ref{bncnsgcor} and discuss some other corollaries that were mentioned in the introduction.

\begin{cor}[to Theorem~\ref{bncnq}] \label{bncnqcor2} For all $n\ge 0$, let $\mathcal D_n$ be the set of derangements in $\sg_n$.  Then
\begin{equation*} \dim \tilde H_{n-1}(B_n(q)^- \ast C_n)= \sum_{\sigma \in \mathcal D_n} q^{\comaj(\s) + \exc(\s)}.\end{equation*}
\end{cor}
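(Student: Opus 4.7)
The proof follows the same strategy used to derive Jonsson's theorem (Theorem~\ref{jj}) from Theorem~\ref{bncn}, now using its $q$-analog Theorem~\ref{bncnq} together with Corollary~\ref{coexcderang}. Set $Q := B_n(q)^- \ast C_n$. Since both $B_n(q)^-$ and $C_n$ are Cohen-Macaulay, so is $Q$ by the Bj\"orner--Welker theorem; hence the homology of $Q$ is concentrated in top degree $n-1$, and the Euler--Poincar\'e identity (\ref{eupon}) gives $\dim \rh_{n-1}(Q) = (-1)^{n-1}\mu(\wh Q)$. I would then apply the recursive definition of the M\"obius function to the top element of $\wh Q$ to obtain
\[
\mu(\wh Q) \;=\; -1 \;-\; \sum_{(V,j)\in Q} \mu_{\wh Q}(\hat 0,(V,j)).
\]

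The main structural observation is that for any $(V,j)\in Q$ with $\dim V = d$, the closed interval $[\hat 0,(V,j)]$ in $\wh Q$ is isomorphic to $\wh{I_j(B_d(q))}$.  Indeed, under the identification of the closed lower order ideal of $V$ in $B_n(q)$ with $B_d(q)$, the Rees-product conditions defining the closed principal lower order ideal of $(V,j)$ in $Q$ become precisely those defining the closed principal lower order ideal of $(V_d,j)$ in $B_d(q)^-\ast C_d$; note that $(V,j)\in Q$ forces $d\ge j+1$, which places $j$ in the range for which $I_j(B_d(q))$ is defined.  Combined with Cohen-Macaulayness of $I_j(B_d(q))$ and Theorem~\ref{bncnq}, this yields
\[
\mu_{\wh Q}(\hat 0,(V,j)) \;=\; (-1)^d\, q^{\binom{d}{2}+j}\,a^{\maj,\exc}_{d,j}(q^{-1}) \;=\; (-1)^d\!\!\!\sum_{\substack{\sigma\in\mathfrak S_d \\ \exc(\sigma)=j}}\!\!\! q^{\comaj(\sigma)+\exc(\sigma)}.
\]

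Summing over all $(V,j)\in Q$, using that $\F_q^n$ has $\left[\begin{array}{c} n\\ d\end{array}\right]_q$ subspaces of dimension $d$ and collecting the inner sum over $j$ into $A^{\comaj,\exc}_d(q,q)$, gives
\[
\mu(\wh Q) \;=\; -\sum_{d=0}^n (-1)^d \left[\begin{array}{c} n\\ d\end{array}\right]_q A^{\comaj,\exc}_d(q,q).
\]
Setting $t=q$ in the second identity of Corollary~\ref{coexcderang} and reindexing $d=n-k$ shows that this sum equals $(-1)^{n+1}\sum_{\sigma\in\mathcal D_n}q^{\comaj(\sigma)+\exc(\sigma)}$, so $\mu(\wh Q) = (-1)^{n-1}\sum_{\sigma\in\mathcal D_n}q^{\comaj(\sigma)+\exc(\sigma)}$; the claimed formula then follows from the Euler--Poincar\'e identity above.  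The only non-routine ingredient is the interval isomorphism, which is a straightforward verification of the Rees-product rank conditions; once that is in hand, the proof is essentially bookkeeping driven by Theorem~\ref{bncnq} and Corollary~\ref{coexcderang}.
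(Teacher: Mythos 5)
Your proof is correct and takes essentially the same route as the paper: Cohen--Macaulayness turns the claim into a M\"obius-function identity, the interval isomorphism $[\hat 0,(V,j)]\cong\wh{I_j(B_{\dim V}(q))}$ reduces each summand to Theorem~\ref{bncnq}, and Corollary~\ref{coexcderang} specialized at $t=q$ closes it out. The paper compresses the interval identification into the phrase ``the M\"obius function recurrence $\dots$ is equivalent to,'' so your only real difference is that you make that verification explicit.
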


\begin{proof}   Since $B_n(q)^- \ast C_n$ is
Cohen-Macaulay and the number of $m$-dimensional subspaces of $\F_q^n$ is  $ \left[\begin{array}{c} n \\m\end{array}\right]_q $, the M\"obius function recurrence for $(B_n(q)^- \ast C_n) \cup \{\hat 0, \hat 1\}$ is equivalent to 
$$ \dim \tilde H_{n-1}(B_n(q)^- \ast C_n) = \sum_{m=0}^n  \left[\begin{array}{c} n \\m\end{array}\right]_q(-1)^{n-m} \sum_{j=0}^{m-1} \dim \tilde H_{m-2}(I_j(B_m(q))).$$
It therefore follows from Theorem~\ref{bncnq} that
$$\dim \tilde H_{n-1}(B_n(q)^- \ast C_n) = \sum_{m=0}^n  \left[\begin{array}{c} n \\m\end{array}\right]_q(-1)^{n-m} \sum_{\sigma \in \mathfrak S_m} q^{\comaj(\s) + \exc(\s)}.$$
The result thus follows from Corollary~\ref{coexcderang} \end{proof}

\begin{cor}[to Theorem~\ref{bncnsg}] \label{bncnsgcor2}We have
\begin{equation} \label{bncnsgeq2}\sum_{n\ge 0} \ch \rh_{n-1}(B_n^-*C_n)  z^n = \frac {1} {1-\sum_{i\ge 2} (i-1) e_iz^i }.\end{equation}
Equivalently, \begin{equation} \label{bncnsgeq3} \ch \rh_{n-1}(B_n^-*C_n) = \sum_{j=0}^{n-1} \omega Q_{n,j,0}.  \end{equation}
\end{cor}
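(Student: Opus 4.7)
The plan is to apply Sundaram's equivariant Whitney-homology identity~(\ref{sund}) to the bounded $\sg_n$-poset $\widehat P$, where $P:=B_n^-\ast C_n$, then use Theorem~\ref{bncnsg} to read off the characters of the open lower intervals, and finally pass to generating functions. The $\sg_n$-orbits of $\widehat P$ are $\{\hat 0\}$, $\{\hat 1\}$, and for each pair $(i,k)$ with $1\le i\le n$ and $0\le k\le i-1$ the orbit of $([i],k)$, whose stabilizer is $\sg_i\times\sg_{n-i}$. The Rees-product order forces every $(S',k')<(S,k)$ in $B_n^-\ast C_n$ to satisfy $S'\subseteq S$, so as a $(\sg_{|S|}\times\sg_{n-|S|})$-poset the open lower interval $(\hat 0,(S,k))$ coincides with $I_k(B_{|S|})$, on which $\sg_{n-|S|}$ acts trivially (as that factor fixes $[n]\setminus S$ pointwise). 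Since $B_n^-\ast C_n$ and each $I_k(B_i)$ are Cohen-Macaulay, plugging Theorem~\ref{bncnsg} into~(\ref{sund}) and using the standard induction rule $\ch\bigl(V\otimes\mathrm{triv}_{\sg_{n-i}}\bigr)\!\uparrow_{\sg_i\times\sg_{n-i}}^{\sg_n}=\ch V\cdot h_{n-i}$ collapses the identity into
\[
\sum_{i=0}^{n}(-1)^i h_{n-i}\,R_i+(-1)^{n+1}\,\ch\tilde H_{n-1}(B_n^-\ast C_n;\sg_n)=0,
\]
where $R_0:=1$ (matching the rank-$0$ Whitney piece $h_n$, which comes from the trivial $\sg_n$-representation at $\hat 0$) and $R_i:=\sum_{k=0}^{i-1}\omega Q_{i,k}$ for $i\ge 1$. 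Solving,
$\ch\tilde H_{n-1}(B_n^-\ast C_n)=\sum_{i=0}^n(-1)^{n-i}h_{n-i}R_i$.

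Passing to generating functions, set $\mathcal T(z):=\sum_{n\ge 0}\ch\tilde H_{n-1}(B_n^-\ast C_n)\,z^n$ and $\mathcal R(z):=\sum_{n\ge 0}R_n z^n$. The recursion above reads $\mathcal T(z)=\mathcal R(z)\,H(-z)=\mathcal R(z)/E(z)$, using $H(-z)E(z)=1$. Theorem~\ref{introsymgenth} at $r=t=1$ (L'H\^opital in $t$) yields $\sum_n\sum_{j,k}Q_{n,j,k}z^n=H(z)/(H(z)-zH'(z))$; applying $\omega$ gives $\mathcal R(z)=E(z)/(E(z)-zE'(z))$. Hence
\[
\mathcal T(z)=\frac{1}{E(z)-zE'(z)}=\frac{1}{1-\sum_{n\ge 2}(n-1)e_nz^n},
\]
establishing~(\ref{bncnsgeq2}). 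For the equivalent statement~(\ref{bncnsgeq3}), apply Theorem~\ref{introsymgenth} instead at $r=0$ (so $H(rz)=1$) with the same $t\to 1$ limit to obtain $\sum_n\sum_j Q_{n,j,0}z^n=1/(H(z)-zH'(z))$; applying $\omega$ identifies this series with $\mathcal T(z)$, and comparing $z^n$ coefficients yields $\ch\tilde H_{n-1}(B_n^-\ast C_n)=\sum_{j=0}^{n-1}\omega Q_{n,j,0}$.

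The main obstacle is pinning down the equivariant identification $(\hat 0,(S,k))\cong_{\sg_{|S|}\times\sg_{n-|S|}}I_k(B_{|S|})$, so that Theorem~\ref{bncnsg} applies orbit-by-orbit inside~(\ref{sund}). Once this identification is in hand, the rest of the argument is bookkeeping with Sundaram's identity and direct manipulation using the symmetric-function identities $H(-z)E(z)=1$ and $E(z)-zE'(z)=1-\sum_{n\ge 2}(n-1)e_nz^n$.
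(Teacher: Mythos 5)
Your proposal matches the paper's proof essentially step for step: both apply Sundaram's identity (\ref{sund}) to $\widehat{B_n^-\ast C_n}$, identify the orbits and open lower intervals with the $I_j(B_m)$ carrying the stabilizer $\sg_m\times\sg_{n-m}$, plug in Theorem~\ref{bncnsg}, pass to generating functions, and finish with Theorem~\ref{introsymgenth} and the relation $E(z)H(-z)=1$. The only cosmetic difference is that you take the $t\to 1$ limit of (\ref{introsymgenth1}) via L'H\^opital, whereas the paper sets $t=1$ directly in the equivalent form (\ref{introsymgenth2}); these are the same computation.
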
 

\begin{proof}    Applying (\ref{sund}) to the Cohen-Macaulay $\sg_n$-poset  $\wh{B_n^- * C_n}$, we have
$$\tilde H_{n-1}(B_n^- * C_n) \cong_{\sg_n}\bigoplus_{m=0}^n (-1)^{n-m} \bigoplus_{j=0}^{m-1}  \left(\tilde H_{m-2}(I_j(B_m)) \otimes 1_{\sg_{n-m}}\right)\uparrow_{\sg_m\times\sg_{n-m}}^{\sg_n},$$
where $1_G$ denotes the trivial representation of a group $G$.  From this we obtain
\begin{equation} \label{sundeq2} \ch \tilde H_{n-1}(B_n^- * C_n)= \sum_{m=0}^n (-1)^{n-m} \sum_{j=0}^{m-1} \ch \tilde H_{m-2}(I_j(B_m)) h_{n-m}
\end{equation}
Hence
\begin{eqnarray*} \sum_{n \ge 0} \ch \tilde H_{n-1}(B_n^- \ast C_n)\, z^n &=& H(-z)\sum_{n\ge 0}z^n\sum_{j =0}^{n-1} \ch \tilde H_{n-2}(I_j(B_n)).\end{eqnarray*}

It follows from Theorem~\ref{bncnsg} and (\ref{introsymgenth2}) that
$$\sum_{n\ge 0}z^n\sum_{j =0}^{n-1} \ch \tilde H_{n-2}(I_j(B_n))t^j = \frac{E(z)}{1-\sum_{n\ge 2} t[n-1]_t e_nz^n}.$$
By setting $t=1$ and using the fact that  $E(z) H(-z) =1$,  we obtain (\ref{bncnsgeq2}). 
Equation~(\ref{bncnsgeq3})  follows from (\ref{bncnsgeq2}) and (\ref{introsymgenth2}). \end{proof}

We now present some additional corollaries of Theorem~\ref{bncnsg} and Corollary~\ref{bncnsgcor}, which follow from the occurrence of the right hand side of (\ref{introsymgeneq}) in  various results in the literature.

Let $X_n$ be the toric variety naturally associated to the Coxeter complex $C_n$ for the reflection group $\sg_n$.  (See, for example \cite{Br}, for a
discussion of Coxeter complexes and \cite{fu} for an explanation
of how toric varieties are associated to polytopes.) The action of $\sg_n$ on $C_n$ induces an action on $X_n$ and thus a representation on each cohomology group of $X_n$.  Now $X_n$ is a complex manifold of dimension $n-1$, and can have nontrivial homology only in dimensions $2j$, for $0 \leq j \leq n-1$.  Using work of Procesi \cite{pr}, Stanley shows in \cite{st2} that
\[
\sum_{n\ge 0} \sum_{j=0}^{n-1} \ch H^{2j}(X_n)\,t^{j} z^n
= {(1-t) H(z) \over H(zt) -tH(z)}.
\]  

Combining this with Theorem~\ref{bncnsg} and equations (\ref{introsymgeneq}) and (\ref{frsg}), we obtain the following result.

\begin{cor}[to Theorem~\ref{bncnsg}] For all $j =0,\dots,n-1$, we have the following isomorphism of $\sg_n$-modules
$$ \rh_{n-2}(I_j(B_n)) \cong_{\sg_n} H^{2j}(X_n) \otimes \sgn.$$
\end{cor}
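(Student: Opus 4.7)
The plan is to read off the identification directly from generating functions and the Frobenius correspondence, without touching geometry at all beyond the statement of the Procesi--Stanley formula.

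First, I would invoke Theorem~\ref{bncnsg}, which gives
$$\ch \rh_{n-2}(I_j(B_n)) = \omega Q_{n,j}.$$
Next, I would apply the standard involution $\omega$ to the Procesi--Stanley identity
$$\sum_{n\ge 0}\sum_{j=0}^{n-1}\ch H^{2j}(X_n)\, t^j z^n = \frac{(1-t)H(z)}{H(zt)-tH(z)},$$
or equivalently compare it directly with the $r=1$ specialization of (\ref{introsymgenth1}) from Theorem~\ref{introsymgenth}, namely
$$\sum_{n,j\ge 0} Q_{n,j}(\xx)\, t^j z^n = \frac{(1-t)H(z)}{H(zt)-tH(z)}.$$
Extracting the coefficient of $t^j z^n$ on both sides yields $\ch H^{2j}(X_n) = Q_{n,j}$.

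Finally, combining these two identifications with property (\ref{frsg}), I get
$$\ch \rh_{n-2}(I_j(B_n)) = \omega Q_{n,j} = \omega\, \ch H^{2j}(X_n) = \ch\bigl(H^{2j}(X_n)\otimes \sgn\bigr).$$
Since $\ch$ is an isomorphism from the representation ring of $\sg_n$ (in degree $n$) onto the degree-$n$ homogeneous component of the symmetric functions, the claimed $\sg_n$-module isomorphism follows immediately.

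There is essentially no obstacle here: the entire content is the coincidence of two generating functions, one supplied by Theorem~\ref{introsymgenth} and one by \cite{pr,st2}. The only bookkeeping point worth stating carefully is the $r=1$ specialization used to pass from the fixed-point refined $Q_{n,j,k}$ to $Q_{n,j}$, so that the right-hand sides of the two generating function identities match on the nose.
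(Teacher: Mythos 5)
Your argument is correct and is precisely the one the paper intends when it says the corollary follows by ``combining'' the Procesi--Stanley formula with Theorem~\ref{bncnsg}, equation~(\ref{introsymgeneq}) at $r=1$, and the identity~(\ref{frsg}); you have simply written out the coefficient extraction and the application of $\omega$ that the paper leaves implicit. (The aside about ``applying $\omega$ to the Procesi--Stanley identity'' is an unneeded detour --- the direct comparison $\ch H^{2j}(X_n) = Q_{n,j}$ is all that is used --- but this does not affect the correctness of the proof.)
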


It would be interesting to find a topological explanation for this isomorphism, in particular one that extends the isomorphism to other Coxeter groups.  

Another corollary is an immediate consequence of  a refinement of a result of Carlitz, Scoville and Vaughan \cite{csv} due to Stanley (cf. \cite[Theorem 7.2]{ShWa}).
\begin{cor}For all $j =0,\dots,n-1$, let $W_{n,j}$ be the set of all words of length $n$ over the alphabet of positive integers with the properties that  no adjacent letters are equal and there are exactly $j$ descents .  Then
$$\ch  \rh_{n-2}(I_j(B_n)) = \sum_{w:=w_1\cdots w_n \in W_{n,j}} x_{w_1}x_{w_2}\cdots x_{w_n}. $$ \end{cor}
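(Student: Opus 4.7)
The plan is short: combine Theorem~\ref{bncnsg} with Stanley's descent refinement of the Carlitz-Scoville-Vaughan identity as stated in \cite[Theorem~7.2]{ShWa}. By Theorem~\ref{bncnsg}, the left-hand side of the corollary equals $\omega Q_{n,j}$, so it suffices to verify the symmetric function identity
$$\omega Q_{n,j}(\x) \;=\; \sum_{w \in W_{n,j}} x_{w_1}\cdots x_{w_n}.$$

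To check this, I would pass to generating functions. Setting $r=1$ in (\ref{introsymgenth2}) and applying the involution $\omega$ termwise (using $\omega(h_n) = e_n$) yields
$$\sum_{n,j \ge 0} \omega Q_{n,j}(\x)\, t^j z^n \;=\; \frac{E(z)}{1 - \sum_{n\ge 2} t[n-1]_t e_n z^n}.$$
The content of Stanley's refinement, as recorded in \cite[Theorem~7.2]{ShWa}, is that this same rational function is equal to the descent-refined generating function for proper words,
$$\sum_{n,j \ge 0} \Big(\sum_{w \in W_{n,j}} x_{w_1}\cdots x_{w_n}\Big)\, t^j z^n.$$
Extracting the coefficient of $t^j z^n$ on both sides yields the desired identity, and hence the corollary.

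No step presents a real obstacle, since both ingredients have already been established: Theorem~\ref{bncnsg} supplies the homology-to-quasisymmetric translation and \cite[Theorem~7.2]{ShWa} supplies the closed form for the word sum. The only point worth a second look is that the descent convention in the cited theorem matches the convention (positions $i$ with $w_i > w_{i+1}$) used in defining $W_{n,j}$, which it does by construction, so the proof is indeed immediate.
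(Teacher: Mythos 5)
Your argument is correct and matches the paper's (unstated but intended) reasoning: the paper presents this corollary precisely as an immediate combination of Theorem~\ref{bncnsg} with Stanley's descent-refined version of the Carlitz--Scoville--Vaughan identity, cited as Theorem~7.2 of \cite{ShWa}. You simply unpack the ``immediate'' step by matching the $r=1$, $\omega$-twisted form of (\ref{introsymgenth2}) against the generating function for Smirnov words refined by descents, which is exactly what the cited result supplies.
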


The following equivariant version of Theorem~\ref{jj} is an immediate consequence of  Corollary~\ref{bncnsgcor2} and   MacMahon's formula  \cite[Sec. III, Ch.III]{mac1} for multiset derangements.   A {\it multiset derangement} of order $n$ is a $2 \times n$ matrix $D=(d_{i,j})$ of positive integers such that

\begin{itemize}
\item $d_{1,j} \leq d_{1,j+1}$ for all $j \in [n-1]$,

\item the multisets $\{d_{1,j}:j \in [n]\}$ and $\{d_{2,j}:j \in [n]\}$ are equal, and

\item $d_{1,j} \neq d_{2,j}$ for all $j \in [n]$.

\end{itemize}

Given a multiset derangement $D$, we write $x^D$ for $\prod_{j=1}^{n}x_{1,j}$.

\begin{cor}[to Corollary~\ref{bncnsgcor2}]
For all  $n \ge 1$, we have
\begin{equation} \label{derange1} \ch \tilde H_{n-1}(B_n^-*C_n) = \sum_{D\in \mathcal {MD}_{n}} {\rm x}^D,
\end{equation}
where $\mathcal {MD}_{n,}$ is the set of all multiset derangements of order $n$.
\end{cor}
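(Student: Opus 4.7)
The plan is to combine Corollary~\ref{bncnsgcor2} with MacMahon's classical generating function for multiset derangements, both of which evaluate to the same rational symmetric function, and then extract the coefficient of $z^n$.

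First, I would invoke equation~(\ref{bncnsgeq2}) of Corollary~\ref{bncnsgcor2}, which asserts
\[
\sum_{n \ge 0} \ch \tilde H_{n-1}(B_n^- \ast C_n)\, z^n \;=\; \frac{1}{1 - \sum_{i \ge 2}(i-1)\, e_i\, z^i}.
\]
Second, I would cite MacMahon's formula \cite[Sec.~III, Ch.~III]{mac1} in its symmetric function form, which asserts that the generating function for multiset derangements, weighted by $x^D$, is
\[
\sum_{n \ge 0} \Big( \sum_{D \in \mathcal{MD}_n} x^D \Big)\, z^n \;=\; \frac{1}{1 - \sum_{i \ge 2}(i-1)\, e_i\, z^i}.
\]
Since the right-hand sides of these two identities agree, equating them and extracting the coefficient of $z^n$ immediately yields equation~(\ref{derange1}).

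The only real step is to verify that MacMahon's identity takes the precise rational form displayed above; once this translation of MacMahon's classical result into modern symmetric-function language is in hand, the corollary follows by a one-line coefficient extraction. As a sanity check one can compute small cases directly: for $n=2$ one has $\sum_{D \in \mathcal{MD}_2} x^D = e_2$, for $n=3$ one has $2\,e_3$, and for $n=4$ one finds $3\,e_4 + e_2^2$, matching the coefficients of the right-hand side. Thus there is no substantive obstacle, consistent with the authors' description of this result as an immediate consequence.
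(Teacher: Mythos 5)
Your proof is correct and follows exactly the route the paper intends: equate the generating function in equation~(\ref{bncnsgeq2}) with MacMahon's multiset-derangement generating function and extract coefficients. The paper itself gives no more detail than ``an immediate consequence of Corollary~\ref{bncnsgcor2} and MacMahon's formula,'' so your argument is at the same level of completeness; the one place where you rely on an unverified translation (that MacMahon's count equals the rational expression $\left(1-\sum_{i\ge 2}(i-1)e_iz^i\right)^{-1}$) is honestly flagged, and your small-case checks, which I have independently confirmed through $n=4$, bear it out.
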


\section{Type BC-analogs} \label{bcsec} In this section we present type BC analogs (in the context of Coxeter groups) of both the 
Bj\"orner-Welker-Jonsson derangement result (Theorem~\ref{jj}) and its q-analog  (Corollary~\ref{bncnqcor}).

A   poset $P$ with a $\hat 0_P$ is said to be a {\em simplicial poset} if $[\hat 0_P, x]$ is a Boolean
algebra for all $x \in P$.  The prototypical example of a simplicial poset is the poset of faces  of a simplicial complex.   In fact, every simplicial poset is isomorphic to the face poset of some regular CW complex (see \cite{bj2}).
The next result follows immediately from Theorem~\ref{bncn} and
the definition of the M\"obius function.  For a ranked poset $P$ of length $n$ and $r \in \{0,1,\dots, n\}$, let $W_r(P)$ be the $r${\em th} Whitney number of the second kind of $P$, that is the number elements of rank $r$ in $P$.

\begin{cor}[of Theorem~\ref{bncn}]
Let $P$ be a ranked simplicial poset of length $n$.     Then
$$
\mu(\wh{P^- \ast C_n})=\sum_{r=0}^{n}(-1)^{r-1}W_r(P)r!.
$$
\label{boopo}
\end{cor}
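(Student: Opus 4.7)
The plan is to apply the recursive definition of the M\"obius function to $\wh{P^-*C_n}$ and then identify every lower interval that appears with one of the posets controlled by Theorem~\ref{bncn}. Writing the recursion as
\[
\mu(\wh{P^-*C_n}) \;=\; -1 \;-\; \sum_{(x,k)\in P^-*C_n} \mu_{\wh{P^-*C_n}}(\hat 0,(x,k)),
\]
the task reduces to computing each summand $\mu_{\wh{P^-*C_n}}(\hat 0,(x,k))$ in closed form.

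The main step is to pull the simplicial hypothesis through the Rees product. For each $x\in P^-$ of rank $r:=r_P(x)$, the closed principal lower order ideal $[\hat 0_P,x]$ of $P$ is, by assumption, isomorphic to the Boolean algebra $B_r$; call this isomorphism $\iota$. I will check that the assignment $(y,j)\mapsto(\iota(y),j)$, extended by $\hat 0 \mapsto \hat 0$, gives an order isomorphism
\[
[\hat 0,(x,k)]_{\wh{P^-*C_n}} \;\cong\; \wh{I_k(B_r)}.
\]
Verifying that the three defining conditions of the Rees product order are preserved is straightforward once one observes that $r_P(y)=r_{B_r}(\iota(y))$ and that only rank-differences enter the order relation.

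By Bj\"orner-Welker $B_r^-*C_r$ is Cohen-Macaulay, and hence so is its open principal lower order ideal $I_k(B_r)$. The homology of $I_k(B_r)$ is therefore concentrated in its top dimension $r-2$, and the Euler-Poincar\'e identity (\ref{eupon2}) combined with Theorem~\ref{bncn} gives
\[
\mu_{\wh{P^-*C_n}}(\hat 0,(x,k)) \;=\; \mu(\wh{I_k(B_r)}) \;=\; (-1)^{r-2} a_{r,k} \;=\; (-1)^{r} a_{r,k}.
\]

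To finish, I will group the sum by the rank $r$ of the first coordinate: there are $W_r(P)$ choices of $x$ of rank $r$, and for each such $x$ the admissible values of $k$ form $\{0,1,\ldots,r-1\}$. The Eulerian identity $\sum_{k=0}^{r-1} a_{r,k} = r!$ collapses each inner sum to $(-1)^{r}r!$, and then absorbing the leading $-1$ as the $r=0$ contribution (using $W_0(P)=0!=1$ so that $(-1)^{-1}W_0(P)\,0! = -1$) yields
\[
\mu(\wh{P^-*C_n}) \;=\; \sum_{r=0}^{n}(-1)^{r-1}W_r(P)\,r!,
\]
as required. All the substantive content sits in the interval identification, where the simplicial hypothesis is indispensable; everything else is bookkeeping.
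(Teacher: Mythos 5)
Your proof is correct and follows exactly the route the paper leaves implicit (the paper dispatches the corollary with ``follows immediately from Theorem~\ref{bncn} and the definition of the M\"obius function''). The key interval identification $[\hat 0,(x,k)]_{\wh{P^-*C_n}}\cong\wh{I_k(B_r)}$ via the simplicial hypothesis, followed by Cohen--Macaulayness and $\sum_{k=0}^{r-1}a_{r,k}=r!$, is the intended argument.
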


We think of $B_n$ as
the poset of faces of a $(n-1)$-simplex, whose barycentric
subdivision is the Coxeter complex of type $A$.  Then $d_n$
is the number of derangements in the action of the associated
Coxeter group $\mathfrak S_n$ on the vertices of the simplex.  Let $PCP_n$
be the poset of  {\it simplicial} (that is, proper) faces
of the $n$-dimensional crosspolytope $CP_n$ (see for example
\cite[Section 2.3]{blswz}), whose barycentric subdivision is the Coxeter complex
of type BC. The associated Weyl group, which is isomorphic to the
wreath product $\mathfrak S_n[\zz_2]$, acts by reflections on $CP_n$ and
therefore on its vertex set.  Let $d_n^{BC}$ be the number of
derangements in this action on vertices.

\begin{thm}
For all $n$, we have
\[\dim \tilde H_{n-1}({PCP^{-}_n \ast C_n}) =
d_n^{BC}.
\]
\label{typeb}
\end{thm}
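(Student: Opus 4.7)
The plan is to apply Corollary~\ref{boopo} with $P = PCP_n$ and then verify, by a direct inclusion-exclusion, that the resulting alternating sum equals $d_n^{BC}$. First, $PCP_n$ is a ranked simplicial poset of length $n$: its elements are the antipodal-free subsets of the vertex set $\{\pm e_1,\ldots,\pm e_n\}$ (with the empty set as $\hat 0$), each interval $[\hat 0,x]$ is Boolean, and the Whitney numbers are
\[
W_r(PCP_n) \;=\; \binom{n}{r}\,2^r,
\]
since a rank-$r$ face is determined by an $r$-subset of $[n]$ together with a sign for each chosen index. The order complex of $PCP_n^-$ is the barycentric subdivision of $\partial CP_n \cong S^{n-1}$, so $PCP_n^-$ is Cohen--Macaulay, and by the Bj\"orner--Welker theorem so is $PCP_n^- \ast C_n$, which has length $n-1$ and reduced homology concentrated in degree $n-1$. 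Combining the Euler--Poincar\'e formula with Corollary~\ref{boopo} yields
\[
\dim \tilde H_{n-1}(PCP_n^- \ast C_n) \;=\; (-1)^{n-1}\,\mu(\widehat{PCP_n^- \ast C_n}) \;=\; \sum_{r=0}^{n}(-1)^{n-r}\binom{n}{r}\,2^r\, r!.
\]

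Next I would compute $d_n^{BC}$ by inclusion--exclusion. Writing elements of $\mathfrak S_n[\mathbb Z_2]$ as pairs $(\sigma,\epsilon)$ with $\epsilon \in \{\pm 1\}^n$ acting on vertices by $(\sigma,\epsilon)\cdot e_i = \epsilon_i\, e_{\sigma(i)}$, one checks that when $\sigma(i)=i$ both $e_i$ and $-e_i$ are fixed if $\epsilon_i = +1$ and neither is fixed if $\epsilon_i = -1$. Hence the bad events $A_i := \{w : w\cdot e_i = e_i\}$ are naturally indexed by $i \in [n]$, and for every $S \subseteq [n]$,
\[
\Bigl| \bigcap_{i \in S} A_i \Bigr| \;=\; 2^{n-|S|}(n-|S|)!,
\]
because $\sigma$ is free on $[n]\setminus S$ and $\epsilon$ is free outside $S$. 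Inclusion--exclusion then gives
\[
d_n^{BC} \;=\; \sum_{k=0}^{n}(-1)^k\binom{n}{k}\,2^{n-k}(n-k)! \;=\; \sum_{r=0}^{n}(-1)^{n-r}\binom{n}{r}\,2^r\, r!,
\]
which matches the formula for $\dim \tilde H_{n-1}(PCP_n^- \ast C_n)$ above and completes the proof.

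There is no substantial obstacle here: the only slightly subtle point is recognizing that the antipodal symmetry of $CP_n$ couples each pair $\{e_i,-e_i\}$ into a single ``bad'' event (when $\sigma(i)=i$, both or neither of the antipodal pair is fixed), so the inclusion--exclusion is naturally indexed by $[n]$ rather than by all $2n$ vertices, producing exactly the alternating sum in $\binom{n}{r}\,2^r\, r!$ that Corollary~\ref{boopo} dictates.
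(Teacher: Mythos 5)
Your proposal is correct and follows essentially the same approach as the paper: apply Corollary~\ref{boopo} with $P = PCP_n$ using $W_r(PCP_n) = 2^r\binom{n}{r}$, then compute $d_n^{BC}$ by inclusion--exclusion over subsets of $[n]$ (observing, as the paper does via the remark that the pointwise stabilizer of $S$ equals that of $\overline{S}$ and is $\mathfrak S_{n-|S|}[\Z_2]$, that the antipodal pair $\{e_i,-e_i\}$ gives a single coupled bad event) to obtain the same alternating sum $\sum_{r}(-1)^{n-r}\binom{n}{r}2^r r!$.
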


\begin{proof}
It is well known and straightforward to prove by induction on $n$
that, for $0 \leq r \leq n$, the number of $(r-1)$-dimensional faces
of $CP_n$ is $2^r{{n} \choose {r}}$.  Corollary \ref{boopo} gives
\[
 \mu(\wh{PCP^{-}_n \ast C_n})=\sum_{r=0}^{n}(-1)^{r-1}2^r{{n} \choose {r}}r!.
\]  Hence since $PCP^{-}_n$ is Cohen-Macaulay,  we have,
$$\dim \tilde H_{n-1}({PCP^{-}_n \ast C_n}) = \sum_{r=0}^{n}(-1)^{n-r}2^r{{n} \choose {r}}r!.$$

On the other hand, we may identify the vertices of $CP_n$ with
elements of $[n] \cup [\ov{n}]$, where $ [\ov{n}] =\{\bar 1,\dots,
\bar n\}$, so that the action of the Weyl group $W \cong
\S_n[\zz_2]$ is determined by the following facts.
\begin{itemize}
\item Each element $w \in W$ can be written uniquely as $w=(\sigma,v)$ with
$\sigma \in \S_n$ and $v \in \zz_2^n$. \item Any element of the
form $(\sigma,0)$ maps $i \in [n]$ to $\sigma(i)$ and $\ov{i} \in
[\ov{n}]$ to $\ov{\sigma(i)}$. \item Any element of the form
$(1,e_i)$, where $e_i$ is the $i^{th}$ standard basis vector in
$\zz_2^n$, exchanges $i$ and $\ov{i}$, and fixes all other
vertices.
\end{itemize}
It follows that for each $S \subseteq [n]$, the pointwise
stabilizer of $S$ in $W$ is exactly the pointwise stabilizer of
$\ov{S}:=\{\ov{i}:i \in S\}$ and is isomorphic to
$\S_{n-|S|}[\zz_2]$.  Using inclusion-exclusion as is done to
calculate $d_n$, we get
\[
d_n^{BC}=\sum_{j=0}^{n}(-1)^j{{n} \choose {j}}2^{n-j}(n-j)!.
\]
\end{proof}

Muldoon and Readdy \cite{mr} have recently obtained a dual version of Theorem~\ref{typeb} in which
the Rees product of the dual of $PCP_n$ with the chain is considered.  

Next we consider a poset that can be viewed as both a $q$-analog of $PCP_n$ and a type $BC$ analog of $B_n(q)$.  Let $\langle \cdot,\cdot\rangle$ be a nondegenerate, alternating bilinear form on the vector space $\F_q^{2n}$.  A subspace $U$ of $\F_q^{2n}$ is said to be {\em totally isotropic} if $\langle u,v\rangle = 0$ for all $u,v \in U$.   Let $PCP_n(q)$ be the poset of totally isotropic subspaces of  $\F_q^{2n}$.  The order complex of $PCP_n(q)$ is the building of type $BC$, naturally associated to a finite group of Lie type $B$ or $C$ (see for example \cite[Chapter V]{Br}, \cite[Appendix 6]{Ro}).  Thus we have both a $q$-analog of $PCP_n$ and a type BC analog of $B_n(q)$ (since the order complex of $B_n(q)$ is the building of type $A$).

Clearly $PCP_n(q)$ is a lower order ideal of $B_{2n}(q)$.

\begin{prop} \label{wpcpq} The maximal elements of  $PCP_n(q)$ all have dimension $n$.  For $r=0,\dots, n$, the number of $r$-dimensional isotropic subspaces of   $\F_q^{2n}$ is given by $$W_r(PCP_n(q))=  \left[\begin{array}{c} n \\r\end{array}\right]_q (q^n+1)(q^{n-1}+1) \cdots(q^{n-r+1}+1).$$
\end{prop}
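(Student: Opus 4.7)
The plan is to count isotropic $r$-flags (linearly independent $r$-tuples of vectors spanning an isotropic subspace) in two ways and then divide by the number of ordered bases of an $r$-dimensional space.

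First I would dispose of the statement about maximality. For any subspace $U\subseteq \F_q^{2n}$, the nondegeneracy of $\langle\cdot,\cdot\rangle$ gives $\dim U+\dim U^\perp=2n$. Since $\langle\cdot,\cdot\rangle$ is alternating, every $1$-dimensional subspace is isotropic, and more generally $U$ is totally isotropic iff $U\subseteq U^\perp$. Hence if $U$ is maximal totally isotropic then $U=U^\perp$, forcing $\dim U = n$. Conversely, any totally isotropic $U$ with $\dim U<n$ satisfies $\dim U^\perp>\dim U$, so picking $u\in U^\perp\setminus U$ yields a strictly larger totally isotropic subspace $U+\F_q u$. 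This shows maximal totally isotropics are exactly the $n$-dimensional ones.

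For the second part I would count the number $N_r$ of ordered $r$-tuples $(v_1,\dots,v_r)$ of linearly independent vectors in $\F_q^{2n}$ such that $\langle v_i,v_j\rangle=0$ for all $i,j$. I would do this inductively: having chosen isotropic linearly independent $v_1,\dots,v_k$, let $V_k=\mathrm{span}(v_1,\dots,v_k)$. By nondegeneracy, $\dim V_k^\perp=2n-k$, and because $\langle\cdot,\cdot\rangle$ is alternating and the $v_i$ are mutually orthogonal, $V_k\subseteq V_k^\perp$. The allowed choices for $v_{k+1}$ are precisely the elements of $V_k^\perp\setminus V_k$, giving $q^{2n-k}-q^k$ options. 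Therefore
\[
N_r=\prod_{k=0}^{r-1}(q^{2n-k}-q^k)=q^{\binom{r}{2}}\prod_{j=n-r+1}^{n}(q^{2j}-1).
\]

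Dividing $N_r$ by the number $\prod_{k=0}^{r-1}(q^r-q^k)=q^{\binom{r}{2}}\prod_{j=1}^{r}(q^j-1)$ of ordered bases of an $r$-dimensional $\F_q$-space yields $W_r(PCP_n(q))$. Using the identity $q^{2j}-1=(q^j-1)(q^j+1)$ to split the numerator and cancelling the telescoping factor $\prod_{j=n-r+1}^n(q^j-1)/\prod_{j=1}^r(q^j-1)=\bigl[\begin{smallmatrix}n\\r\end{smallmatrix}\bigr]_q$, the remaining product $\prod_{j=n-r+1}^n(q^j+1)$ is exactly $(q^n+1)(q^{n-1}+1)\cdots(q^{n-r+1}+1)$, giving the stated formula. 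No step is a real obstacle; the only place to be careful is verifying that $V_k\subseteq V_k^\perp$, which rests squarely on $\langle\cdot,\cdot\rangle$ being alternating (so $\langle v_i,v_i\rangle=0$ automatically) together with the hypothesis that $v_1,\dots,v_k$ are pairwise orthogonal.
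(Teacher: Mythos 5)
Your proof is correct and follows essentially the same route as the paper: counting ordered isotropic $r$-tuples by successively choosing $v_{k+1}\in V_k^\perp\setminus V_k$ and then dividing by the number of ordered bases of an $r$-dimensional space. The only difference is that you also give a self-contained argument for the maximality claim (via $U\subseteq U^\perp$ and extending by any $u\in U^\perp\setminus U$), which the paper merely cites as well known, and you carry out the final algebraic manipulation explicitly rather than leaving it as a sketch.
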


\begin{proof}

The first claim of the proposition is a well known fact (see for example \cite[Chapter 1]{Ro}).  The second claim is also a known fact; we sketch a proof here.  The number of ordered bases for any $k$-dimensional subspace of $\F_q^{2n}$ is
\[
\prod_{j=0}^{k-1}(q^k-q^j).
\]

On the other hand, we can produce an ordered basis for a $k$-dimensional totally isotropic subspace of $\F_q^{2n}$ in $k$ steps, at each step $i$ choosing $v_i \in \langle v_1,\ldots,v_{i-1}\rangle^\perp \setminus \langle v_1,\ldots,v_{i-1} \rangle$.  The number of ways to do this is
\[
\prod_{j=0}^{k-1}(q^{2n-j}-q^j),
\]
and the proof is completed by division and manipulation.
\end{proof}

It was shown by  Solomon  \cite{So} that $PCP_n(q)$ is Cohen-Macaulay.  Hence  so is  ${PCP_n(q)^-*C_n}$.  We will show that the dimension of $\tilde H_{n-1} ({PCP_n(q)^-*C_n})$ is a polynomial in $q$ with nonnegative integral coefficients and give a combinatorial interpretation of the coefficients. 
 We first need the following q-analog of Corollary~\ref{boopo}.
We  say that a poset $P$ with $\hat 0_P$ is $q$-simplicial if each interval $[\hat 0_P,x]$ is isomorphic to $B_j(q)$ for some $j$.  

\begin{cor}[of Theorem~\ref{bncnq}]  \label{qboopo} Let $P$ be a ranked $q$-simplicial poset of length $n$.  Then $$
\mu(\wh{P^- \ast C_n})=\sum_{r=0}^{n}(-1)^{r-1}W_r(P)\sum_{\s \in \S_r} q^{\comaj(\s)+\exc(\s)}.
$$
\end{cor}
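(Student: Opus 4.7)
The plan is to mimic the sketch provided for Corollary~\ref{boopo}: apply the recursive definition of the M\"obius function to $\wh{P^- \ast C_n}$, identify each open interval below a generic element using the $q$-simplicial hypothesis, and invoke Theorem~\ref{bncnq}.

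First, for each $x \in P^-$ with $k := r_P(x)$, the $q$-simplicial assumption gives $[\hat 0_P, x] \cong B_k(q)$. A routine check of the Rees-product order relations shows that this isomorphism induces a poset isomorphism between the open interval $(\hat 0,(x,j))$ in $\wh{P^- \ast C_n}$ and the open principal lower order ideal $I_j(B_k(q))$ of $B_k(q)^- \ast C_k$. The range of valid $j$ is $\{0,1,\dots,k-1\}$: indeed the rank function on $P^-$ places the atoms of $P$ at rank $0$, so the Rees condition in $P^- \ast C_n$ becomes $j \le r_P(x)-1$. Consequently
\[
\mu_{\wh{P^- \ast C_n}}(\hat 0,(x,j)) = \mu\bigl(\wh{I_j(B_k(q))}\bigr),
\]
and since $I_j(B_k(q))$ inherits the Cohen-Macaulay property from $B_k(q)^- \ast C_k$, combining equation (\ref{eupon}) with Theorem~\ref{bncnq} yields
\[
\mu\bigl(\wh{I_j(B_k(q))}\bigr) = (-1)^{k}\, q^{\binom{k}{2}+j}\, a^{\maj,\exc}_{k,j}(q^{-1}).
\]

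Second, I would apply the M\"obius recurrence to $\wh{P^- \ast C_n}$ and organize the resulting sum by $r := r_P(x)$. For fixed $r$, there are $W_r(P)$ choices of $x$, and the inner sum over $j = 0,1,\dots,r-1$ collapses, using $\comaj(\sigma) = \binom{r}{2}-\maj(\sigma)$ and $\exc(\sigma) = j$, to
\[
\sum_{j=0}^{r-1} q^{\binom{r}{2}+j} a^{\maj,\exc}_{r,j}(q^{-1}) = \sum_{\sigma \in \sg_r} q^{\comaj(\sigma)+\exc(\sigma)}.
\]
The $r = 0$ term in the target formula then absorbs the $\mu(\hat 0,\hat 0) = 1$ contribution, since $W_0(P) = 1$ and $\sum_{\sigma \in \sg_0} q^{\comaj(\sigma)+\exc(\sigma)} = 1$, so that $(-1)^{-1} W_0(P) \cdot 1 = -1$ is exactly what the recurrence demands.

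The only step requiring genuine care is the interval identification $(\hat 0,(x,j)) \cong I_j(B_k(q))$; in particular, one must verify both the precise range of $j$ (governed by the ``from-scratch'' rank on the truncation $P^-$) and that the isomorphism $[\hat 0_P,x] \cong B_k(q)$ preserves the rank data entering the Rees condition. Once this is nailed down, the remainder of the argument is a short unwinding of the M\"obius recursion, exactly paralleling the passage from Theorem~\ref{bncn} to Corollary~\ref{boopo}.
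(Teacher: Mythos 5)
Your proposal is correct and follows exactly the route the paper intends: the paper states Corollary~\ref{qboopo} (like its classical counterpart Corollary~\ref{boopo}) as an ``immediate'' consequence of the M\"obius recursion and Theorem~\ref{bncnq}, and your argument is a careful unwinding of precisely that. In particular, you correctly identify the interval $(\hat 0,(x,j))$ with $I_j(B_{r_P(x)}(q))$, use Cohen--Macaulayness together with equation~(\ref{eupon}) to convert the dimension formula of Theorem~\ref{bncnq} into a M\"obius value, and verify the $j$-sum telescopes to $\sum_{\sigma\in\sg_r}q^{\comaj(\sigma)+\exc(\sigma)}$ via $\comaj(\sigma)=\binom r2-\maj(\sigma)$ and $\exc(\sigma)=j$; the handling of the $r=0$ boundary term is also right.
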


\begin{thm}\label{qbth} For all $n\ge 0$, let  $d_n(q) := \sum_{\sigma \in \mathcal D_n} q^{\comaj(\sigma)+\exc(\sigma)}$.  Then \begin{equation} \label{qbeq}  \dim \tilde H_{n-1} ({PCP_n(q)^-*C_n}) = \sum_{k =0}^n \left[\begin{array}{c} n \\k\end{array}\right]_q \,\,q^{k^2}\,\,\prod_{i=k+1}^n (1+q^i) \,\, d_{n-k}(q). \end{equation}  Consequently,
$\dim \tilde H_{n-1} ({PCP_n(q)^-*C_n})$ is a polynomial in $q$ with nonnegative integer coefficients.
\end{thm}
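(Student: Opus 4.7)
The plan is to deduce (\ref{qbeq}) from Corollary~\ref{qboopo}, Proposition~\ref{wpcpq}, and Corollary~\ref{coexcderang}, reducing everything to a single $q$-binomial identity that can be handled with the $q$-binomial theorem. First I would observe that $PCP_n(q)$ is $q$-simplicial: since every subspace of a totally isotropic subspace is itself totally isotropic, the interval $[\hat 0,U]$ in $PCP_n(q)$ coincides with the full subspace lattice $B_j(q)$ whenever $\dim U = j$. Combining Cohen--Macaulayness of $PCP_n(q)^-\ast C_n$ (Solomon) with Corollary~\ref{qboopo} and Proposition~\ref{wpcpq} then gives
\[
\dim \tilde H_{n-1}(PCP_n(q)^-\ast C_n) = \sum_{r=0}^n (-1)^{n-r} \left[\begin{array}{c}n\\r\end{array}\right]_q \prod_{i=n-r+1}^n(1+q^i)\; A_r^{\comaj,\exc}(q,q),
\]
where $A_r^{\comaj,\exc}(q,q)=\sum_{\sigma\in\S_r} q^{\comaj(\sigma)+\exc(\sigma)}$.

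Next I would use the first identity in Corollary~\ref{coexcderang}, evaluated at $t=q$, to write
\[
A_r^{\comaj,\exc}(q,q) \;=\; \sum_{j=0}^r q^{\binom{j}{2}} \left[\begin{array}{c}r\\j\end{array}\right]_q d_{r-j}(q).
\]
Substituting this and changing the order of summation with $k=r-j$ so that $d_k(q)$ is grouped together, then applying the identity $\left[\begin{array}{c}n\\r\end{array}\right]_q\left[\begin{array}{c}r\\k\end{array}\right]_q = \left[\begin{array}{c}n\\k\end{array}\right]_q\left[\begin{array}{c}n-k\\r-k\end{array}\right]_q$ and setting $s=r-k$, $N=n-k$, the whole theorem reduces to the purely algebraic identity
\[
\sum_{s=0}^N (-1)^{N-s} \left[\begin{array}{c}N\\s\end{array}\right]_q q^{\binom{s}{2}} \prod_{i=N-s+1}^N(1+q^i) \;=\; q^{N^2} \qquad (N\ge 0),
\]
after factoring the common $\prod_{i=N+1}^n(1+q^i)$ from both sides term-by-term.

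For this $q$-identity (which is the technical heart of the argument), I would apply the $q$-binomial theorem to expand $\prod_{i=N-s+1}^N(1+q^i) = \sum_{k=0}^s \left[\begin{array}{c}s\\k\end{array}\right]_q q^{\binom{k}{2}+(N-s+1)k}$, swap the order of summation, collect the binomial products via $\left[\begin{array}{c}N\\s\end{array}\right]_q\left[\begin{array}{c}s\\k\end{array}\right]_q = \left[\begin{array}{c}N\\k\end{array}\right]_q\left[\begin{array}{c}N-k\\s-k\end{array}\right]_q$, and carefully collect the $q$-exponents. The resulting inner sum over the free index then reduces to the vanishing alternating sum $\sum_{l=0}^M (-1)^l\left[\begin{array}{c}M\\l\end{array}\right]_q q^{\binom{l}{2}} = \delta_{M,0}$, which is the specialization $x=-1$ of the $q$-binomial theorem $\prod_{j=0}^{M-1}(1+xq^j)=\sum_l \left[\begin{array}{c}M\\l\end{array}\right]_q q^{\binom{l}{2}}x^l$. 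Only the $k=N$ summand survives, yielding exactly $q^{N^2}$ after combining exponents.

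The main obstacle is the bookkeeping of $q$-powers in the identity: keeping $\binom{s}{2}$, $\binom{k}{2}$, $(N-s+1)k$, and the collapsed contribution $2\binom{N}{2}+N=N^2$ straight requires care, but no deep new ingredient. Finally, the nonnegativity of the coefficients of $\dim\tilde H_{n-1}(PCP_n(q)^-\ast C_n)$ as a polynomial in $q$ is immediate from the right-hand side of (\ref{qbeq}): the Gaussian binomials, the products $\prod_{i=k+1}^n(1+q^i)$, the powers $q^{k^2}$, and $d_{n-k}(q)=\sum_{\sigma\in\mathcal D_{n-k}} q^{\comaj(\sigma)+\exc(\sigma)}$ are each polynomials in $q$ with nonnegative integer coefficients.
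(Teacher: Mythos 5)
Your proof is correct and follows essentially the same strategy as the paper's: compute the dimension via Corollary~\ref{qboopo} and Proposition~\ref{wpcpq} using Cohen--Macaulayness, pass between $A^{\comaj,\exc}_r(q,q)$ and $d_r(q)$ via Corollary~\ref{coexcderang}, and reduce to a $q$-binomial identity proved with the $q$-binomial theorem. The only cosmetic differences are that you apply the first identity of Corollary~\ref{coexcderang} in the forward direction where the paper expands $d_{n-k}(q)$ via the second identity and then uses Gaussian inversion, and you verify the resulting $q$-identity by expanding $\prod_{i=N-s+1}^N(1+q^i)$ and collapsing an alternating sum rather than by specializing $x=q^n$ in a Gaussian-inverted $q$-binomial formula; both routes reduce to the same identity, since your $q$-identity is the paper's~(\ref{neweq}) after the substitution $s\mapsto N-s$.
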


\begin{proof}  We have by Proposition~\ref{wpcpq}, Corollary~\ref{qboopo}, and the fact that   ${PCP_n(q)^-*C_n}$ is Cohen-Macaulay, $$\dim \tilde H_{n-1} ({PCP_n(q)^-*C_n}) = \sum_{j=0}^n (-1)^j \left[\begin{array}{c} n \\j\end{array}\right]_q\,\, \prod_{i=j+1}^n (1+q^i)\,\, a_{n-j}(q),$$ where $a_n(q):= \sum_{\sigma \in \mathfrak S_n} q^{\comaj(\sigma)+\exc(\sigma)}$.
On the other hand by Corollary~\ref{coexcderang}, the right hand side of
(\ref{qbeq}) equals
$$\sum_{k =0}^n \left[\begin{array}{c} n \\k\end{array}\right]_q \,\,q^{k^2}\,\,\prod_{i=k+1}^n (1+q^i)  \sum_{m=0}^{n-k} (-1)^m \left[\begin{array}{c} n-k \\m\end{array}\right]_q a_{n-k-m}(q)$$

\begin{eqnarray*} &=&  \sum_{j\ge 0} a_{n-j}(q) \sum_{k \ge 0} \left[\begin{array}{c} n \\k\end{array}\right]_q
 \,\,q^{k^2}\,\,\prod_{i=k+1}^n (1+q^i) (-1)^{j-k}  \left[\begin{array}{c} n-k \\j-k\end{array}\right]_q \\ &=&  \sum_{j\ge 0} a_{n-j}(q)  \left[\begin{array}{c} n \\j\end{array}\right]_q\sum_{k \ge 0} \left[\begin{array}{c} j \\k\end{array}\right]_q
 \,\,q^{k^2}\,\,\prod_{i=k+1}^n (1+q^i) (-1)^{j-k} .\end{eqnarray*}
 Thus to prove (\ref{qbeq}) we need only show that
 $$ \prod_{i=j+1}^n (1+q^i) = \sum_{k \ge 0} \left[\begin{array}{c} j \\k\end{array}\right]_q
 \,\,q^{k^2}\,\,\prod_{i=k+1}^n (1+q^i) (-1)^{k} ,$$ holds for all $n$ and $j$.
 By Gaussian inversion this is equivalent to,
 $$q^{j^2}(-1)^j \prod_{i=j+1}^n(1+q^i) = \sum_{k \ge 0} \left[\begin{array}{c} j \\k\end{array}\right]_q (-1)^{j-k} q^{j-k \choose 2} 
\prod_{i=k+1}^n (1+q^i) ,$$
which is in turn equivalent to,
\begin{equation} \label{neweq} q^{j^2}(-1)^j  = \sum_{k \ge 0} \left[\begin{array}{c} j \\k\end{array}\right]_q (-1)^{j-k} q^{j-k \choose 2} 
\prod_{i=k+1}^j (1+q^i)  .\end{equation}

To prove (\ref{neweq}) we use the q-binomial formula,
$$\prod_{i=0}^{n-1} (x+yq^i) = \sum_{k\ge 0} \left[\begin{array}{c} n \\k\end{array}\right]_q q^{k\choose 2} x^{n-k}y^k.$$  Set $y=1$ and use Gaussian inversion to obtain
$$x^n = \sum_{k\ge 0} \left[\begin{array}{c} n \\k\end{array}\right]_q (-1)^{n-k} \prod_{i=0}^{k-1} (x+q^i)$$
Now set $x=q^n$ to obtain
\begin{eqnarray*}q^{n^2} &=& \sum_{k\ge 0} \left[\begin{array}{c} n \\k\end{array}\right]_q (-1)^{n-k} \prod_{i=0}^{k-1} (q^n+q^i)\\ 
&=&  \sum_{k\ge 0} \left[\begin{array}{c} n \\k\end{array}\right]_q (-1)^{n-k} q^{k\choose 2}\prod_{i=0}^{k-1} (q^{n-i}+1).\end{eqnarray*}
\end{proof}

Using the standard identification of  elements of $\mathfrak S_n[\Z_2]$ with barred permutations (i.e., permutations  written in one line notation with some subset  of the letters  barred), the derangements of Theorem~\ref{typeb} are the barred permutations $\s=\s_1\cdots \s_n$ for which $\s_i \ne i$ for all $i \in [n]$.  Let $\mathcal D^{BC}_n$ be the set of such barred permutations.  If $\s$ is a barred permutation, let $|\s|$ be the ordinary permutation obtained by removing the bars from $\s$.
 For $\sigma \in \ \mathcal D_n^{BC}$, let $\tilde \sigma$ be the word obtained by rearranging the letters of $\sigma$ so that the fixed points of $|\sigma|$, which are all barred in $\sigma$, come first in increasing order with bars intact, followed by subword of nonfixed points  of $|\sigma|$ also with bars intact.  Now let $S$ be the set of positions in which bars appear in $\tilde\sigma$.  Define the {\em bar index}, $\bnd(\s)$ of $\sigma$ to be $ \sum_{i\in S} i$.  For example if $\s =\bar 3 \bar 2 5 \bar 4 \bar 6 1 \bar 7$ then $\tilde \s = \bar 2 \bar 4 \bar 7 \bar 3 5 \bar 6 1$ and so $\bnd (\s) = 1+2+3+4+6$.

\begin{cor} $$ \dim \tilde H_{n-1} ({PCP_n(q)^-*C_n}) = \sum_{\sigma \in \mathcal D_n^{BC}} q^{\comaj(|\sigma|) + \exc(|\sigma|) + \bnd(\sigma)}$$
\end{cor}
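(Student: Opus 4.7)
My plan is to combine Theorem~\ref{qbth} with Corollary~\ref{coexcderang} by partitioning $\mathcal D_n^{BC}$ according to $k := \fix(|\sigma|)$ and showing that the generating function on the right-hand side of the claimed identity factors term-by-term into the three factors appearing in (\ref{qbeq}).

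\medskip

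\noindent\emph{Step 1 (structure of $\sigma \in \mathcal D_n^{BC}$).}  Given $\sigma \in \mathcal D_n^{BC}$ with $k = \fix(|\sigma|)$, the datum of $\sigma$ splits into (a) the underlying permutation $\pi := |\sigma| \in \mathfrak S_n$, which satisfies $\fix(\pi) = k$, together with (b) a subset of the $n-k$ non-fixed positions indicating which non-fixed letters are barred in $\sigma$.  The $k$ fixed positions of $\pi$ are forced to be barred in $\sigma$ (since $\sigma_i \ne i$), whereas the non-fixed positions may be freely barred or unbarred because $\overline{\pi(i)} \ne i$ automatically.

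\medskip

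\noindent\emph{Step 2 (evaluating $\bnd$).}  By definition of $\tilde\sigma$, the first $k$ positions of $\tilde\sigma$ are all barred, and the last $n-k$ positions of $\tilde\sigma$ carry whichever bars are on the non-fixed entries of $\sigma$ (in their original relative order).  Let $T \subseteq [n-k]$ record which of those last $n-k$ positions of $\tilde\sigma$ are barred; then
\[
\bnd(\sigma) \;=\; \sum_{i=1}^k i \;+\; \sum_{j \in T}(k+j) \;=\; \binom{k+1}{2} + \sum_{j \in T}(k+j).
\]
As $\sigma$ ranges over preimages of a fixed $\pi$ with $\fix(\pi)=k$, the subset $T$ ranges independently over all subsets of $[n-k]$, so
\[
\sum_{\sigma:\,|\sigma|=\pi} q^{\bnd(\sigma)} \;=\; q^{\binom{k+1}{2}} \prod_{j=1}^{n-k}\bigl(1+q^{k+j}\bigr) \;=\; q^{\binom{k+1}{2}} \prod_{i=k+1}^{n}(1+q^i).
\]

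\medskip

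\noindent\emph{Step 3 (summing over $\pi$).}  Setting $t=q$ in Corollary~\ref{coexcderang} gives
\[
\sum_{\substack{\pi \in \mathfrak S_n \\ \fix(\pi)=k}} q^{\comaj(\pi)+\exc(\pi)} \;=\; q^{\binom{k}{2}} \left[\begin{array}{c} n \\ k\end{array}\right]_q d_{n-k}(q).
\]

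\medskip

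\noindent\emph{Step 4 (assembly).}  Since $\bnd(\sigma)$ depends on $(k,T)$ but not on the choice of $\pi$ with $\fix(\pi)=k$, the sum factors, and using $\binom{k}{2}+\binom{k+1}{2} = k^2$,
\[
\sum_{\sigma \in \mathcal D_n^{BC}} q^{\comaj(|\sigma|)+\exc(|\sigma|)+\bnd(\sigma)} \;=\; \sum_{k=0}^n q^{k^2}\left[\begin{array}{c} n \\ k\end{array}\right]_q \prod_{i=k+1}^{n}(1+q^i)\, d_{n-k}(q),
\]
which equals $\dim \tilde H_{n-1}(PCP_n(q)^- \ast C_n)$ by Theorem~\ref{qbth}.

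\medskip

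The only delicate point is Step~2: one must carefully parse the definition of $\tilde\sigma$ and verify that barring the non-fixed positions of $\sigma$ is in bijective, weight-respecting correspondence with choosing an arbitrary subset $T \subseteq [n-k]$.  Once that bookkeeping is in place, the rest is a direct factorization plus an application of the two cited results.
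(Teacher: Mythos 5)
Your proof is correct and takes essentially the same approach as the paper: partition $\mathcal D_n^{BC}$ by $k=\fix(|\sigma|)$, observe that the bar data on non-fixed positions contributes $q^{\binom{k+1}{2}}\prod_{i=k+1}^n(1+q^i)$ independently of the underlying permutation, apply the first part of Corollary~\ref{coexcderang} with $t=q$, and combine the powers $\binom{k}{2}+\binom{k+1}{2}=k^2$ to match Theorem~\ref{qbth}. The paper's only presentational difference is that it tracks $\bnd$ with a separate variable $p$ and sets $p=q$ at the end.
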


\begin{proof}   By Corollary~\ref{coexcderang} we have,
$$ \sum_{\sigma \in \mathcal D_n^{BC}} q^{\comaj(|\sigma|) + \exc(|\sigma|)} p^{ \bnd(\sigma)}\hspace{2in}$$
\begin{eqnarray*} &=& \sum_{k=0}^n  \sum_{\scriptsize \begin{array}{c} \s \in \mathfrak S_n \\ \fix(\s) = k\end{array}} q^{\comaj(\s)+\exc(\s)} p^{k+1 \choose 2} \prod_{i=k+1}^n (1+p^i) \\&=& \sum_{k=0}^n  \left[\begin{array}{c} n \\k\end{array}\right]_q q^{k\choose 2} d_{n-k}(q) p^{k+1 \choose 2} \prod_{i=k+1}^n (1+p^i).\end{eqnarray*}
Now set $p=q$ and apply Theorem~\ref{qbth}.
\end{proof}

\section*{Acknowledgments}
The research presented here began while both authors were visiting the Mittag-Leffler Institute as participants in a combinatorics program organized by Anders Bj\"orner and Richard Stanley.  We thank the Institute for its hospitality and support.  We are also grateful to Ira Gessel and Richard Stanley for some  useful discussions and 
references.

\end{document}